\title{Dp-finite fields II: the canonical topology and its relation to
  henselianity}
\author{Will Johnson}
\DeclareMathOperator*{\forkindep}{\raise0.2ex\hbox{\ooalign{\hidewidth$\vert$\hidewidth\cr\raise-0.9ex\hbox{$\smile$}}}}
\newcommand{\Gal}{\operatorname{Gal}}
\newcommand{\Frac}{\operatorname{Frac}}
\newcommand{\characteristic}{\operatorname{char}}
\newcommand{\Tr}{\operatorname{Tr}}
\newcommand{\res}{\operatorname{res}}
\newcommand{\Aut}{\operatorname{Aut}}
\newcommand{\dcl}{\operatorname{dcl}}
\newcommand{\tp}{\operatorname{tp}}
\newcommand{\val}{\operatorname{val}}
\newcommand{\dpr}{\operatorname{dp-rk}}
\newcommand{\redrk}{\operatorname{rk}_0}
\newcommand{\mininf}{-_\infty}
\newtheorem{theorem}{Theorem}[section] 
\newtheorem{lemma}[theorem]{Lemma}
\newtheorem{corollary}[theorem]{Corollary}
\newtheorem{conjecture}[theorem]{Conjecture}
\newtheorem{proposition}[theorem]{Proposition}
\newtheorem*{theorem-star}{Theorem}
\newtheorem*{conjecture-star}{Conjecture}
\theoremstyle{definition}
\newtheorem{definition}[theorem]{Definition}
\newtheorem{example}[theorem]{Example}
\theoremstyle{remark}
\newtheorem{remark}[theorem]{Remark}
\newtheorem{claim}[theorem]{Claim}
\newtheorem*{acknowledgment}{Acknowledgments}
\newcommand{\Rr}{\mathbb{R}}
\newcommand{\Zz}{\mathbb{Z}}
\newcommand{\Kk}{\mathbb{K}}
\newcommand{\Mm}{\mathbb{M}}
\newcommand{\Ll}{\mathbb{L}}
\newcommand{\Oo}{\mathcal{O}}
\newcommand{\mm}{\mathfrak{m}}
\newenvironment{claimproof}[1][\proofname]
               {
                 \proof[#1]
                 
               }
               {
                 \endproof
               }
\begin{document}
\maketitle

\begin{abstract}
  We continue the investigation of dp-finite fields from \cite{prdf}.
  We show that the ``heavy sets'' of \cite{prdf} are exactly the sets
  of full dp-rank.  As a consequence, full dp-rank is a definable property
  in definable families of sets.  If $I$ is the group of
  infinitesimals, we show that $1 + I$ is the group of multiplicative
  infinitesimals.  From this, we deduce that the canonical topology is
  a field topology.  Lastly, we consider the (unlikely) conjecture
  that the canonical topology is a V-topology.  Assuming this
  conjecture, we deduce the expected classification of dp-finite
  fields \cite{halevi-hasson-jahnke}.
\end{abstract}

\section{Introduction}\label{sec:1}
This paper continues the investigation in \cite{prdf} of fields of
finite dp-rank, also known as \emph{dp-finite fields}.  For background
on dp-rank, see \cite{dp-add}.  In \cite{arxiv-myself}, dp-minimal
fields were classified up to elementary equivalence in the language of
rings.  One hopes to generalize the proof to dp-finite fields, or
even strongly dependent or NIP fields.

By work of Halevi, Hasson, Jahnke, Koenigsmann, Sinclair, and others
(\cite{halevi-hasson-jahnke,hhj-v-top,JK,sinclair}), the
classification of dp-finite fields is known, modulo the following
conjectures.  See \cite{halevi-hasson-jahnke} for details.
\begin{conjecture-star}[Shelah conjecture for dp-finite fields]
  Let $K$ be a dp-finite field.  Then one of the following holds:
  \begin{itemize}
  \item $K$ is finite
  \item $K$ is algebraically closed
  \item $K$ is real closed
  \item $K$ admits a non-trivial henselian valuation
  \end{itemize}
\end{conjecture-star}
\begin{conjecture-star}[henselianity conjecture for dp-finite fields]
  Let $(K,v)$ be a dp-finite valued field.  Then $v$ is henselian.
\end{conjecture-star}
These conjectures were proven for $K$ of positive characteristic in
\cite{prdf} (Corollary~11.4 and Theorem~2.8).  In the present paper,
we outline a strategy for handling characteristic 0, based on the
original strategy for dp-minimal fields \cite{arxiv-myself}.

\begin{remark}
  The Shelah conjecture in fact implies the henselianity conjecture
  \cite{hhj-v-top}.  But the known methods for attacking the Shelah
  conjecture go in the other direction: one produces a valuation,
  proves the henselianity conjecture, and deduces the Shelah
  conjecture as a consequence.
\end{remark}

\subsection{New results on the canonical topology}
Let $K$ be an unstable dp-finite field.  In \cite{prdf},
Definition~4.19, we defined a notion of \emph{heaviness} for definable
subsets $X \subseteq K$, and used this to define a ``canonical
topology'' on $K$.  The canonical topology is a Hausdorff non-discrete
ring topology characterized by the fact that the following family of
sets is a neighborhood basis of 0:
\begin{equation*}
  \{X - X : X \subseteq K \text{ definable and heavy}\}.
\end{equation*}
We prove a couple new results which help clarify this picture.
\begin{theorem-star}
  A definable subset $X \subseteq K$ is heavy if and only if $\dpr(X)
  = \dpr(K)$.
\end{theorem-star}
As a corollary, the condition ``$X$ has full rank'' varies definably
with $X$.
\begin{theorem-star}
  The canonical topology on $K$ is a field topology (division is
  continuous).
\end{theorem-star}
This result can be expressed in terms of the infinitesimals.  Let $\Kk
\succeq K$ be saturated, and let $I_K$ be the set of
$K$-infinitesimals:
\begin{equation*}
  I_K = \bigcap \{X - X : X \subseteq \Kk \text{ heavy and $K$-definable}\}
\end{equation*}
The fact that division is continuous is equivalent to the statement
that $1 + I_K$ is a subgroup of $\Kk^\times$.  In fact, we prove
something stronger:
\begin{equation*}
  1 + I_K = \bigcap \{X \cdot X^{-1} : X \subseteq
  \Kk^\times \text{ heavy and $K$-definable}\}
\end{equation*}
Thus $1 + I_K$ is the set of ``multiplicative $K$-infinitesimals,''
which is a subgroup of $\Kk^\times$ for the same reason that $I_K$ is
a subgroup of $\Kk$.

\subsection{Valuation-type fields}
Recall the notion of V-topology from \cite{PZ}.  Say that a
dp-finite field $K$ has \emph{valuation type} if $K$ is unstable and
the canonical topology on $K$ is a V-topology.  Equivalently, the
infinitesimals $I_K$ are the maximal ideal of a valuation ring on
$\Kk$.

This property turns out to be closely related to the Shelah
conjecture.
\begin{theorem-star}
  Let $\Kk$ be a dp-finite field, assumed to be sufficiently
  saturated.  Suppose that for every definable valuation $v$ on $\Kk$,
  the residue field $\Kk v$ is stable or of valuation type.  Then
  \begin{enumerate}
  \item $\Kk$ satisfies the Shelah conjecture.  One of the following
    holds:
    \begin{itemize}
    \item $\Kk$ is finite
    \item $\Kk$ is algebraically closed
    \item $\Kk$ admits a henselian valuation.
    \end{itemize}
  \item Every definable valuation ring on $\Kk$ is henselian.
  \item More generally, every $\vee$-definable valuation ring on $\Kk$
    is henselian.
  \end{enumerate}
\end{theorem-star}
The third point is the heart of the matter, since a $\vee$-definable
valuation ring is given by the assumptions.

We also give a criterion for $K$ to have valuation type.  Say that a
ring $R$ on a field $K$ is a \emph{multi-valuation ring} if it is a
finite intersection of valuation rings on $K$.
\begin{theorem-star}
  Let $\Kk$ be a monster unstable dp-finite field.  The following are
  equivalent:
  \begin{itemize}
  \item $\Kk$ has valuation type.
  \item For every small submodel $K \preceq \Kk$, the infinitesimals
    $I_K$ are the maximal ideal of a valuation ring $\Kk$.
  \item For some small submodel $K \preceq \Kk$, the infinitesimals
    $I_K$ contain a nonzero ideal of a multi-valuation ring on $\Kk$.
  \end{itemize}
\end{theorem-star}
Taken together, these facts suggest that we should attack the Shelah
conjecture and the classification problem by proving the following
conjecture.
\begin{conjecture-star}
  Let $\Kk$ be a monster model of an unstable dp-finite field.  Then
  there is a small submodel $K \preceq \Kk$ for which $I_K$ contains a
  nonzero multi-valuation ideal.
\end{conjecture-star}
This ``multi-valuation strategy'' is the natural generalization of the
method used in \cite{arxiv-myself} to classify dp-minimal fields.  We
discuss the prospects of this strategy in \S\ref{retreat}.
Unfortunately, there is a major problem.

\subsection{Miscellaneous results}
We prove several miscellaneous results concerning dp-finite fields.
\begin{enumerate}
\item The equicharacteristic 0 henselianity conjecture (for NIP
  fields) implies the henselianity conjecture (for NIP fields).  This
  was obvious to experts, but worth pointing out.
\item Sinclair's ``Johnson topology,'' defined in \cite{sinclair} \S
  3.2, \emph{exists}, and is the same as the canonical topology of
  \cite{prdf}.
\item We give another proof that stable fields of finite dp-rank have
  finite Morley rank and are therefore algebraically closed.  This was
  previously proven by Halevi and Palac\'in (\cite{Palacin},
  Proposition 7.2).
\end{enumerate}

\subsection{Notation and conventions}
Unlike \cite{prdf}, the monster will now be $\Kk$.  We will always
assume that $\Kk$ is a field, and never assume that $\Kk$ is a pure
field.

We will sometimes abuse terminology and use ``saturated'' as shorthand
for ``sufficiently saturated and sufficiently strongly homogeneous,''
rather than literally meaning ``saturated in the size of the model.''

\section{Two easy results}\label{sec:2-remarks}
We deduce two easy consequences of \cite{prdf}:
\begin{itemize}
\item Stable fields of finite dp-rank have finite Morley rank.
\item The henselianity conjecture for NIP fields reduces to the case
  of equicharacteristic 0.
\end{itemize}

\subsection{Stable fields of finite dp-rank} \label{sec:stable-case}
Recall that for $A$ a small subset of a monster model $\Mm$, a set is
``$A$-invariant'' if it is $\Aut(\Mm/A)$-invariant.  This is a weaker
notion than being definable, type-definable, or $\vee$-definable over
$A$.
\begin{lemma} \label{stable-valuation-rings}
  Let $\Kk$ be a sufficiently saturated stable field and $A \subseteq
  \Kk$ be small.  There are no non-trivial $A$-invariant valuation
  rings $\Oo$ on $\Kk$.
\end{lemma}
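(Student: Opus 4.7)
The plan is to derive a contradiction by exhibiting an asymmetric $A$-invariant binary relation whose asymmetry survives the passage to an $A$-indiscernible sequence, and then invoking the stability-theoretic characterization that every such sequence is totally indiscernible.

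Suppose toward contradiction that $\Oo$ is a nontrivial $A$-invariant valuation ring with maximal ideal $\mm \neq (0)$, and fix $a \in \mm \setminus \{0\}$. Since $v(a) > 0$, the powers $a^n$ have strictly increasing valuations. Consider the relation
\[ R(x,y) \;:=\; \bigl(y \neq 0 \text{ and } x/y \in \mm\bigr). \]
Because $\mm$ is $A$-invariant, so is $R$: by the strong homogeneity of $\Kk$, if $(x',y') \equiv_A (x,y)$ then some $\sigma \in \Aut(\Kk/A)$ carries one pair to the other, and $\sigma$ preserves $\mm$ setwise, so $x/y \in \mm \iff \sigma(x/y) = x'/y' \in \mm$. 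Thus $R$ is a union of complete 2-types over $A$, even though it need not be a formula. On the specific sequence $(a^n)_{n<\omega}$ one has $R(a^i,a^j) \iff i > j$.

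By the standard Ramsey-plus-compactness extraction, there exists an $A$-indiscernible sequence $(b_n)_{n<\omega}$ whose Ehrenfeucht--Mostowski type over $A$ is witnessed by increasing sub-tuples of $(a^n)$: for every $i < j$ there exist $m < m'$ with $\tp(b_i, b_j / A) = \tp(a^{m}, a^{m'} / A)$. Since $R$ depends only on the 2-type over $A$, we conclude $R(b_j, b_i)$ holds while $R(b_i, b_j)$ fails for all $i < j$. Since $\Kk$ is stable, however, $(b_n)$ is totally $A$-indiscernible, so $\tp(b_1, b_2 / A) = \tp(b_2, b_1 / A)$, forcing $R(b_1, b_2) \Leftrightarrow R(b_2, b_1)$; contradiction.

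The only subtle point is the observation that $A$-invariance of $\Oo$ is already enough structure for $R$ to ``behave like a formula'' on 2-types over $A$; with that in hand, the Ramsey extraction together with total indiscernibility of indiscernible sequences in stable theories does the rest. No nontrivial structural fact about valuation rings is used beyond $x/y \in \mm \iff v(x) > v(y)$, and the argument will go through identically for any $A$-invariant, nontrivial, proper valuation subring of a stable field.
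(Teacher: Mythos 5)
Your overall idea---playing the symmetry of $2$-types in a stable theory against the asymmetry of the $\Aut(\Kk/A)$-invariant relation $R(x,y)$, namely $x/y \in \mm$---is in the same spirit as the paper's proof, and your final step (indiscernible sequences in stable theories are totally indiscernible) is fine. The gap is in the extraction step. The standard Ramsey-plus-compactness lemma, applied to the $\omega$-sequence $(a^n)_{n<\omega}$, produces an $A$-indiscernible sequence $(b_n)$ realizing the Ehrenfeucht--Mostowski type of $(a^n)$ over $A$ only \emph{formula by formula}: every formula over $A$ true of an increasing tuple of $b$'s is true of some increasing tuple of $a$'s. It does \emph{not} give what you assert, namely that the \emph{complete} type $\tp(b_i,b_j/A)$ is realized by a single increasing pair $(a^m,a^{m'})$. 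Since $R$ is merely invariant---not definable, and not even type-definable over $A$---the formula-level transfer says nothing about whether $R(b_j,b_i)$ holds on the extracted sequence; nothing in the first-order EM-type over $A$ need remember the valuation at all, and the $b_i$ could perfectly well all have equal valuation, which is consistent with total indiscernibility and yields no contradiction. To transfer complete types you would need the Erd\H{o}s--Rado version of extraction, which requires an input sequence of length at least $\beth_{(2^{|T|+|A|})^+}$; the powers of $a$ only give an $\omega$-sequence with strictly increasing valuations, and because $v$ is not definable, saturation does not let you extend it (the condition ``$v(x)>v(a^n)$ for all $n$'' is a countable intersection of invariant sets, not a partial type).

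The paper obtains the needed complete-type symmetry from stability itself rather than from Ramsey: it takes the unique generic type $p$ of the infinite stable field (a global $\emptyset$-invariant type) and uses symmetry of forking to get that $(a,b)\models p^{\otimes 2}|A$ implies $(b,a)\models p^{\otimes 2}|A$, hence $ab\equiv_A ba$, which is exactly the statement that the invariant relation ``$v(x)>v(y)$'' cannot distinguish $(a,b)$ from $(b,a)$; the contradiction is then reached by noting that $a$ and $a\cdot\varepsilon$ both realize $p|A$ for $\varepsilon$ of positive valuation. If you want to salvage your argument, replace the extracted sequence by a Morley sequence of $p$ over $A$, all of whose increasing pairs realize the single complete type $p^{\otimes 2}|A$ by construction---but at that point you have essentially reproduced the paper's proof.
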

\begin{proof}
  Suppose such an $\Oo$ exists, and let $\val(-)$ denote the
  associated valuation.

  As $\Kk$ is an infinite stable field, there is a global
  $\emptyset$-invariant type $p$ which is the unique generic of both
  the additive group $\Kk$ and the multiplicative group $\Kk^\times$.
  \begin{claim}
    If $(a,b) \models p^{\otimes 2}|A$, then $\val(a) = \val(b)$.
  \end{claim}
  \begin{claimproof}
    By forking symmetry, $(b,a) \models p^{\otimes 2}|A$, so $ab
    \equiv_A ba$.  By $A$-invariance of $\Oo$,
    \begin{equation*}
      \val(a) < \val(b) \iff \val(b) < \val(a),
    \end{equation*}
    implying $\val(a) = \val(b)$.
  \end{claimproof}
  \begin{claim}
    If $a \models p|A$ and $b \models p|A$ then $\val(a) = \val(b)$.
  \end{claim}
  \begin{claimproof}
    Let $c \models p|Aba$.  Then $(a,c)$ and $(b,c)$ both realize
    $p^{\otimes 2}|A$, so $\val(c) = \val(a) = \val(b)$.
  \end{claimproof}
  Now as $\Oo$ is non-trivial, we can find non-zero $\varepsilon \in
  \Kk$ such that $\val(\varepsilon) > 0$.  Let $a \models
  p|A\varepsilon$.  Then $a$ and $a \cdot \varepsilon$ both realize
  $p|A$, so
  \begin{equation*}
    \val(a) = \val(a \cdot \varepsilon),
  \end{equation*}
  contradicting the choice of $\varepsilon$.
\end{proof}  
\begin{theorem}
  Let $\Kk$ be a sufficiently saturated dp-finite field.  Then
  \emph{exactly} one of the following holds:
  \begin{enumerate}
  \item $\Kk$ has finite Morley rank.
  \item $\Kk$ is unstable and there is an $A$-invariant non-trivial
    valuation ring $\Oo$ on $\Kk$, for some small $A \subseteq \Kk$.
  \end{enumerate}
\end{theorem}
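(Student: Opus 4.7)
First I would observe that the two alternatives are mutually exclusive: finite Morley rank implies $\omega$-stability, which contradicts the unstable hypothesis in (2). So the task reduces to showing that every sufficiently saturated dp-finite field $\Kk$ lies in one of the two cases, i.e.\ that a dp-finite $\Kk$ which does not have finite Morley rank is unstable and carries an $A$-invariant non-trivial valuation ring.

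For the unstable case, I would invoke the canonical topology of \cite{prdf}: an unstable dp-finite $\Kk$ carries a non-discrete Hausdorff ring topology whose infinitesimals $I_K$ form a non-trivial $K$-type-definable additive subgroup of $\Kk$ for any small submodel $K \preceq \Kk$. From this $K$-invariant datum I would extract a $K$-invariant non-trivial valuation ring on $\Kk$; the natural candidate is the stabilizer subring $\Oo = \{x \in \Kk : xI_K \subseteq I_K\}$, possibly dominated by a valuation ring of $\Kk$ via a Chevalley-type extension to guarantee the valuation ring condition $x \in \Oo$ or $x^{-1} \in \Oo$. In any case, both objects produced are $K$-invariant, which places $\Kk$ in case (2) with $A = K$.

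For the stable case, the goal is to show finite Morley rank --- this is the ``alternative proof'' of the Halevi--Palac\'in theorem advertised in the introduction. The plan is to combine Lemma \ref{stable-valuation-rings} with the structure theory of stable fields. Stability supplies the unique additive and multiplicative generics already exploited in the proof of the lemma, while the non-existence of non-trivial $A$-invariant valuation rings kills the most obvious source of unbounded rank coming from a nontrivial definable or type-definable ring structure. Combined with dp-finiteness, this should force $\Kk$ to be algebraically closed with Morley rank bounded by $\dpr(\Kk)$.

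The main obstacle lies in the stable case. Lemma \ref{stable-valuation-rings} is only a non-existence statement, and bridging from it to finite Morley rank requires nontrivial input from the classification of stable and superstable fields (Macintyre, Cherlin--Shelah) together with a careful use of the dp-finiteness hypothesis to rule out $\emph{any}$ ``infinite-rank'' invariant structure on $\Kk$. The unstable case is comparatively routine given \cite{prdf}, but the precise construction of the $A$-invariant valuation ring from the bare additive datum $I_K$ deserves care, particularly to verify non-triviality and the valuation-ring axiom $x \in \Oo$ or $x^{-1} \in \Oo$.
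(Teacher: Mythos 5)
Your proof has the right skeleton for the easy parts (mutual exclusivity is indeed trivial, since finite Morley rank implies stability), but both halves of your stable/unstable case division contain genuine gaps, and you miss the single citation that constitutes essentially the whole proof in the paper. The paper does not split on stable versus unstable at all: it assumes $\Kk$ does not have finite Morley rank, invokes Theorem~10.28 of \cite{prdf} — which already produces the $A$-invariant non-trivial valuation ring under exactly this hypothesis — and then uses Lemma~\ref{stable-valuation-rings} to conclude that $\Kk$ must be unstable (since a stable field cannot carry such a ring). In particular, the statement ``stable dp-finite fields have finite Morley rank'' is a \emph{corollary} of this theorem in the paper, obtained by contraposition; it is not an ingredient one proves separately.

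Concretely: in your unstable case, the proposal to pass from $I_K$ to the stabilizer ring $\{x \in \Kk : xI_K \subseteq I_K\}$ and then to a dominating valuation ring does not work. That stabilizer ring is not known to be a valuation ring or even close to one — whether $I_K$ is an ideal of a (multi-)valuation ring is precisely the valuation-type conjecture of \S\ref{retreat}, which the author argues is probably \emph{false} for expansions of fields. And a Chevalley-type domination argument uses Zorn's lemma to choose a valuation ring dominating a local ring; that choice is not canonical, so the resulting valuation ring has no reason to be $\Aut(\Kk/A)$-invariant, which is the whole point of the statement. The actual construction in \cite{prdf} \S10 (pedestals, stabilizer rings $R_J$, the filtered union $R^\infty$) is substantially more delicate. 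In your stable case, you explicitly acknowledge that you have no bridge from Lemma~\ref{stable-valuation-rings} to finite Morley rank; Macintyre and Cherlin--Shelah apply to $\omega$-stable and superstable fields respectively, and a stable dp-finite field is not a priori either, so this half is simply missing. Both gaps disappear once Theorem~10.28 of \cite{prdf} is cited.
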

\begin{proof}
  Assume $\Kk$ does not have finite Morley rank.  Theorem~10.28 of
  \cite{prdf} proves that the valuation ring exists, and
  Lemma~\ref{stable-valuation-rings} proves that $\Kk$ is unstable.
\end{proof}

This gives another proof that stable fields of finite dp-rank have
finite Morley rank:
\begin{corollary}\label{cor:stable-finite-rank}
  If $\Kk$ is a stable field of finite dp-rank, then $\Kk$ has finite
  Morley rank.
\end{corollary}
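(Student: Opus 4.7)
The plan is to derive the corollary directly from the preceding dichotomy theorem by passing to a saturated elementary extension. First, I would let $\Kk \succeq K$ be a sufficiently saturated elementary extension. Stability and finiteness of dp-rank are properties of the complete theory, so $\Kk$ is again a stable field of finite dp-rank, and in particular is a sufficiently saturated dp-finite field to which the previous theorem applies.

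The theorem presents a dichotomy: either $\Kk$ has finite Morley rank, or $\Kk$ is unstable and admits a non-trivial $A$-invariant valuation ring for some small $A$. The second alternative is ruled out immediately by the hypothesis that $K$ (hence $\Kk$) is stable. Therefore $\Kk$ has finite Morley rank. Since Morley rank of the home sort depends only on the theory $\mathrm{Th}(K) = \mathrm{Th}(\Kk)$, it follows that $K$ itself has finite Morley rank.

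There is essentially no obstacle; the work has been done by the dichotomy theorem together with Lemma~\ref{stable-valuation-rings}, which is what forbids an invariant non-trivial valuation ring in the stable setting. The only subtlety worth noting explicitly is the transfer from the saturated model $\Kk$ back to $K$, which is automatic from the elementary-invariance of (finiteness of) Morley rank.
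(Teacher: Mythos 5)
Your proposal is correct and is exactly the paper's argument: pass to a sufficiently saturated elementary extension, apply the preceding dichotomy theorem, and note that stability rules out the second alternative (via Lemma~\ref{stable-valuation-rings}). The paper compresses this to ``we may assume $\Kk$ is sufficiently saturated, in which case the theorem applies,'' with the transfer back along elementary equivalence left implicit.
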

\begin{proof}
  We may assume $\Kk$ is sufficiently saturated, in which case the
  theorem applies.
\end{proof}
This was originally proven by Halevi and Palac\'in (\cite{Palacin},
proof of Proposition 7.2 plus the Buechler dichotomy).

\subsection{A remark on the henselianity conjecture} \label{nip-fields-remark}
In \cite{prdf}, Theorem~2.8, we proved that NIP valued fields of positive
characteristic are henselian.  It is conjectured that something
stronger holds:
\begin{conjecture}[Henselianity conjecture] \label{hens-conj}
  If $(K,\Oo)$ is an NIP valued field, then $\Oo$ is henselian.
\end{conjecture}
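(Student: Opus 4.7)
The plan is to reduce Conjecture~\ref{hens-conj} to its equicharacteristic~$0$ case, using the positive characteristic result Theorem~2.8 of \cite{prdf}, and then to attack the remaining case via the canonical topology machinery developed in the rest of this paper.

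First I would dispatch positive characteristic: if $\characteristic K = p > 0$, then $(K, \Oo)$ is a positive characteristic NIP valued field, and Theorem~2.8 of \cite{prdf} gives that $\Oo$ is henselian. So we may assume $\characteristic K = 0$. If additionally $\characteristic Kv = 0$, we are already in the targeted equicharacteristic~$0$ case.

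Second, the mixed characteristic case $\characteristic K = 0$, $\characteristic Kv = p > 0$ is handled by coarsening. Let $u$ be the finest coarsening of $v$ with $u(p) = 0$, equivalently the coarsening whose value group is $v(K^\times)$ modulo the convex hull of $v(p)$. Then $(K, u)$ is NIP equicharacteristic~$0$ (being interpretable in $(K, v)$), so granting the equicharacteristic~$0$ case of the conjecture, $u$ is henselian on $K$. Writing $v = \bar v \circ u$ and using that composition preserves henselianity, one reduces henselianity of $v$ to henselianity of the residual valuation $\bar v$ on $Ku$. The residual $\bar v$ is again mixed characteristic, so naive iteration of the decomposition does not terminate; the workaround is that the choice of $u$ as \emph{finest} equicharacteristic~$0$ coarsening forces $\bar v$ to have no non-trivial equicharacteristic~$0$ coarsening on $Ku$, and in that degenerate setting a direct argument (applying \cite{prdf} Theorem~2.8 to the characteristic~$p$ residue field together with Ax--Kochen--Ershov-style control coming from henselianity of $u$) delivers henselianity of $\bar v$.

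The main obstacle is the equicharacteristic~$0$ case itself, which is not accessible by the positive-characteristic methods of \cite{prdf}. The remainder of this paper is designed precisely to attack it: the canonical topology and the notion of valuation type are meant to produce a canonical valuation on an unstable dp-finite (eventually NIP) field, and the central conjecture --- that for some small $K \preceq \Kk$ the infinitesimals $I_K$ contain a nonzero multi-valuation ideal --- is what would upgrade this topology to a V-topology and close out the argument.
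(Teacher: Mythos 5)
First, note that the statement you are proving is labeled a \emph{conjecture} in the paper: it is not proven there, and your proposal (rightly) does not prove it either. What the paper actually establishes is the reduction of Conjecture~\ref{hens-conj} to the equicharacteristic~$0$ case (Conjecture~\ref{equi-char}), and your proposal is best read as an attempt at that same reduction. Your treatment of the equicharacteristic~$p$ case (via \cite{prdf}, Theorem~2.8) and your first coarsening step in mixed characteristic --- passing to the coarsening $u$ with value group $\Gamma/\Delta^+$, where $\Delta^+$ is the smallest convex subgroup containing $\val(p)$ --- agree with the paper.

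The genuine gap is in your handling of the residual valuation $\bar v$ on $Ku$. You correctly observe that $\bar v$ is still mixed characteristic and that iterating your decomposition goes nowhere, but the proposed workaround --- ``a direct argument (applying \cite{prdf} Theorem~2.8 to the characteristic~$p$ residue field together with Ax--Kochen--Ershov-style control coming from henselianity of $u$)'' --- is not an argument: henselianity of the coarsening $u$ gives no control whatsoever over the finer valuation $\bar v$, and \cite{prdf} Theorem~2.8 applies only to the equicharacteristic~$p$ part, not to the mixed-characteristic part of $\bar v$. The paper's fix is to introduce a second convex subgroup $\Delta^-$, the largest one \emph{not} containing $\val(p)$, and split $\bar v$ further into a piece $A \rightsquigarrow B$ with value group $\Delta^+/\Delta^-$ and a piece $B \rightsquigarrow k$ with value group $\Delta^-$. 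The latter is equicharacteristic $p$ and is handled by \cite{prdf} Theorem~2.8. The former is the irreducible mixed-characteristic core: its value group embeds in $(\Rr,+,\le)$, and it is henselian because it is \emph{spherically complete} --- but this requires first replacing $(K,\Oo)$ by a saturated elementary extension, a step you never take and without which spherical completeness fails. A secondary issue: $\Delta^+$ (and $\Delta^-$) are only externally definable, not definable, so your claim that $(K,u)$ is ``interpretable in $(K,v)$'' needs to be routed through the Shelah expansion to preserve NIP, as the paper does.
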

Consider the more restricted
\begin{conjecture}\label{equi-char}
  If $(K,\Oo)$ is an NIP valued field of residue characteristic 0,
  then $\Oo$ is henselian.
\end{conjecture}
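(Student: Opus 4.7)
The plan is to argue by contradiction. Assume $(K,\Oo)$ is an NIP valued field whose residue field has characteristic $0$ (so $\characteristic(K)=0$ as well) and that $\Oo$ is not henselian, and work inside a sufficiently saturated elementary extension.

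The first step is to convert non-henselianity into the existence of incomparable NIP valuations. Since $\Oo$ is not henselian, the associated valuation $v$ has at least two distinct extensions to the algebraic closure of $K$; restricting to a suitable finite Galois extension $L/K$ produces two Galois-conjugate, pairwise incomparable valuations $w_1,w_2$ on $L$, each still of residue characteristic $0$. Because $L$ is interpretable in $K$ using a finite tuple of coordinates, and each $w_i$ is obtained from $v$ by composing with the corresponding coordinate projection of that interpretation, the expanded structure $(L,w_1,w_2)$ is still NIP.

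The second step is to use the equicharacteristic-$0$ hypothesis to manufacture enough definable combinatorics to contradict NIP. By Kaplansky's theorem each $w_i$ admits a coefficient field inside its valuation ring, and pseudo-Cauchy behaviour is well controlled; combined with the approximation theorem for independent valuations on $L$, one obtains a large supply of elements whose $w_1$- and $w_2$-valuations can be prescribed independently on finite subsets. The aim is then to exhibit a formula of infinite alternation rank in $(L,w_1,w_2)$ — for instance an infinite half-graph built from a Boolean combination of $w_1$- and $w_2$-inequalities, or dually an infinite descending chain of definable subgroups of $(L,+)$ of uniformly bounded index, contradicting the Baldwin--Saxl theorem.

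This final conversion is where the real difficulty lies, and it is the reason the conjecture is open. In positive residue characteristic the Artin--Schreier map turns incomparable valuations into IP almost mechanically, and this is the engine behind Theorem~2.8 of \cite{prdf}. In equicharacteristic $0$ there is no analogous algebraic substitute, and one must instead produce a purely model-theoretic mechanism — perhaps via honest definitions, generically stable types, or a notion of dp-rank localised at the two valuations. Discovering such a mechanism is, in my view, the main obstacle; without it the strategy stalls at precisely the step of converting ``two incomparable NIP valuations of residue characteristic $0$'' into an actual NIP failure.
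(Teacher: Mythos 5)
The statement you were asked about is not proven in the paper at all: Conjecture~\ref{equi-char} is stated precisely as an \emph{open conjecture}, and the paper's only use of it is as a hypothesis in the subsequent proposition showing that the equicharacteristic-$0$ case implies the full henselianity conjecture. So there is no proof in the paper to compare yours against, and your proposal is not a proof either --- as you candidly acknowledge in your final paragraph, the argument stalls at the decisive step.

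To name the gap concretely: your first reduction (non-henselianity yields, on a finite normal extension $L/K$, two Galois-conjugate and hence incomparable valuations $w_1, w_2$ of residue characteristic $0$, with $(L,w_1,w_2)$ still NIP up to externally definable expansion) is standard and sound; it is essentially the same reduction the paper uses in Lemma~\ref{henselianity-finally}. But the second step --- converting ``two incomparable NIP valuations in equicharacteristic $0$'' into a violation of NIP --- is exactly the open problem, and your proposal offers only candidate mechanisms (half-graphs from the approximation theorem, Baldwin--Saxl chains) without constructing any of them. In positive residue characteristic the Artin--Schreier map supplies the needed uniformly definable family of additive subgroups, which is why Theorem~2.8 of \cite{prdf} goes through; in equicharacteristic $0$ the additive and multiplicative structure is too tame (all $n$-th power and Artin--Schreier-type maps are surjective on $1+\mm$ or unavailable), and no substitute is known. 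Your write-up is a reasonable survey of why the conjecture is hard, but it should not be presented as a proof attempt: the missing idea is the entire content of the conjecture.
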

\begin{proposition}
  Conjecture~\ref{equi-char} implies Conjecture~\ref{hens-conj}
\end{proposition}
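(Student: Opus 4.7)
The plan is to split $(K, v)$ into cases by characteristic and residue characteristic. If $v$ has residue characteristic $0$, then Conjecture~\ref{equi-char} applies directly and gives that $v$ is henselian. If $\characteristic(K) > 0$, so that $v$ is equicharacteristic $p$ for some prime $p$, then $v$ is henselian by Theorem~2.8 of \cite{prdf}. What remains is the mixed characteristic case, in which $\characteristic(K) = 0$ but the residue field has characteristic $p > 0$.

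For the mixed characteristic case, I would pass to the finest equicharacteristic-$0$ coarsening $w$ of $v$, corresponding to the convex subgroup $H_0 \subseteq v(K^\times)$ generated by $v(p)$. Since $w$ is $\emptyset$-definable in $(K, v)$, the valued field $(K, w)$ is again NIP, now of residue characteristic $0$, so by Conjecture~\ref{equi-char} the valuation $w$ is henselian. Factoring $v = \bar v \circ w$, where $\bar v$ is the induced valuation on the characteristic-$0$ NIP field $Kw$, the standard composition fact for henselian valuations ($v$ henselian iff $w$ and $\bar v$ are both henselian) reduces the problem to showing that $\bar v$ is henselian.

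The main obstacle is this residual step. The valued field $(Kw, \bar v)$ is again mixed characteristic NIP, and the value group $H_0$ of $\bar v$ now coincides with its own convex hull over $\Zz \bar v(p)$, so iterating the coarsening trick inside $Kw$ yields nothing new: the only equicharacteristic-$0$ coarsening of $\bar v$ is trivial. Closing the argument therefore requires an additional ingredient beyond Conjecture~\ref{equi-char} and Theorem~2.8 of \cite{prdf} as stated. The most natural route is to observe that Theorem~2.8 of \cite{prdf} in fact holds for all NIP valued fields of positive \emph{residue} characteristic (i.e.\ for mixed characteristic as well as equicharacteristic $p$), in which case the proposition becomes immediate and no coarsening argument is needed. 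Failing that, one would need an Ax--Kochen-style compactness transfer reducing the residual piece to an equicharacteristic setting --- and I expect the bulk of the proof, if not trivial from a strengthened form of Theorem~2.8, to amount to establishing such a transfer.
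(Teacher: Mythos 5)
Your case split and your first coarsening match the paper's proof, but the ``residual step'' you flag as an obstacle is a genuine gap, and neither of your two suggested escape routes is what the paper does. Theorem~2.8 of \cite{prdf} is genuinely about equicharacteristic $p$ (it is proved via Artin--Schreier-type arguments requiring $\characteristic(K) = p$), so it does not extend for free to mixed characteristic, and no Ax--Kochen transfer is invoked.

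The missing idea is to decompose the residual mixed-characteristic valuation $\bar v$ one more time, using the \emph{largest} convex subgroup $\Delta^-$ of $\Gamma$ \emph{not} containing $\val(p)$ (your $H_0$ is the paper's $\Delta^+$, the smallest convex subgroup containing $\val(p)$). This factors $\bar v$ as a rank-one mixed-characteristic piece $A \rightsquigarrow B$ with value group $\Delta^+/\Delta^-$, sitting on top of an equicharacteristic-$p$ piece $B \rightsquigarrow k$ with value group $\Delta^-$. The bottom piece is henselian by Theorem~2.8 of \cite{prdf}. The middle piece is henselian because it is spherically complete: its value group $\Delta^+/\Delta^-$ is the archimedean class of $\val(p)$ and hence embeds in $(\Rr,+,\le)$, so only countable chains of balls need to be checked, and these have nonempty intersection by the saturation of $(K,\Oo)$ (which is why the paper passes to a saturated model at the outset --- a step absent from your write-up). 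Two smaller points: the convex subgroups $\Delta^\pm$ are only externally definable in $\Gamma$ (the convex hull of $\Zz\val(p)$ is a countable union of definable sets, not a definable set), so the claim that the coarsening $w$ is $\emptyset$-definable is not right as stated; NIP of the whole three-step picture is instead obtained by interpreting it in the Shelah expansion. With these ingredients your outline closes.
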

\begin{proof}
  Let $(K,\Oo)$ be an NIP valued field.  We may assume $(K,\Oo)$ is
  saturated.
  \begin{itemize}
  \item If $(K,\Oo)$ is equicharacteristic 0, use
    Conjecture~\ref{equi-char}.
  \item If $(K,\Oo)$ is equicharacteristic $p$, use \cite{prdf},
    Theorem~2.8.
  \item If $(K,\Oo)$ is mixed characteristic $(0,p)$, proceed as
    follows.  Let $\Gamma$ be the value group.  Let $\Delta^+$ be the
    smallest convex subgroup of $\Gamma$ containing $\val(p)$.  Let
    $\Delta^-$ be the largest convex subgroup of $\Gamma$ not
    containing $\val(p)$.  Let $k$ be the residue field of $(K,\Oo)$.
    Using the convex subgroups, factor the specialization $K
    \rightsquigarrow k$ into three pieces:
    \begin{equation*}
      K \rightsquigarrow A \rightsquigarrow B \rightsquigarrow k
    \end{equation*}
    where
    \begin{enumerate}
    \item $K \rightsquigarrow A$ has value group $\Gamma/\Delta^+$.
    \item $A \rightsquigarrow B$ has value group $\Delta^+/\Delta^-$.
    \item $B \rightsquigarrow k$ has value group $\Delta^-$.
    \end{enumerate}
    The whole picture is NIP because it is interpretable
    in the Shelah expansion: the groups $\Delta^+$ and $\Delta^-$ are
    externally definable in $\Gamma$.

    As $\val(p) \in \Delta^+$, we know that $p$ has trivial valuation
    in $K \rightsquigarrow A$.  Therefore $A$ has characteristic 0.
    On the other hand, in $A \rightsquigarrow B$, $\val(p) \ne 0$,
    because $\val(p) \notin \Delta^-$.  Therefore $A \rightsquigarrow
    B$ is a mixed characteristic valuation.  It follows that
    \begin{align*}
      0 &= \characteristic(K) = \characteristic(A) \\
      p &= \characteristic(B) = \characteristic(k).
    \end{align*}
    Now $K \rightsquigarrow A$ is henselian by
    Conjecture~\ref{hens-conj}.  And $B \rightsquigarrow k$ is
    henselian by \cite{prdf}, Theorem~2.8.  Finally, $A
    \rightsquigarrow B$ is henselian because it is spherically
    complete\footnote{The value group of $A \rightsquigarrow B$ is a
      subgroup of $(\Rr,+,\le)$ so one only needs to consider
      countable chains.  Countable chains of balls have non-empty
      intersection because this held in $K \rightsquigarrow k$ by
      saturation.  For slightly more details, see \cite{arxiv-myself}
      \S 6.3, especially Remark~6.5 and the proof of Lemma~6.8.}
    \qedhere
  \end{itemize}
\end{proof}
So the henselianity conjecture reduces to the case of
equicharacteristic 0.

\section{Deformations and multiplicative infinitesimals} \label{first-of-multy}
In this section, $\Kk$ will be a sufficiently saturated unstable
dp-finite field.  We will (trivially) generalize the techniques of
\cite{prdf}, \S 6.3 to show that there is a good group of
``multiplicative $K$-infinitesimals'' for small models $K$.
\begin{definition}
  Let $K$ be a small model.  An affine symmetry 
  \begin{align*}
    f : \Kk & \to \Kk \\
    x & \mapsto a\cdot x + b
  \end{align*}
  is a \emph{$K$-deformation} if for every $K$-definable heavy set
  $X$, the intersection
  \begin{equation*}
    X \cap f^{-1}(X)
  \end{equation*}
  is heavy.
\end{definition}
We think of $K$-deformations as being the affine symmetries that are
``$K$-infinitesimally close'' to the identity map.
\begin{example}
  $f(x) = x + \varepsilon$ is a $K$-deformation if and only if
  $\varepsilon$ is a $K$-infinitesimal.
\end{example}
By Theorem~4.20.6-7 in \cite{prdf}, affine symmetries $x \mapsto
a\cdot x + b$ preserve heaviness.\footnote{This is the reason we are
  restricting to affine symmetries.  On the other hand,
  Theorem~\ref{new-mult-1}.\ref{nm22} below shows that \emph{all}
  definable bijections preserve heaviness.  Once this is known, the
  arguments of the present section could be repeated for general
  definable bijections.}
\begin{remark} \label{conj-remark}
  Let $K$ be a small model.  Let $f$ and $g$ be affine symmetries,
  with $f$ a $K$-deformation and $g$ being $K$-definable.  Then $g^{-1}
  \circ f \circ g$ is a $K$-deformation.
\end{remark}
\begin{proof}
  If $X$ is a $K$-definable heavy set, then $g(X)$ is also a
  $K$-definable heavy set, so
  \begin{equation*}
    g(X) \cap f^{-1}(g(X))
  \end{equation*}
  is heavy.  Therefore
  \begin{equation*}
    X \cap g^{-1}(f^{-1}(g(X)))
  \end{equation*}
  is heavy as well.
\end{proof}
We will show that the $K$-deformations are closed under composition
and inverses.  The proof is identical to \cite{prdf}, \S 6.3, but we
write out the details anyway.
\begin{definition}
  Let $K$ be a small model and $X \subseteq \Kk$ be $K$-definable.  An
  affine symmetry $f(x)$ is said to \emph{$K$-displace} $X$ if
  \begin{equation*}
    x \in X \cap x \in K \implies f(x) \notin X.
  \end{equation*}
\end{definition}
\begin{lemma}\label{heirs}
  Let $K \preceq K'$ be an inclusion of small models, and let $f(x) =
  a \cdot x + b$ and $f'(x) = a' \cdot x + b'$ be two affine symmetries of $\Kk$.
  Suppose that $\tp(a'b'/K')$ is an heir of $\tp(ab/K)$.
  \begin{enumerate}
  \item If $f$ is a $K$-deformation, then $f'$ is a $K'$-deformation.
  \item If $X \subseteq \Kk$ is $K$-definable and $K$-displaced by
    $f$, then $X$ is $K'$-displaced by $f'$.
  \end{enumerate}
\end{lemma}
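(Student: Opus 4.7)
The plan is to treat the two parts separately: Part 2 is an immediate consequence of the heir property applied to a single formula, while Part 1 reduces to a similar direct formula manipulation once one can express ``$X$ is heavy'' by an $L$-formula on its parameter, i.e., once one invokes the paper's theorem that heaviness is definable in definable families.

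For Part 2, I would argue by contradiction. If $f'$ fails to $K'$-displace $X$, there exists $x_0 \in X \cap K'$ with $a' x_0 + b' \in X$. Consider the $L(K)$-formula
\[
  \phi(u,v;w) \;\equiv\; w \in X \,\wedge\, uw + v \in X,
\]
which holds of $(a',b';x_0)$ with the parameter $x_0 \in K'$. The heir property of $\tp(a'b'/K')$ over $\tp(ab/K)$ yields some $x_0^\dagger \in K$ with $\phi(a,b;x_0^\dagger)$ holding, giving a point of $X \cap K$ whose image under $f$ also lies in $X$, contradicting that $f$ $K$-displaces $X$.

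For Part 1, I would again argue by contradiction. If $f'$ is not a $K'$-deformation, there is a formula $\psi(x,z)$ and a parameter $c' \in K'$ such that $X' := \psi(\Kk, c')$ is heavy while $X' \cap (f')^{-1}(X')$ is not. The plan is to invoke the paper's theorem that heaviness is a definable property in definable families, so there is a formula $H_\psi(z)$ expressing heaviness of $\psi(\Kk, z)$, and a formula $H_{\psi'}(u,v,z)$ expressing heaviness of $\{x : \psi(x,z) \wedge \psi(ux+v,z)\}$. The failure at $(a',b';c')$ is then captured by the $L$-formula
\[
  H_\psi(z) \,\wedge\, \neg H_{\psi'}(u,v,z)
\]
holding of $(u,v) = (a',b')$ with parameter $c' \in K'$. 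Applying the heir property produces a witness $c \in K$ with $H_\psi(c) \wedge \neg H_{\psi'}(a,b,c)$; the set $X := \psi(\Kk, c)$ is then a $K$-definable heavy set with $X \cap f^{-1}(X)$ not heavy, contradicting that $f$ is a $K$-deformation.

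The main obstacle is this last use of definability of heaviness. If that theorem is not yet available at this stage of the paper, the backup is to follow \cite{prdf}, \S 6.3 verbatim, which handles the special case of translations $x \mapsto x + \varepsilon$; the passage from translations to affine symmetries $x \mapsto ax + b$ is purely notational, as the paper itself asserts (``trivially generalize''), so whichever substitute for definability was used there will transport unchanged. A natural such substitute is to combine $\Aut(\Kk/K)$-invariance of heaviness with the finite-character properties built into the definition of heavy via broad sets in \cite{prdf}, which together suffice to transfer a witness from $K'$ down to $K$ along the heir.
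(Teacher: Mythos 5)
Your proposal is correct and follows essentially the same route as the paper: both parts are handled by expressing the failure (of displacement, resp.\ of deformation) as a first-order condition on a parameter from $K'$ --- using \cite{prdf} Theorem~4.20.4 (definability of heaviness in families, which is indeed available here) for part 1 --- and then transferring the witness to $K$ via the heir property. The paper phrases this slightly differently (first reducing to $a'b'=ab$ and then using finite satisfiability of $\tp(c/Kab)$ in $K$, the coheir dual of your direct application of the heir definition), but the argument is the same.
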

\begin{proof}
  The assumptions imply that $a'b' \equiv_K ab$, and the statements
  about $ab$ are $K$-invariant, so we may assume $a'b' = ab$ and $f =
  f'$.
  \begin{enumerate}
  \item Suppose $f$ fails to be a $K'$-deformation.  Then there is a
    tuple $c \in \dcl(K')$ such that $\phi(\Kk;c)$ is heavy but
    \begin{equation*}
      \phi(\Kk;c) \cap f^{-1}(\phi(\Kk;c))
    \end{equation*}
    is light (not heavy).  These conditions on $c$ are
    $Kab$-definable.  Because $\tp(c/Kab)$ is finitely satisfiable in
    $K$, we can find such a $c$ in $K$.  Then $f$ fails to be a
    $K$-deformation because of the $K$-definable set $\phi(\Kk;c)$.
  \item Suppose $f$ fails to $K'$-displace $X$.  Then there exists $c
    \in \dcl(K')$ such that $c \in X$ and $f(c) \in X$.  These
    conditions on $c$ are $Kab$-definable, so we can find such a $c$
    in $K$.  Then $f$ fails to $K$-displace $X$. \qedhere
  \end{enumerate}
\end{proof}
\begin{lemma}
  \label{inf-heart}
  Let $K$ be a small model defining a critical coordinate
  configuration.  Let $f$ be a $K$-deformation, and $X
  \subseteq \Kk$ be a $K$-definable set that is $K$-displaced by
  $f$.  Then $X$ is light.
\end{lemma}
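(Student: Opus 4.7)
The plan is to argue by contradiction. Suppose $X$ is heavy. Applying the $K$-deformation property of $f$ to the heavy $K$-definable set $X$ itself gives that $X \cap f^{-1}(X)$ is heavy, hence in particular nonempty. Any witness $c \in X$ with $f(c) \in X$ would, if it were in $K$, contradict the $K$-displacement of $X$ by $f$. So the whole problem reduces to upgrading an arbitrary witness $c \in \Kk$ to a $K$-rational one.

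To manufacture such a witness I would mimic the additive argument of \cite{prdf}, \S 6.3, and exploit the hypothesis that $K$ defines a critical coordinate configuration. The role of that configuration is precisely to certify heaviness in terms of a combinatorial array of $K$-rational parameters, rather than only by abstract dp-rank: roughly, one extracts from the configuration a generic probe whose heavy values over $K$ are detected by $K$-points. Concretely, using Lemma~\ref{heirs} I would replace $f$ by an heir $f'$ with coefficients $(a',b')$ over a suitable enlargement $K'$ of $K$ that realizes the critical configuration. Lemma~\ref{heirs}(1) ensures $f'$ is a $K'$-deformation, and Lemma~\ref{heirs}(2) ensures $X$ is still $K'$-displaced by $f'$. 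The heaviness of $X \cap f'^{-1}(X)$ can then be read off against the critical coordinate configuration in $K$; this lets us locate a $K$-point $c_0 \in X$ with $f'(c_0) \in X$, which by the preserved displacement property is the desired contradiction. Remark~\ref{conj-remark} and the heaviness-preservation part of Theorem~4.20.6--7 of \cite{prdf} provide exactly the compatibility of affine symmetries with heaviness and with the configuration that is needed to make the additive argument go through verbatim in the affine setting.

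The main obstacle is the step that converts abstract heaviness of $X \cap f^{-1}(X)$ into a genuine $K$-rational witness via the critical coordinate configuration; this is the substantive content of the lemma, and the reason the hypothesis about the configuration appears at all. Once that conversion is in hand, the affine generalization over the additive case in \cite{prdf}, \S 6.3 is cosmetic: the proof only uses that affine symmetries preserve heaviness and that conjugation of a $K$-deformation by a $K$-definable affine symmetry is again a $K$-deformation, both of which are already recorded above.
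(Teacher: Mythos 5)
Your reduction is correct as far as it goes, but the step you yourself flag as ``the main obstacle'' --- converting heaviness of $X \cap f^{-1}(X)$ into a $K$-rational witness --- is exactly where the proposal breaks down, and the mechanism you sketch does not work. The set $X \cap f'^{-1}(X)$ is definable over $K'a'b'$, not over $K'$ (let alone over $K$), so the lemmas relating heavy $K$-definable sets to their $K$-points (\cite{prdf}, Lemmas 4.21--4.22, which is what the critical-coordinate-configuration hypothesis is really there for) simply do not apply to it; heaviness of a set that is not $K$-definable yields no $K$-points whatsoever. A single heir replacement therefore cannot produce the contradiction, and no finite amount of this kind of bookkeeping can either: the statement genuinely uses NIP, which appears nowhere in your argument.

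The paper's proof instead iterates the heir construction infinitely often: it builds a chain $K = K_0 \preceq K_1 \preceq \cdots$ and heirs $f_n$ of $f$ over $K_n$ with $a_nb_n \in K_{n+1}$, so that each $f_n$ is a $K_n$-deformation and $X$ is $K_n$-displaced by $f_n$ (this is the content of Lemma~\ref{heirs}). It then defines a binary tree of sets $X_\alpha$ indexed by $\{0,1\}^{<\omega}$, splitting at level $n$ on whether $f_n(x) \in X$, and shows that both children of a heavy node are heavy: the ``$1$'' child via the deformation property of $f_n$, and the ``$0$'' child via the displacement property combined with \cite{prdf}, Lemma~4.22 (a $K_n$-definable heavy set whose $K_n$-points all lie in a definable set forces that set to be heavy --- this is the only place a ``$K$-rational witness'' argument enters, and it goes through precisely because $X_\alpha$ \emph{is} $K_n$-definable). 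If $X$ were heavy, every node of the tree would be nonempty, giving an independence pattern for the formula ``$f_n(x)\in X$'' and contradicting NIP. To repair your proposal you would need to supply this iterated construction and the NIP endgame; the single-step version cannot be completed.
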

\begin{proof}
  Let $f(x) = a \cdot x + b$.  Build a sequence $a_0b_0, a_1b_1,
  \ldots$ and $K = K_0 \preceq K_1 \preceq K_2 \preceq \cdots$ so that
  for each $i$,
  \begin{itemize}
  \item $\tp(a_ib_i/K_i)$ is an heir of $\tp(ab/K)$.
  \item $K_{i+1} \ni a_ib_i$
  \end{itemize}
  Let $f_i(x) = a_i \cdot x + b_i$.  By Lemma~\ref{heirs},
  \begin{itemize}
  \item $f_i$ is a $K_i$-deformation.
  \item The set $X$ is $K_i$-displaced by $f_i$.
  \end{itemize}
  For $\alpha$ a string in $\{0,1\}^{< \omega}$, define $X_\alpha$
  recursively as follows:
  \begin{itemize}
  \item $X_{\{\}} = X$.
  \item If $\alpha$ has length $n$, then $X_{\alpha 0} = \{x \in
    X_\alpha ~|~ f_n(x) \notin X\}$.
  \item If $\alpha$ has length $n$, then $X_{\alpha 1} = \{x \in
    X_\alpha ~|~ f_n(x) \in X\}$.
  \end{itemize}
  For example
  \begin{align*}
    X_0 = \{x \in X ~|~ &f_0(x) \notin X\} \\
    X_1 = \{x \in X ~|~ &f_0(x)  \in X\} \\
    X_{011} = \{x \in X ~|~ &f_0(x) \notin X,\\ &f_1(x)
    \in X,\\& f_2(x) \in X\}.
  \end{align*}
  Note that some of the $X_\alpha$ must be empty, by NIP.  Note also
  that if $\alpha$ has length $n$, then $X_\alpha$ is $K_n$-definable.
  \begin{claim}
    If $X_\alpha$ is heavy, then $X_{\alpha 1}$ is heavy.
  \end{claim}
  \begin{claimproof}
    Let $\alpha$ have length $n$.  Then $X_\alpha$ is heavy and
    $K_n$-definable, and $f_n$ is a $K_n$-deformation.  Consequently
    $X_\alpha \cap f_n^{-1}(X_\alpha)$ is heavy.  But
    \begin{equation*}
      X_{\alpha 1} = X_\alpha \cap f_n^{-1}(X) \supseteq X_\alpha
      \cap f_n^{-1}(X_\alpha),
    \end{equation*}
    and so $X_{\alpha 1}$ must be heavy.
  \end{claimproof}
  \begin{claim}
    If $X_\alpha$ is heavy, then $X_{\alpha 0}$ is heavy.
  \end{claim}
  \begin{claimproof}
    Let $\alpha$ have length $n$.  The set $X_\alpha$ is
    $K_n$-definable.  Note that
    \begin{equation*}
      x \in X_\alpha(K_n) \implies x \in X(K_n) \implies f_n(x) \notin
      X(K_n)
    \end{equation*}
    because $X$ is $K_n$-displaced by $f_n$.  Therefore
    \begin{equation*}
      X_\alpha(K_n) \subseteq X_{\alpha 0}.
    \end{equation*}
    By \cite{prdf}, Lemma~4.22 it follows that $X_{\alpha 0}$ is
  \end{claimproof}
  If $X = X_{\{\}}$ is heavy, then the two claims imply that every
  $X_\alpha$ is heavy, hence non-empty, for every $\alpha$.  This
  contradicts NIP.
\end{proof}


\begin{lemma}
  \label{inf-subtr}
  Let $K$ be a model defining a critical coordinate configuration.
  Let $f_1, f_2$ be two $K$-deformations.  Then $f_2 \circ f_1^{-1}$
  is a $K$-deformation.
\end{lemma}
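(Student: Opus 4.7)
The plan is to decompose the statement into two sub-claims:
(a) inverses of $K$-deformations are $K$-deformations, and
(b) compositions of two $K$-deformations are $K$-deformations.
Applying (a) to $f_1$ and then (b) to $f_2$ and $f_1^{-1}$ gives the lemma.

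Sub-claim (a) follows immediately from the fact that affine symmetries preserve heaviness (\cite{prdf}, Theorem~4.20.6-7): for a $K$-definable heavy $X$, the $K$-deformation property of $f$ gives $X \cap f^{-1}(X)$ heavy; applying the heaviness-preserving bijection $f$ then yields $f(X) \cap X = X \cap (f^{-1})^{-1}(X)$ heavy. The critical coordinate configuration is not needed here.

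Sub-claim (b) is the heart of the matter. The key intermediate step is the following, call it (*): \emph{if $h$ is a $K$-deformation and $X$ is $K$-definable, then $X \setminus h^{-1}(X)$ is light.} Tautologically, $X \setminus h^{-1}(X)$ is $K$-displaced by $h$, since if $x \in X(K)$ with $h(x) \notin X$ then $h(x) \notin X \setminus h^{-1}(X) \subseteq X$. The main obstacle is that $X \setminus h^{-1}(X)$ is only $Kab$-definable, where $ab$ are the parameters of $h$, so Lemma~\ref{inf-heart} does not apply verbatim. I would address this by mimicking the heir-tree construction inside the proof of Lemma~\ref{inf-heart}: build $K = K_0 \preceq K_1 \preceq \cdots$ with $a_ib_i \in K_{i+1}$ and $\tp(a_ib_i/K_i)$ an heir of $\tp(ab/K)$, use Lemma~\ref{heirs} to transfer both the $K_n$-deformation and the displacement property at each level, and derive a contradiction with NIP should $X \setminus h^{-1}(X)$ be heavy.

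Given (*), sub-claim (b) follows from the set-theoretic inclusion
\begin{equation*}
X \setminus (g \circ f)^{-1}(X) \;\subseteq\; \bigl(X \setminus f^{-1}(X)\bigr) \;\cup\; f^{-1}\bigl(X \setminus g^{-1}(X)\bigr),
\end{equation*}
obtained by splitting on whether $f(x) \in X$. Both summands on the right are light: the first by (*) applied to $f$, and the second because (*) applied to $g$ makes $X \setminus g^{-1}(X)$ light while the affine bijection $f$ preserves lightness. Finite unions of light sets are light (from the filter property of heavy sets, \cite{prdf}), so $X \setminus (g \circ f)^{-1}(X)$ is light and hence $X \cap (g \circ f)^{-1}(X)$ is heavy. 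The only nontrivial bookkeeping is in (*); once that is handled the decomposition and union arguments are routine.
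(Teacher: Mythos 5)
Your sub-claim (a) is fine, and the set-theoretic decomposition at the end of (b) is correct, but the key intermediate step (*) is \emph{false}, so the argument cannot be repaired along the proposed lines. Counterexample: let $K$ be a (sufficiently large) small model of $\mathrm{Th}(\Qq_p)$ inside the monster $\Kk$, let $X = P_2$ be the set of nonzero squares (a $0$-definable set of finite index in $\Kk^\times$, hence infinite, hence heavy since heavy $=$ infinite in the dp-minimal case), and let $h(x) = x + \epsilon$ where $\epsilon = p\delta^2$ for some nonzero $K$-infinitesimal $\delta$. Then $\epsilon$ is a nonzero nonsquare $K$-infinitesimal, so $h$ is a $K$-deformation. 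For any unit $t$, the element $x = (\epsilon t)^2$ lies in $X$, while $x + \epsilon = \epsilon(1 + \epsilon t^2)$ is $\epsilon$ times the square of a $1$-unit (Hensel), hence a nonsquare, so $x + \epsilon \notin X$. Thus $X \setminus h^{-1}(X)$ contains the infinite set $\{(\epsilon t)^2 : t \in \Oo^\times\}$ and is heavy. Note that this set is displaced by $h$ at \emph{every} point, not merely at $K$-points; the moral is that the $K$-definability hypothesis in Lemma~\ref{inf-heart} is essential rather than bookkeeping, and no heir-tree construction can establish (*).

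What is true, and what the paper proves, is weaker. Cover $X$ by $D_0 = \{x \in X : f_1(x) \in X,\ f_2(x) \in X\}$, $D_1 = \{x : f_1(x) \notin X\}$, $D_2 = \{x : f_2(x) \notin X\}$, and apply Lemma~4.21 of \cite{prdf} to get a $K$-definable heavy $X' \subseteq X$ with $X'(K)$ contained in a single $D_j$. The cases $j = 1,2$ are ruled out by Lemma~\ref{inf-heart}, which does apply because $X'$ \emph{is} $K$-definable; the remaining case $j = 0$ gives (via Lemma~4.22 of \cite{prdf}) that $D_0$ is heavy, whence $f_1(D_0) \subseteq X \cap f_1(f_2^{-1}(X))$ is heavy. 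This concludes only that the ``good'' piece $D_0$ is heavy, not that its complement in $X$ is light, and that weaker conclusion suffices. If you want to salvage your two-step decomposition, the composition step should be run through this covering argument rather than through the complement-is-light claim.
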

\begin{proof}
  Let $X$ be a $K$-definable heavy set; we will show that $X \cap
  f_1(f_2^{-1}(X))$ is heavy.  Note that $X$ is covered by the union
  of the following three sets:
  \begin{align*}
    D_0 &:= \{x \in X ~|~ f_1(x) \in X,~ f_2(x) \in X\}
    \\
    D_1 & := \{x \in \Kk ~|~ f_1(x) \notin X \} \\
    D_2 & := \{x \in \Kk ~|~ f_2(x) \notin X \}.
  \end{align*}
  By \cite{prdf}, Lemma~4.21, there is a $j \in \{0,1,2\}$ and a
  $K$-definable heavy set $X' \subseteq X$ such that $X'(K) \subseteq
  D_j$.  If $j > 0$, then
  \begin{equation*}
    x \in X'(K) \implies x \in D_j \implies f_j(x) \notin X
    \implies f_j(x) \notin X'.
  \end{equation*}
  In other words, $X'$ is $K$-displaced by $f_j$.  But $X'$ is heavy
  and $f_j$ is a $K$-deformation, so this would contradict
  Lemma~\ref{inf-heart}.  Therefore $j = 0$.  The fact that $X'(K)
  \subseteq D_0$ implies that $D_0$ is heavy, by \cite{prdf},
  Lemma~4.22.  By definition of $D_0$,
  \begin{align*}
    f_1(D_0) &\subseteq X \\
    f_2(D_0) &\subseteq X \\
    D_0 & \subseteq f_2^{-1}(X) \\
    f_1(D_0) &\subseteq f_1(f_2^{-1}(X)) \\
    f_1(D_0) & \subseteq X \cap f_1(f_2^{-1}(X)).
  \end{align*}
  Heaviness of $D_0$ then implies heaviness of $X \cap
  f_1(f_2^{-1}(X))$.
\end{proof}

\begin{theorem} \label{defo-group}
  If $K$ is any small model, the $K$-deformations form a subgroup of
  the $\Kk$-definable affine symmetries of $\Kk$.
\end{theorem}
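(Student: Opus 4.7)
The plan is to reduce to Lemma~\ref{inf-subtr} via the heir-invariance already established in Lemma~\ref{heirs}. The identity is trivially a $K$-deformation; moreover, closure of $K$-deformations under the operation $(f_1,f_2) \mapsto f_2 \circ f_1^{-1}$ already entails closure under inverses (take $f_2 = \id$) and under composition (write $g \circ f = g \circ (f^{-1})^{-1}$). So I only need to verify that for an arbitrary small model $K$, the map $f_2 \circ f_1^{-1}$ is a $K$-deformation whenever $f_1, f_2$ are.

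The key observation that lets us drop the ``critical coordinate configuration'' hypothesis of Lemma~\ref{inf-subtr} is that whether an affine symmetry $x \mapsto ax + b$ is a $K$-deformation depends only on the type $\tp(ab/K)$, since both heaviness and $K$-definability are $\Aut(\Kk/K)$-invariant. Accordingly, I would fix any small model $K' \succeq K$ that does define a critical coordinate configuration, write $(a_1,b_1,a_2,b_2)$ for the joint parameters of $(f_1,f_2)$, and pick $(a_1',b_1',a_2',b_2')$ realizing an heir of $\tp(a_1 b_1 a_2 b_2/K)$ over $K'$. Letting $f_i'$ be the affine symmetry with parameters $(a_i',b_i')$, Lemma~\ref{heirs} (applied to each coordinate pair, using that the projection of an heir is an heir) promotes each $f_i'$ to a $K'$-deformation. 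Lemma~\ref{inf-subtr} at $K'$ then gives that $f_2' \circ (f_1')^{-1}$ is a $K'$-deformation, and in particular a $K$-deformation.

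To finish, observe that $f_2' \circ (f_1')^{-1}$ is obtained from $(a_1',b_1',a_2',b_2')$ by the same rational expression that yields $f_2 \circ f_1^{-1}$ from $(a_1,b_1,a_2,b_2)$. Hence the two composites have the same type over $K$, and the $K$-invariance noted above transfers the property of being a $K$-deformation from the primed to the unprimed side. I do not anticipate any real obstacle here: all the delicate content lives in Lemma~\ref{inf-subtr}, and what remains is the standard heir maneuver for removing a saturation-style hypothesis on the base model.
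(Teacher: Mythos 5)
Your proof is correct and takes essentially the same route as the paper: reduce to Lemma~\ref{inf-subtr} over a model defining a critical coordinate configuration, using Lemma~\ref{heirs} to arrange the heir hypothesis. The only cosmetic difference is that the paper keeps $f_1,f_2$ fixed and instead moves the auxiliary model $K''$ so that $\tp(a_ib_i/K'')$ is an heir of $\tp(a_ib_i/K)$, which makes the final transfer immediate (a $K''$-deformation is a fortiori a $K$-deformation since $K \preceq K''$), whereas you move the parameters and then transfer back via $\Aut(\Kk/K)$-invariance of the deformation property; both maneuvers are valid.
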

\begin{proof}
  Let $f_1, f_2$ be two $K$-deformations.  We claim $f_1 \circ
  f_2^{-1}$ is a $K$-deformation.  Let $K' \succeq K$ be a small model
  defining $f_1, f_2$.  Let $K'' \succeq K$ be a small model defining
  a critical coordinate configuration.  Move $K''$ over $K$ so that
  $\tp(K''/K')$ is finitely satisfiable in $K$.  Let $a_i, b_i$ be
  such that $f_i(x) = a_i \cdot x + b_i$.  Then
  \begin{equation*}
    a_1a_2b_1b_2 \in \dcl(K')
  \end{equation*}
  and so $\tp(K''/Ka_ib_i)$ is finitely satisfiable in $K$ for $i =
  1,2$.  By Lemma~\ref{heirs}, $f_1$ and $f_2$ are $K''$-deformations.
  By Lemma~\ref{inf-subtr} $f_1 \circ f_2^{-1}$ is a $K''$-deformation.
  A fortiori, it is a $K$-deformation: if $X$ is any $K$-definable
  heavy set then $X$ is a $K''$-definable heavy set and so
  \begin{equation*}
    X \cap f_1(f_2^{-1}(X))
  \end{equation*}
  is heavy.
\end{proof}

\begin{definition}
  Let $K$ be a small model.  An element $\mu \in \Kk^\times$ is a
  \emph{multiplicative $K$-infinitesimal} if the map $x \mapsto \mu
  \cdot x$ is a $K$-deformation.

  Equivalently, $\mu$ is a multiplicative $K$-infinitesimal if for
  every $K$-definable heavy set $X$, the intersection
  \begin{equation*}
    X \cap (\mu \cdot X)
  \end{equation*}
  is heavy.
\end{definition}
\begin{theorem} \label{mult-1}
  Let $K$ be a small model.
  \begin{enumerate}
  \item \label{m11} Multiplicative $K$-infinitesimals form a subgroup
    $U_K \le \Kk^\times$.
  \item \label{m12} $U_K$ is type-definable over $K$.
  \item \label{interesting} If $\mu \in U_K$, then $\mu - 1$ is an
    (additive) $K$-infinitesimal.
  \item \label{min-heavy} Let $G$ be a subgroup of $\Kk^\times$,
    type-definable over $K$.  Suppose that for every $K$-definable set
    $D \supseteq G$, $D$ is heavy.  Then $U_K \le G$.
  \end{enumerate}
\end{theorem}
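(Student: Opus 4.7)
The plan is to handle the four parts in order, with Parts~\ref{m11} and \ref{m12} following quickly from the framework already built. Writing $M_\mu$ for the dilation $x \mapsto \mu x$, we have $M_\mu \circ M_\nu = M_{\mu\nu}$ and $M_\mu^{-1} = M_{\mu^{-1}}$, so Part~\ref{m11} is immediate from Theorem~\ref{defo-group}: $U_K$ is the preimage in $\Kk^\times$ of the subgroup of $K$-deformations that happen to be dilations. For Part~\ref{m12}, I would write
\begin{equation*}
  U_K = \bigcap_X \{\mu \in \Kk^\times : X \cap \mu X \text{ is heavy}\},
\end{equation*}
with $X$ ranging over $K$-definable heavy subsets of $\Kk$; invoking the corollary (one of the main theorems of the paper) that heaviness is a definable property in definable families, each clause is itself $K$-definable, so $U_K$ is $K$-type-definable.

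Part~\ref{interesting} is the part requiring a genuine calculation. The idea is to conjugate $M_\mu$ by the $K$-definable translation $T_{-1}(x) = x - 1$: by Remark~\ref{conj-remark}, the affine symmetry $f := T_{-1}^{-1} \circ M_\mu \circ T_{-1}$ is again a $K$-deformation, and a direct computation gives $f(x) = \mu x + (1 - \mu)$. Since $M_\mu^{-1} \in U_K$ by Part~\ref{m11}, Theorem~\ref{defo-group} says the composition $f \circ M_\mu^{-1}$ is a $K$-deformation, and a second computation gives $f(M_\mu^{-1}(x)) = x + (1 - \mu)$. Hence translation by $1 - \mu$ is a $K$-deformation, so in particular $1 - \mu \in X - X$ for every $K$-definable heavy $X$; closure of each $X - X$ under negation then yields $\mu - 1 \in I_K$.

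For Part~\ref{min-heavy}, I would argue by contradiction. Suppose $\mu \in U_K \setminus G$; since $G$ is a subgroup not containing $\mu$, the type-definable sets $G$ and $\mu G$ are disjoint. By saturation, and after closing the partial type defining $G$ under finite conjunction, some single $K$-definable $D$ in that partial type already satisfies $D \cap \mu D = \emptyset$. Such a $D$ is a $K$-definable superset of $G$, hence heavy by the hypothesis on $G$; but $\mu \in U_K$ forces $D \cap \mu D$ to be heavy, contradicting emptiness.

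The main obstacle is Part~\ref{interesting}: one has to produce a non-trivial additive $K$-deformation from the multiplicative $K$-deformation $M_\mu$, and the trick is to conjugate $M_\mu$ by some non-trivial $K$-definable translation (the choice $T_{-1}$ is convenient) and then cancel the linear coefficient by composing with $M_\mu^{-1}$. Everything else is a clean application of Theorem~\ref{defo-group}, the heaviness-in-families corollary, or a one-line saturation argument.
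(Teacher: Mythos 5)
Your proposal is correct and matches the paper's proof essentially step for step: parts (1)--(2) are the same appeals to Theorem~\ref{defo-group} and to definability of heaviness in families, part (3) is the same commutator trick (the paper conjugates the dilation by $x \mapsto x+1$ rather than $x \mapsto x-1$, which only flips a sign), and part (4) is the contrapositive phrasing of the paper's observation that $G = \bigcap\{D \cdot D^{-1}\}$ and that $D \cap \mu D \neq \emptyset$ forces $\mu \in D \cdot D^{-1}$. One small citation note: the definability of heaviness in families needed for part (2) is Theorem~4.20.4 of the predecessor paper, not a new result of this one (the new result, proved later, is that heaviness coincides with full rank).
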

\begin{proof}~
  \begin{enumerate}
  \item Follows directly from Theorem~\ref{defo-group}.
  \item For any heavy $K$-definable set $X$, the set
    \begin{equation*}
      X' = \{ \mu \in \Kk^\times ~|~ X \cap (\mu \cdot X) \textrm{ is
        heavy}\}
    \end{equation*}
    is definable, because heaviness is definable in families
    (\cite{prdf}, Theorem~4.20.4).  As heaviness is
    $\Aut(\Kk/\emptyset)$-invariant, this set is $K$-definable.  The
    intersection of all such $X'$ is the group $U_K$ of multiplicative
    $K$-infinitesimals.
  \item Let $f(x) = x \cdot \mu$.  Let $g(x) = x + 1$.  Then $f$ is a
    $K$-deformation and $g$ is $K$-definable.  By
    Remark~\ref{conj-remark} and Theorem~\ref{defo-group}, the
    commutator $g^{-1} \circ f \circ g \circ f^{-1}$ is a
    $K$-deformation.  But this map is exactly
    \begin{equation*}
      x \mapsto \left(\frac{x}{\mu} + 1 \right) \cdot \mu - 1 = x +
      (\mu - 1).
    \end{equation*}
    Therefore, $\mu - 1$ is an (additive) $K$-infinitesimal.
  \item Note that
    \begin{equation*}
      G = \bigcap \{ D \cdot D^{-1} ~|~ D \textrm{ is $K$-definable, } D
      \supseteq G\},
    \end{equation*}
    where $D^{-1}$ denotes $\{x^{-1} ~|~ x \in D\}$.  Indeed, every
    set in the intersection contains $G$, and if $g$ is a point in the
    intersection, we can by compactness find $a, b \in G$ such that $g
    = a \cdot b^{-1}$.

    Now suppose that $\mu$ is a multiplicative $K$-infinitesimal.  By
    the assumption on $G$, all the sets $D$ are heavy.  By definition
    of mulitplicative infinitesimal, the sets $D \cap \mu \cdot D$ are
    heavy, hence \emph{non-empty}.  This means that $\mu \in D \cdot
    D^{-1}$ for all $D$, and so $\mu \in G$. \qedhere
  \end{enumerate}
\end{proof}

\section{Simultaneous coheir independence and dp-rank independence}
In a dp-finite structure, there are several senses in which two
elements $a, b$ can be ``independent'' over a small model $M$:
\begin{itemize}
\item coheir independence: $\tp(a/Mb)$ is finitely satisfiable in $M$.
\item dp-rank independence: $\dpr(ab/M) = \dpr(a/M) + \dpr(b/M)$.
\end{itemize}
It turns out that one can move $a, b$ over $M$ to arrange for both
conditions to hold simultaneously.  The proof is a bit confusing.

\begin{lemma}\label{point-of-confusion-0}
  Let $M$ be an $|A|^+$-saturated structure for some $A \subseteq M$.
  Suppose $\{\phi(x;b_{ij})\}_{i < r, j \in \Zz}$ is an ict-pattern of
  depth $r$ in some partial type $\Sigma(x)$ over $A$.  Then there
  exists $\{b'_{ij}\}_{i < r, j \in \Zz}$ and $\{a_\eta\}_{\eta : r
    \to \Zz}$ such that
  \begin{itemize}
  \item $\{\phi(x;b'_{ij})\}$ is an ict-pattern of depth $r$ in
    $\Sigma(x)$.
  \item The $a_\eta$ are witnesses:
    \begin{align*}
      M &\models \Sigma(a_\eta) \\
      M &\models \phi(a_\eta;b'_{ij}) \iff j = \eta(i).
    \end{align*}
  \item The array $\{b'_{ij}\}$ is mutually indiscernible over $A$.
  \item The type $\tp(a_\eta/A)$ is independent of $\eta$.
  \end{itemize}
\end{lemma}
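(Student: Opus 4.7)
The plan is to proceed in three stages: first replace the given ict-pattern by one whose parameter array is mutually indiscernible over $A$; then realize a witness $a_{\eta_0}$ for one distinguished branch $\eta_0$; and finally, for each other $\eta$, realize a witness $a_\eta$ having the same type over $A$ as $a_{\eta_0}$. The content is mostly an unravelling of what mutual indiscernibility of the $b'_{ij}$ over $A$ buys us, combined with $|A|^+$-saturation of $M$.

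For the extraction, I start from the given ict-pattern $\{\phi(x;b_{ij})\}$. The ict-pattern condition---consistency in $\Sigma$ of $\{\phi(x;b_{i\eta(i)})\}_{i<r} \cup \{\neg\phi(x;b_{ij}) : j \neq \eta(i)\}$ for every $\eta : r \to \Zz$---is a conjunction of first-order statements about the array. An iterated Erd\H{o}s--Rado/Ramsey argument applied row by row produces a new array $\{b'_{ij}\}$ which is mutually indiscernible over $A$ and still satisfies every ict-pattern consistency condition (these conditions only constrain finite subtuples and so are preserved by the standard extraction). A priori the $b'_{ij}$ live in some large elementary extension of $M$, but mutual indiscernibility over $A$ together with the ict-pattern consistency conditions form a partial type over $A$ in the variables $b'_{ij}$ of size at most $|A| + \aleph_0$, so by $|A|^+$-saturation I can arrange $b'_{ij} \in M$.

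Next, fix a base branch $\eta_0 : r \to \Zz$, say $\eta_0 \equiv 0$, and let $p_\eta(x)$ denote $\Sigma(x) \cup \{\phi(x;b'_{i\eta(i)})\}_{i<r} \cup \{\neg\phi(x;b'_{ij}) : j \neq \eta(i)\}$. By consistency of $p_{\eta_0}$ and saturation of $M$, pick $a_{\eta_0} \in M$ realizing $p_{\eta_0}$. For each $\eta$, I will realize $p_\eta(x) \cup \tp(a_{\eta_0}/A)$ inside $M$, which forces $\tp(a_\eta/A) = \tp(a_{\eta_0}/A)$. Consistency of this combined partial type follows from mutual indiscernibility: for each $i$, the sequence $(b'_{ij})_{j \in \Zz}$ is indiscernible over $A$ together with the rest of the array, so the shift $b'_{ij} \mapsto b'_{i, j + \eta(i) - \eta_0(i)}$ extends to an $A$-automorphism $\hat\sigma_i$ of a monster $\Uu \succeq M$ that is the identity off row $i$. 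The composition $\hat\sigma := \hat\sigma_{r-1} \circ \cdots \circ \hat\sigma_0$ is then an $A$-automorphism of $\Uu$ sending $b'_{ij}$ to $b'_{i, j + \eta(i) - \eta_0(i)}$ for all $i, j$, and hence maps $p_{\eta_0}$ term-by-term onto $p_\eta$. Therefore $\hat\sigma(a_{\eta_0})$ realizes both $p_\eta$ and $\tp(a_{\eta_0}/A)$, so the combined type is consistent, and a final appeal to $|A|^+$-saturation of $M$ produces the desired $a_\eta \in M$.

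The delicate point---probably the ``confusion'' alluded to by the author---is verifying cleanly that the $r$ row-shifts extend to $A$-automorphisms of $\Uu$ whose composition really does carry $p_{\eta_0}$ onto $p_\eta$. This is exactly where \emph{full} mutual indiscernibility (each row indiscernible over $A$ \emph{together with all other rows}), rather than merely rowwise indiscernibility over $A$, is essential: one needs each $\hat\sigma_i$ to fix the other rows pointwise so that the computation $\hat\sigma(b'_{ij}) = b'_{i, j + \eta(i) - \eta_0(i)}$ works out as claimed. Once this bookkeeping is in place, the rest of the argument is a routine exercise in applying saturation.
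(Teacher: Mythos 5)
Your proof is correct and follows essentially the same route as the paper: extract a mutually indiscernible array over $A$ preserving the ict-pattern conditions, fix one witness for a base branch, and transport it to all other branches via an $A$-automorphism shifting the rows, so that all witnesses share the same type over $A$. The only cosmetic difference is that the paper upgrades $M$ to be $|A|^+$-strongly homogeneous and applies the shift automorphism inside $M$ directly, whereas you apply it in a monster model and then pull each witness back into $M$ by realizing $p_\eta(x)\cup\tp(a_{\eta_0}/A)$ using $|A|^+$-saturation; both are valid.
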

\begin{proof}
  Well-known, except possibly the final point.  By the saturation
  assumption, we may replace $M$ with a bigger model.  Therefore we
  may assume $M$ is $|A|^+$-strongly homogeneous.  After extracting
  the mutually indiscernible array $\{b'_{ij}\}_{i < r, j \in \Zz}$,
  choose some witness $a_{\vec{0}}$ for the zero function $r \to \Zz$:
  \begin{align*}
    M & \models \Sigma(a_{\vec{0}}) \\ M & \models
    \phi(a_{\vec{0}};b'_{ij}) \iff j = 0.
  \end{align*}
  For any $\eta : r \to \Zz$, choose an automorphism $\sigma_\eta \in
  \Aut(M/A)$ sending $b'_{ij}$ to $b'_{i,j + \eta(i)}$.  Set $a_\eta =
  \sigma_\eta(a_{\vec{0}})$.  Then
  \begin{align*}
    M & \models \Sigma(a_\eta) \\
    M & \models \phi(a_\eta;b'_{i,j+\eta(i)}) \iff j = 0 \\
    M & \models \phi(a_\eta;b'_{i,j}) \iff j = \eta(i).
  \end{align*}
  Thus the $a_\eta$ are witnesses, as desired.  And $a_\eta \equiv_A
  a_{\vec{0}}$ for all $\eta$.
\end{proof}

\begin{lemma}\label{point-of-confusion-1}
  Let $M$ be a structure and $A \subseteq M$ be a subset.  Suppose $M$
  is $|A|^+$-saturated.  Suppose that \emph{in some reduct} $M_0$ of
  $M$,
  \begin{itemize}
  \item There is a partial type $\Sigma(x)$ over $A$.
  \item $\dpr(\Sigma(x)) \ge r$ for some finite $r$.
  \end{itemize}
  Then in $M$ there is an ict-pattern $\{\phi(x;b_{ij})\}_{i < r, j
    \in \Zz}$ and witnesses $\{a_\eta\}_{\eta : r \to \Zz}$ realizing
  $\Sigma(x)$, such that
  \begin{itemize}
  \item The formula $\phi(x;y)$ comes from the reduct language.
  \item The array $b_{ij}$ is mutually indiscernible over $A$,
    \emph{in the expansion}.
  \item The type of $a_\eta$ over $A$ \emph{in the expansion} is
    independent of $\eta$.
  \end{itemize}
\end{lemma}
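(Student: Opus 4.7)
The plan is to imitate the proof of Lemma~\ref{point-of-confusion-0}, with one careful adjustment to bridge the reduct $M_0$ and the expansion $M$. The essential observation is that an ict-pattern is a statement about reduct formulas and a reduct partial type, while mutual indiscernibility and the automorphisms we apply are taken in the expansion; since expansion-indiscernibility is strictly stronger than reduct-indiscernibility, it remains compatible with preserving the pattern. Concretely, $M_0$ inherits $|A|^+$-saturation from $M$, so from $\dpr_{M_0}(\Sigma(x)) \ge r$ I produce an ict-pattern $\{\phi(x;b_{ij})\}_{i<r,\,j\in\Zz}$ of depth $r$ in $\Sigma(x)$ with $\phi(x;y)$ a reduct formula and all parameters lying in $M$. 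As in the earlier lemma, I am free to pass to a bigger $|A|^+$-saturated and $|A|^+$-strongly homogeneous expansion of $M$.

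Working now in the full expansion, I extract by the standard Ramsey/compactness procedure an array $\{b'_{ij}\}_{i<r,\,j\in\Zz}$ that is mutually $A$-indiscernible \emph{in the expansion language}, and whose EM-type over $A$ in the expansion extends that of $\{b_{ij}\}$. The ict-pattern property for the pair $(\phi,\Sigma)$ demands that for each $\eta : r \to \Zz$ the partial type $\Sigma(x) \cup \{\phi(x;b_{i,\eta(i)}) : i < r\} \cup \{\neg\phi(x;b_{i,j}) : i < r,\, j \ne \eta(i)\}$ be consistent. Although $\phi$ and $\Sigma$ belong to the reduct, this consistency is itself a statement about the EM-type of the array over $A$, and is therefore preserved by any EM-extension, in particular by the extraction carried out in the expansion. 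Hence $\{\phi(x;b'_{ij})\}$ is again an ict-pattern of depth $r$ in $\Sigma(x)$.

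Having reached this point, I follow the proof of Lemma~\ref{point-of-confusion-0} verbatim: realize a witness $a_{\vec{0}} \in M$ for the zero function $r \to \Zz$, and for each $\eta : r \to \Zz$ use mutual $A$-indiscernibility of $\{b'_{ij}\}$ in the expansion to obtain $\sigma_\eta \in \Aut(M/A)$ with $\sigma_\eta(b'_{ij}) = b'_{i,j+\eta(i)}$. Then $a_\eta := \sigma_\eta(a_{\vec{0}})$ witnesses $\eta$, realizes the $\Aut(M/A)$-invariant partial type $\Sigma$, and satisfies $\tp(a_\eta/A) = \tp(a_{\vec{0}}/A)$ in the expansion. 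The only genuinely new point compared to Lemma~\ref{point-of-confusion-0} is the compatibility established in the previous paragraph; this is the main (minor) obstacle, and everything else is a direct transcription of the earlier argument.
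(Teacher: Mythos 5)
Your proof is correct and follows essentially the same route as the paper: obtain the ict-pattern in the reduct $M_0$ (which inherits $|A|^+$-saturation), observe that it remains an ict-pattern in the expansion because $\phi$ and $\Sigma$ are still available there, and then run the extraction-plus-automorphism argument of Lemma~\ref{point-of-confusion-0} in the expansion. The only difference is that you inline the proof of Lemma~\ref{point-of-confusion-0} (including the correct observation that consistency of the $\eta$-paths is part of the expansion EM-type over $A$ and hence preserved), whereas the paper simply cites that lemma.
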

\begin{proof}
  The reduct $M_0$ is also $|A|^+$-saturated, so we can find an
  ict-pattern $\{\phi(x;b_{ij})\}_{i < r, j \in \Zz}$ in $M_0$.  In
  particular, the formula $\phi(x;y)$ is from the reduct language.
  Now this ict-pattern continues to be an ict-pattern in the expansion
  $M$, so we can apply Lemma~\ref{point-of-confusion-0} there and
  obtain the desired indiscernibility in the expansion.
\end{proof}

\begin{lemma}\label{point-of-confusion-2}
  Let $\Mm$ be a monster model and $M \preceq M' \preceq \Mm$ be two
  small submodels.  There is a small submodel $N$ containing $M$ with
  the following properties:
  \begin{itemize}
  \item $\tp(N/M')$ is finitely satisfiable in $M$.
  \item The expansion of $N$ by all externally $M'$-definable sets is
    an $|M'|^+$-saturated structure.
  \end{itemize}
\end{lemma}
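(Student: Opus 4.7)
The plan is to build $N$ as the union of an elementary chain $M = N_0 \preceq N_1 \preceq \cdots$ of length $|M'|^+$ of small $L$-elementary submodels of $\Mm$, maintaining throughout the invariant that $\tp(N_\alpha/M')$ is finitely satisfiable in $M$ in $L$. At each successor step $\alpha + 1$, a bookkeeping scheme supplies an $L^{Sh}$-type $p$ over some $A_\alpha \subseteq N_\alpha$ with $|A_\alpha| \le |M'|$ to realize; I translate $p$ to an $L$-type $\tilde p$ over $A_\alpha \cup M'$ via the correspondence $R_{\phi(\cdot, \bar d)}(x) \leftrightarrow \phi(x, \bar d)$, extend $\tilde p$ by compactness to a complete $L$-type $q$ over $N_\alpha \cup M'$ still finitely satisfiable in $N_\alpha$, realize $q$ by some $c \in \Mm$, and let $N_{\alpha+1}$ be any small $L$-elementary submodel of $\Mm$ containing $N_\alpha \cup \{c\}$. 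At limit stages I take unions.

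The two points that need verification are that the extension $q$ exists and that the coheir invariant is preserved. For the first, consistency of $p$ with $\operatorname{Th}(N_\alpha^{Sh})$ unfolds exactly to finite satisfiability of $\tilde p$ in $N_\alpha$ in $L$, and a standard Zorn argument then extends $\tilde p$ to a complete type over $N_\alpha \cup M'$ still finitely satisfiable in $N_\alpha$. For the second, suppose $\phi(\bar b, c, \bar d)$ holds with $\bar b \in N_\alpha$ and $\bar d \in M'$: finite satisfiability of $\tp(c/N_\alpha M')$ in $N_\alpha$ supplies a $c' \in N_\alpha$ with $\phi(\bar b, c', \bar d)$, and then finite satisfiability of $\tp(N_\alpha/M')$ in $M$ applied to the tuple $(\bar b, c') \in N_\alpha$ supplies $(\bar b', c'') \in M$ with $\phi(\bar b', c'', \bar d)$. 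Passing to the elementary closure $N_{\alpha+1}$ is routine, since each new element is the value of a definable function on elements already controlled.

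I expect the main obstacle to be the bookkeeping itself, since the final $N^{Sh}$ must realize every $L^{Sh}$-type over every $A \subseteq N$ of size $\le |M'|$, not just those visible at intermediate stages. The standard remedy is to cycle through all (parameter set, $L^{Sh}$-type) pairs over the eventual $N$ cofinally often along the regular cardinal $|M'|^+$; provided $\Mm$ is saturated enough for $|M'|^+$ to count as small, this yields $N^{Sh}$ with the desired $|M'|^+$-saturation.
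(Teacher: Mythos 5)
The coheir bookkeeping in your chain construction is fine (the transitivity argument for finite satisfiability is correct), but the saturation half has a genuine gap, and it sits exactly where you locate the translation $R_{\phi(\cdot,\bar d)}(x)\leftrightarrow\phi(x,\bar d)$. That dictionary is only valid for \emph{quantifier-free} formulas of the expanded language: an $L^{\mathrm{Sh}}$-formula with quantifiers is evaluated in $N^{\mathrm{Sh}}$ with the quantifiers ranging over $N$, whereas your translated $L$-formula is evaluated in $\Mm$ with quantifiers ranging over $\Mm$, and $N$ equipped with the trace predicates is \emph{not} an elementary substructure of $\Mm$ with constants for $M'$ (e.g.\ $\exists x\,(x=d)$ for $d\in M'\setminus N$ already fails). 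Saturation of $N^{\mathrm{Sh}}$ requires realizing complete types, not just types built from the named predicates, so without a quantifier-elimination statement for the expansion (which is not available here, and certainly not without NIP) your construction only delivers a weak, quantifier-free form of saturation. The same issue undermines the bookkeeping: since the chain $N_0^{\mathrm{Sh}}\preceq N_1^{\mathrm{Sh}}\preceq\cdots$ is not known to be elementary in the expanded language, a type consistent with $\operatorname{Th}(N_\alpha^{\mathrm{Sh}})$ need not be consistent with $\operatorname{Th}(N^{\mathrm{Sh}})$, and an element realizing $p$ at stage $\alpha+1$ need not still realize it in the final model. A smaller but real problem is the closing-off step: an elementary submodel containing $N_\alpha\cup\{c\}$ is not ``generated by definable functions,'' so preserving the coheir invariant when passing to $N_{\alpha+1}$ needs an actual argument (choose $N_{\alpha+1}$ with $\tp(N_{\alpha+1}/M'N_\alpha c)$ finitely satisfiable in $N_\alpha c$ and apply transitivity again).

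The paper avoids all of this by not building $N$ incrementally: it takes an $|M'|^+$-saturated elementary extension $(N,N')$ of the \emph{pair} $(M,M')$ and embeds $N'$ into $\Mm$ over $M'$. Finite satisfiability of $\tp(N/M')$ in $M$ then falls out of elementarity of the pair, and the expansion of $N$ by externally $M'$-definable sets is a reduct of the structure induced on the predicate $N$ by the saturated pair (with the small parameter set $M'$ named), where quantifiers are correctly relativized to $N$; reducts of $|M'|^+$-saturated structures are $|M'|^+$-saturated. If you want to salvage a chain construction, you would have to run it at the level of pairs $(N_\alpha,\Mm)$ or $(N_\alpha,N'_\alpha)$ so that the quantifiers of the expanded language are controlled, at which point the one-step saturated extension of the pair is simpler.
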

\begin{proof}
  Consider the theory of the pair $(M,M')$.  Take an
  $|M'|^+$-saturated elementary extension $(N,N') \succeq (M,M')$.
  Without loss of generality $N'$ is a submodel of $\Mm$.  Now the
  $\tp(N/M')$ is finitely satisfiable in $M$ for standard reasons.
  All the externally $M'$-definable sets in $N$ are definable in the
  structure $(N,N')$, and there are only a small number of them.
\end{proof}

Combining the previous two lemmas yields the following:
\begin{lemma}\label{point-of-confusion-3}
  Let $\Mm$ be a monster model and $M \preceq M' \preceq \Mm$ be two
  small submodels.  Let $\Sigma(x)$ be a partial type over $M$ with
  $\dpr(\Sigma(x)) \ge r$.  Then there is an ict-pattern
  $\{\phi(x;b_{ij})\}_{i < r, j \in \Zz}$ and witnesses
  $\{a_\eta\}_{\eta : r \to \Zz}$ such that
  \begin{itemize}
  \item The $b_{ij}$ are mutually indiscernible over $M'$.
  \item $\tp(a_\eta/M')$ is independent of $\eta$.
  \item $\tp(a_\eta/M')$ and $\tp(b_{ij}/M')$ are finitely satisfiable
    in $M$.
  \end{itemize}
\end{lemma}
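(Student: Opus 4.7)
The plan is to combine the two preceding lemmas in the most direct way. First, apply Lemma~\ref{point-of-confusion-2} to the pair $M \preceq M'$ to obtain a small $N \supseteq M$ such that $\tp(N/M')$ is finitely satisfiable in $M$, and such that the expansion $\tilde N$ of $N$ by all externally $M'$-definable sets is $|M'|^+$-saturated.

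Next, view the original language of $\Mm$ as a reduct of the expanded language of $\tilde N$; call the corresponding reduct structure $N_0$ (on the same universe $N$). Since $\Sigma(x)$ is over $M \subseteq N$ and $N_0$ is $|M'|^+$-saturated as a reduct of $\tilde N$, the hypothesis $\dpr(\Sigma) \ge r$ transfers from $\Mm$ to $N_0$: an ict-pattern of depth $r$ in $\Sigma$ exists in $\Mm$, and by saturation together with $N_0 \preceq \Mm$ it can be realized inside $N_0$. Now apply Lemma~\ref{point-of-confusion-1} to the structure $\tilde N$ with parameter set $A = M'$ and reduct $N_0$. This produces an ict-pattern $\{\phi(x;b_{ij})\}_{i<r, j \in \Zz}$ in the original language together with witnesses $\{a_\eta\}$ realizing $\Sigma(x)$, such that the array $b_{ij}$ is mutually indiscernible over $M'$ in $\tilde N$ and $\tp^{\tilde N}(a_\eta/M')$ does not depend on $\eta$.

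Finally, transfer the conclusions back to $\Mm$. All the $a_\eta$ and $b_{ij}$ lie in $N$, so finite satisfiability of their types over $M'$ in $M$ is inherited from $\tp(N/M')$ by the choice of $N$. Mutual indiscernibility over $M'$ in $\tilde N$ restricts to the same property in the reduct $N_0$ (original language) and hence in $\Mm$ by $N_0 \preceq \Mm$; an identical argument gives that $\tp(a_\eta/M')$ is independent of $\eta$ in $\Mm$. I do not foresee any serious obstacle here, since the content is packaged entirely by Lemmas~\ref{point-of-confusion-1} and \ref{point-of-confusion-2}; the only step requiring a moment's care is the preservation of $\dpr(\Sigma) \ge r$ when descending from $\Mm$ to $N_0$, which holds because $\Sigma$ is over the small set $M$ and $N_0$ is sufficiently saturated.
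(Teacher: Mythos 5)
Your overall plan---combine Lemma~\ref{point-of-confusion-2} with Lemma~\ref{point-of-confusion-1}---is exactly the paper's, but there is a genuine misstep in how you invoke Lemma~\ref{point-of-confusion-1}: you take the parameter set to be $A = M'$ inside the structure $\tilde N$. Lemma~\ref{point-of-confusion-1} requires $A$ to be a \emph{subset} of the structure it is applied to, and $M'$ is not contained in $N$. Indeed it cannot be: $\tp(N/M')$ is finitely satisfiable in $M$, so if some $m' \in M' \setminus M$ lay in $N$, the formula $x = m'$ in $\tp(m'/M')$ would have to be satisfied in $M$. So ``mutually indiscernible over $M'$ in $\tilde N$'' and ``$\tp^{\tilde N}(a_\eta/M')$'' do not literally make sense, and the lemma does not apply as you have set it up.

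The fix---and this is the actual point of expanding $N$ by all externally $M'$-definable sets---is to apply Lemma~\ref{point-of-confusion-1} with $A = M$ (note $|M| \le |M'|$, so $\tilde N$ is $|M|^+$-saturated). This yields an array mutually indiscernible over $M$ \emph{in the expansion} and witnesses whose type over $M$ \emph{in the expansion} is independent of $\eta$. One then observes that any base-language formula with parameters from $M'$, restricted to tuples from $N$, is an externally $M'$-definable set and hence $0$-definable in $\tilde N$; consequently, agreement of expansion-types over $M$ forces agreement of base-language types over $M'$ as computed in $\Mm$. That is how the conclusions over $M'$ are obtained, rather than by treating $M'$ as a parameter set inside $N$. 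Your remaining steps (finite satisfiability over $M'$ inherited from $\tp(N/M')$, and the transfer from $N$ to $\Mm$) are fine, as is your remark that $\dpr(\Sigma) \ge r$ is available in the reduct.
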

\begin{proof}
  Take $N$ containing $M$ as in Lemma~\ref{point-of-confusion-2}.
  Then $\tp(N/M')$ is finitely satisfiable in $M$, and the expansion
  of $N$ by all externally $M'$-definable sets is an
  $|M'|^+$-saturated structure.  Then by
  Lemma~\ref{point-of-confusion-1}, there is an ict-pattern
  $\{\phi(x;b_{ij})\}_{i < r, j \in \Zz}$ and witnesses
  $\{a_\eta\}_{\eta : r \to \Zz}$ such that
  \begin{itemize}
  \item The formula $\phi(x;y)$ comes from the reduct language (the
    usual language).
  \item The array $b_{ij}$ is mutually indiscernible over $M$, in the
    expansion.  This implies that $b_{ij}$ is indiscernible over $M'$
    in the base language.
  \item The type of $a_\eta$ over $M$ \emph{in the expansion} is
    independent of $\eta$.  This implies that $\tp(a_\eta/M')$ is
    independent of $\eta$ in the base language.
  \end{itemize}
  Finally, $\tp(a_\eta/M')$ and $\tp(b_{ij}/M')$ are finitely
  satisfiable in $M$ because $\tp(N/M')$ is finitely satisfiable in
  $M$.
\end{proof}

\begin{proposition}\label{double-independence}
  Let $\Mm$ be a dp-finite monster model.  Let $M$ be a small
  substructure.  Given $a, b \in \Mm$, we can find $a' \equiv_M a$ and
  $b' \equiv_M b$ such that $\dpr(a'b'/M) = \dpr(a'/M) + \dpr(b'/M)$
  and $\tp(b'/Ma')$ is finitely satisfiable in $M$.
\end{proposition}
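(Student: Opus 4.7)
The plan is to take $a' := a$ unchanged and construct $b'$ through a single application of Lemma~\ref{point-of-confusion-3}, then to derive the dp-rank additivity as a consequence of the coheir property.

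I would first apply Lemma~\ref{point-of-confusion-3} to the partial type $\Sigma(y) := \tp(b/M)$ with $r := \dpr(b/M)$ and $M' := Ma$. This furnishes an ict-pattern $\{\phi(y; d_{ij})\}_{i < r,\, j \in \Zz}$ in $\tp(b/M)$ with witnesses $b_\eta$, the $d_{ij}$ mutually indiscernible over $Ma$, and $\tp(b_\eta/Ma)$ both independent of $\eta$ and finitely satisfiable in $M$. Setting $b' := b_{\vec 0}$, one immediately has $b' \equiv_M b$ and $\tp(b'/Ma')$ finitely satisfiable in $M$, which is the coheir condition demanded by the proposition.

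The remaining task is to verify $\dpr(ab'/M) = \dpr(a/M) + \dpr(b/M)$. Subadditivity of dp-rank gives the inequality $\le$. For $\ge$, I would combine the $\phi$-pattern from above with any ict-pattern $\{\psi(x; e_{ij})\}_{i < \dpr(a/M)}$ in $\tp(a/M)$ into a candidate ict-pattern of depth $\dpr(a/M) + \dpr(b/M)$ acting on the pair $(x, y)$. For each $\eta = (\eta_a, \eta_b)$, constancy of $\tp(b_\eta/Ma)$ yields $(a, b_{\eta_b}) \equiv_M (a, b')$; it therefore suffices to produce an $\eta_a$-witness $a_{\eta_a}$ for the $\psi$-pattern also satisfying $a_{\eta_a} \equiv_{M b_{\eta_b}} a$, whereupon $(a_{\eta_a}, b_{\eta_b}) \equiv_M (a, b')$ realizes the combined pattern.

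The main obstacle will thus be to verify $\dpr(a/Mb_{\eta_b}) = \dpr(a/M)$, which furnishes the sought witness $a_{\eta_a}$. This follows from the standard NIP principle that dp-rank is preserved under coheir extensions: since $\tp(b_{\eta_b}/Ma)$ is finitely satisfiable in $M$, symmetry of invariant independence in NIP implies $\tp(a/Mb_{\eta_b})$ extends to a global $M$-invariant type, and dp-rank is constant along $M$-invariant extensions.
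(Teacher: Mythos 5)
Your overall architecture matches the paper's: build one ict-pattern for each of $a$ and $b$, use coheir independence to show that the types over $M$ of the combined witnesses are constant, and read off a depth-$(r+s)$ pattern in that constant type. But there is a genuine gap at the step where you produce witnesses for the combined pattern. You apply Lemma~\ref{point-of-confusion-3} with base $M' = Ma$ only (set aside that $Ma$ is not a model; that is easily repaired), so the only independence available is that $\tp(b_{\eta_b}/Ma)$ is finitely satisfiable in $M$. To realize position $(\eta_a,\eta_b)$ of the combined pattern you need an element $a^*$ with $a^* \equiv_{M b_{\eta_b}} a$ that \emph{also} satisfies $\models \psi(a^*;e_{ij}) \iff j=\eta_a(i)$ for the \emph{fixed} parameters $e_{ij}$ chosen in advance. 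Knowing $\dpr(a/Mb_{\eta_b}) = \dpr(a/M)$ does not give you this: it only yields \emph{some} ict-pattern of that depth in $\tp(a/Mb_{\eta_b})$, whose formulas and parameters may vary with $\eta_b$, whereas a depth-$(r+s)$ pattern requires one array of formulas-with-parameters working for all $(\eta_a,\eta_b)$ simultaneously. Moreover, the ``standard NIP principle'' you invoke is doubly suspect: coheir independence is not symmetric in NIP, so finite satisfiability of $\tp(b_{\eta_b}/Ma)$ in $M$ does not hand you an $M$-invariant global extension of $\tp(a/Mb_{\eta_b})$; and preservation of dp-rank under such extensions is essentially what the proposition is engineered to deliver, so invoking it here is close to circular.

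The repair is exactly the paper's reordering: fix the $a$-side pattern \emph{first}, with parameters $c_{ij}$ and a full family of witnesses $a_\eta \models \tp(a/M)$, put all of these into a small model $M'$, and only then apply Lemma~\ref{point-of-confusion-3} over $M'$ to get the $b$-side pattern with $\tp(b_{\eta'}/M')$ constant in $\eta'$ and finitely satisfiable in $M$. Then $a_\eta b_{\eta'} \equiv_M a_{\vec{0}} b_{\eta'} \equiv_M a_{\vec{0}} b_{\vec{0}}$, because the invariance is now over a set containing both $a_\eta$ and $a_{\vec{0}}$, and the combined array is witnessed by the pairs $a_\eta b_{\eta'}$. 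The price is that $a$ is replaced by $a_{\vec{0}} \equiv_M a$; if you insist on keeping $a' = a$, conjugate the whole $a$-side configuration by an automorphism over $M$ sending $a_{\vec{0}}$ to $a$ before choosing $M'$. As written, your argument cannot be completed without this coordination between the two patterns.
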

\begin{proof}
  Let $r = \dpr(a/M)$ and $s = \dpr(b/M)$.  Extract an ict-pattern
  $\{\varphi(x;c_{ij})\}_{i < r, j \in \Zz}$ in $\tp(a/M)$, mutually
  indiscernible over $M$.  Choose witnesses $\{a_\eta\}_{\eta : r \to
    \Zz}$ realizing $\tp(a/M)$, i.e.,
  \begin{equation*}
    j = \eta(i) \iff \models \varphi(a_\eta;c_{ij}).
  \end{equation*}
  Let $M'$ be a small model containing $M$, the $c_{ij}$, and the
  $a_\eta$.  By Lemma~\ref{point-of-confusion-3}, there is an
  ict-pattern $\{\psi(y;d_{ij})\}_{i < s, j \in \Zz}$ in the type
  $s(y)$, and witnesses $\{b_\eta\}_{\eta : s \to \Zz}$, such that
  \begin{itemize}
  \item The array $\{d_{ij}\}_{i < s, j \in \Zz}$ is mutually
    indiscernible over $M'$.
  \item $\tp(b_\eta/M')$ is independent of $\eta$, and finitely
    satisfiable in $M'$.
  \end{itemize}

  Now for any $\eta : r \to \Zz$ and $\eta' : s \to \Zz$, we have
  \begin{equation*}
    a_\eta b_{\eta'} \equiv_M a_{\vec{0}} b_{\eta'} \equiv_M
    a_{\vec{0}} b_{\vec{0}}.
  \end{equation*}
  The first $\equiv$ holds because
  \begin{equation*}
    \tp(a_\eta/M) = \tp(a/M) = \tp(a_{\vec{0}}/M)
  \end{equation*}
  and $\tp(b_{\eta'}/Ma_\eta a_{\vec{0}})$ is finitely satisfiable in
  $M$.  The second $\equiv$ holds because $Ma_{\vec{0}} \subseteq M'$ and
  \begin{equation*}
    \tp(b_{\eta'}/M') = \tp(b_{\vec{0}}/M').
  \end{equation*}
  Therefore $\tp(a_\eta b_{\eta'}/M)$ is independent of $\eta$ and
  $\eta'$.  Call this type $q(x,y)$.  There is an ict-pattern of depth
  $r + s$ in the complete type $q(x,y)$ given by the following array
  of formulas:
  \begin{align*}
    &\varphi(x,c_{1,1}), ~\varphi(x,c_{1,2}),~ \ldots \\
    & \ldots \\
    & \varphi(x,c_{r,1}),~\varphi(x,c_{r,2}),~ \ldots \\
    &\psi(y,d_{1,1}), ~\psi(y,d_{1,2}),~ \ldots \\
    & \ldots \\
    & \psi(y,d_{s,1}),~\psi(y,d_{s,2}),~ \ldots
  \end{align*}
  Indeed, the $a_\eta b_{\eta'}$ witness that this is an ict-pattern.
  Thus $q(x,y)$ has dp-rank at least $r + s$.  Take $a'b' \models q |
  M$.  Then $a' \equiv_M a$ and $b' \equiv_M b$, and $\dpr(a'b'/M) = r
  + s$.
\end{proof}

\section{Riddles answered} \label{last-of-multy}
In this section, we show that heavy sets are exactly sets of full
rank, the additive infinitesimals agree with the multiplicative
infinitesimals, Sinclair's ``Johnson topology'' exists and agrees with
the canonical topology, and the canonical topology is a field
topology.  As in \S \ref{first-of-multy}, $\Kk$ will be a sufficiently
saturated unstable dp-finite field.

We will make use of the following facts which were implicit in \cite{prdf}.
\begin{theorem} \label{proto-shrink}
  Let $Q_1, \ldots, Q_n$ be quasi-minimal sets and let $P \subseteq
  Q_1 \times \cdots \times Q_n$ have full rank
  \begin{equation*}
    \dpr(P) = \dpr(Q_1 \times \cdots \times Q_n).
  \end{equation*}
  Then there are smaller quasi-minimal sets $Q_i' \subseteq Q_i$ such
  that
  \begin{align*}
    \dpr((Q_1' \times \cdots \times Q_n') \cap P) &= \dpr(Q_1 \times
    \cdots \times Q_n) \\
    \dpr((Q_1' \times \cdots \times Q_n') \setminus P) &< \dpr(Q_1
    \times \cdots \times Q_n).
  \end{align*}
\end{theorem}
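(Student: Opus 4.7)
My strategy is to locate a tuple $(a_1,\ldots,a_n) \in P$ that is maximally generic in the product, and then to shrink each $Q_i$ to a quasi-minimal $Q_i' \ni a_i$ whose unique generic type over a suitable base realises $a_i$. Any full-rank tuple of $Q_1' \times \cdots \times Q_n'$ will then be $\equiv a$ over that base, and hence must land in $P$.

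First, I would construct the generic tuple. Fix a small model $K$ over which $P$ and each $Q_i$ are defined, write $r_i = \dpr(Q_i)$ and $r = \sum_i r_i$, and realise the generic type of $P$ over $K$ to obtain $(a_1,\ldots,a_n) \in P$ with $\dpr(a_1\cdots a_n / K) = r$. Next I would iterate Proposition \ref{double-independence} across the $n$ coordinates (grouping them two at a time and then enlarging) to replace the tuple by a $K$-conjugate one for which the $a_i$ are mutually coheir-independent over $K$ while the total dp-rank remains $r$. Sub-additivity of dp-rank then forces $\dpr(a_i / K \cup \{a_j : j \ne i\}) = r_i$, so each $a_i$ is generic in $Q_i$ over this larger base.

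Now let $K_1 \succeq K$ be a small model containing $a_1,\ldots,a_n$. For each $i$, I would use a compactness-and-shrinking argument inside $Q_i$ to produce a $K_1$-definable $Q_i' \subseteq Q_i$ containing $a_i$ such that $Q_i'$ is still quasi-minimal and $\tp(a_i / K_1)$ is the unique type of full dp-rank in $Q_i'$ over $K_1$. Concretely, whenever a $K_1$-definable $Y \subseteq Q_i'$ not containing $a_i$ has full rank in the current $Q_i'$, remove it; quasi-minimality of the current $Q_i'$ guarantees that the complement remains of full rank, and dp-finiteness plus compactness force the procedure to stabilise on a quasi-minimal set.

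Finally, $(Q_1' \times \cdots \times Q_n') \cap P$ has dp-rank $r$ because it contains $(a_1,\ldots,a_n)$. For the complement, if $(Q_1' \times \cdots \times Q_n') \setminus P$ had full rank $r$, one more application of Proposition \ref{double-independence} inside this set over $K_1$ would yield $(b_1,\ldots,b_n) \notin P$ with the $b_i$ mutually coheir-independent over $K_1$ and each generic in $Q_i'$ over $K_1$. Uniqueness of the generic in each $Q_i'$ gives $b_i \equiv_{K_1} a_i$, and coheir-independence upgrades this to $(b_1,\ldots,b_n) \equiv_{K_1} (a_1,\ldots,a_n)$; $K_1$-definability of $P$ then forces $(b_1,\ldots,b_n) \in P$, a contradiction. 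The main obstacle I foresee is the shrinking step: producing $Q_i'$ that is quasi-minimal over the enlarged base $K_1$ with a uniquely determined generic. The passage from $K$ (over which $Q_i$ is quasi-minimal by hypothesis) to $K_1$ (which contains the generic parameters $a_j$ for $j \ne i$) can in principle introduce new splittings of $Q_i$ into two full-rank pieces, so a careful compactness argument is required to check that the shrinking terminates at a quasi-minimal object; once this machinery is in place, the remaining steps are routine genericity manipulations.
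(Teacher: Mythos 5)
There is a genuine gap, and it is exactly at the step you flag as ``the main obstacle'': the shrinking procedure you describe cannot terminate at a definable set. In a quasi-minimal set $Q_i$, \emph{every} infinite definable subset has full rank, so your rule ``remove any $K_1$-definable full-rank $Y$ not containing $a_i$'' removes every infinite definable subset avoiding $a_i$; what survives is the set of realisations of $\tp(a_i/K_1)$ (up to finite noise), which is only type-definable. There is no definable $Q_i'$ with a unique full-rank type over a model in general --- think of $\Zz_p$ inside $\Qq_p$ (dp-rank $1$, quasi-minimal): over any model there are many rank-$1$ types, one for each nested chain of balls, and no definable ball isolates one of them. Compactness does not rescue this; it is precisely why the paper retreats to a weaker conclusion, namely that the complement of $P$ in the shrunken box can be made \emph{narrow} (a ``hyperplane'' in the sense of Theorem~3.10 of \cite{prdf}) rather than concentrated away from a single generic. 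The paper's proof is purely combinatorial: broadness of $P$ (Theorem~3.23 of \cite{prdf}) plus the hyperplane-shrinking Theorem~3.10 give $Q_i'$ with $(Q_1'\times\cdots\times Q_n')\setminus P$ narrow, hence of smaller rank, and quasi-minimality only enters to see that $\dpr(Q_i')=\dpr(Q_i)$.

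A second, independent problem is the final contradiction. Even granting $b_i \equiv_{K_1} a_i$ for each $i$ and mutual coheir-independence of both tuples, you cannot conclude $(b_1,\ldots,b_n)\equiv_{K_1}(a_1,\ldots,a_n)$: coheir extensions of a type over a model are not unique in NIP theories (already in a dense linear order, two increasing elements above the model and two decreasing ones are both coheir-independent pairs with the same coordinatewise types but different pair types). Uniqueness of the nonforking/coheir extension is what makes this argument work in the stable or generically stable setting, and it is not available here. Both gaps point the same way: the statement genuinely requires the broad/narrow machinery of \cite{prdf} \S 3 (or something equivalent), not a generic-point argument.
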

\begin{proof}
  If $n = 1$, take $Q_1' = P$.  Then $P$ is infinite by \cite{prdf},
  Remark~3.2, and so $\dpr(Q_1') = \dpr(Q_1)$ by definition of
  quasi-minimality.

  Assume therefore that $n > 1$.  By \cite{prdf}, Theorem~3.23, the
  set $P$ is broad in $Q_1 \times \cdots \times Q_n$.  By \cite{prdf},
  Theorem~3.10, there exist infinite definable subsets $Q_i' \subseteq
  Q_i$ such that the set
  \begin{equation*}
    H = (Q_1' \times \cdots \times Q_n') \setminus P
  \end{equation*}
  is a ``hyperplane,'' in the sense that for every $b \in Q_n'$, the
  set
  \begin{equation*}
    \{(a_1,\ldots,a_{n-1}) \in Q_1' \times \cdots \times Q_{n-1}' ~|~
    (a_1,\ldots,a_{n-1},b) \notin P\}
  \end{equation*}
  is narrow in $Q_1' \times \cdots \times Q_{n-1}'$.  By the
  contrapositive of \cite{prdf}, Lemma~3.8.1, $H$ is narrow in $Q_1'
  \times \cdots \times Q_n'$.  By \cite{prdf}, Theorem~3.23,
  \begin{equation*}
    \dpr((Q_1' \times \cdots \times Q_n') \setminus P) = \dpr(H) <
    \dpr(Q_1') + \cdots + \dpr(Q_n') = \dpr(Q_1' \times \cdots \times
    Q_n').
  \end{equation*}
  Because of how dp-rank behaves in unions, it follows that
  \begin{equation*}
    \dpr((Q_1' \times \cdots \times Q_n') \cap P) = \dpr(Q_1' \times
    \cdots \times Q_n').
  \end{equation*}
  Meanwhile, by definition of quasi-minimality, $\dpr(Q_i') =
  \dpr(Q_i)$, and so
  \begin{align*}
    \dpr(Q_1' \times \cdots \times Q_n') & = \dpr(Q_1') + \cdots +
    \dpr(Q_n') \\ & = \dpr(Q_1) + \cdots + \dpr(Q_n) \\ & = \dpr(Q_1 \times
    \cdots \times Q_n). \qedhere
  \end{align*}
\end{proof}
Recall the notion of coordinate configuration, critical rank, critical
sets, and heavy sets from \cite{prdf} (Definitions~4.1, 4.7, and
4.19).  We will use $\rho$ to denote the critical rank.

If $W$ is critical, then $\dpr(W) = \rho$ by definition of critical
set and by \cite{prdf}, Remark~4.2.  Note that any critical set $W$ is
heavy, as it is trivially $W$-heavy (\cite{prdf}, Definition~4.16).

\begin{corollary}\label{shrinking}
  Let $(Q_1,\ldots,Q_n,P)$ be a coordinate configuration of rank $r$.
  Then there is a coordinate configuration $(Q_1',\ldots,Q_n',P')$
  such that
  \begin{enumerate}
  \item $(Q_1',\ldots,Q_n',P')$ also has rank $r$.
  \item Each $Q_i'$ is a subset of $Q_i$.
  \item $P' = P \cap (Q_1' \times \cdots \times Q_n')$.
  \item The complement $(Q_1' \times \cdots \times Q_n') \setminus P'$
    is narrow, so
    \begin{equation*}
      \dpr((Q_1' \times \cdots \times Q_n') \setminus P') < r
    \end{equation*}
  \end{enumerate}
\end{corollary}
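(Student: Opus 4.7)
The plan is to obtain the corollary as an essentially immediate application of Theorem~\ref{proto-shrink}, with a small verification that the refined data still satisfies the definition of coordinate configuration. The role of Theorem~\ref{proto-shrink} is exactly to let us shrink the quasi-minimal factors so that a full-rank subset becomes a ``cofinal'' subset modulo lower dp-rank error; what we need here is just to rephrase this in terms of coordinate configurations and narrowness.

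First I would unpack the hypothesis: by the definition of a coordinate configuration of rank $r$ (\cite{prdf}, Definition~4.1), the $Q_i$ are quasi-minimal and $P \subseteq Q_1 \times \cdots \times Q_n$ has full dp-rank $r = \dpr(Q_1 \times \cdots \times Q_n)$. This is precisely the input required by Theorem~\ref{proto-shrink}. Applying that theorem, we obtain quasi-minimal sets $Q_i' \subseteq Q_i$ such that
\begin{align*}
  \dpr((Q_1' \times \cdots \times Q_n') \cap P) &= r, \\
  \dpr((Q_1' \times \cdots \times Q_n') \setminus P) &< r.
\end{align*}
Set $P' := P \cap (Q_1' \times \cdots \times Q_n')$; items (2) and (3) of the conclusion are built into this choice.

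To check that $(Q_1',\ldots,Q_n',P')$ is again a coordinate configuration of rank $r$, observe that $\dpr(Q_i') = \dpr(Q_i)$ by quasi-minimality, hence $\dpr(Q_1' \times \cdots \times Q_n') = r$ by sub-additivity of dp-rank. The first displayed equation above then says $P'$ has full rank inside $Q_1' \times \cdots \times Q_n'$, which gives (1). For (4), I would invoke \cite{prdf}, Theorem~3.23 in the contrapositive form used in the proof of Theorem~\ref{proto-shrink}: a broad subset of a product of quasi-minimal sets has full dp-rank. Since the complement has dp-rank strictly less than $r$, it must be narrow.

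The argument is really just bookkeeping on top of Theorem~\ref{proto-shrink}, so I do not anticipate a serious obstacle. The only thing that might require care is confirming that the definition of coordinate configuration (from \cite{prdf}) has no extra hypothesis beyond quasi-minimality of the $Q_i$ and fullness of $\dpr(P)$; if it does carry an additional clause (for example a definability or normalization condition on $P$), then that clause will need to be verified for $P'$ as well, but since $P'$ is obtained from $P$ by intersecting with a definable box $Q_1' \times \cdots \times Q_n'$, any such clause should transfer without issue.
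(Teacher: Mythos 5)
Your proposal is correct and is exactly the argument the paper intends: the corollary is stated with no proof precisely because it is the direct application of Theorem~\ref{proto-shrink} that you describe, with $P' := P \cap (Q_1' \times \cdots \times Q_n')$ and narrowness of the complement coming from the equivalence of ``broad'' and ``full rank'' (\cite{prdf}, Theorem~3.23). Your closing caveat is also handled correctly --- the extra clause in \cite{prdf}, Definition~4.1 (injectivity of the coordinate-sum map on $P$) passes to the subset $P'$ automatically, as you anticipated.
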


\subsection{Near interior}
In the classification of dp-minimal fields, an important step was that
infinite definable sets have non-empty interior (\cite{arxiv-myself}
Proposition 4.12).  In terms of infinitesimals, this says:
\begin{quote}
  If $X$ is an infinite $K$-definable set, there is $a \in X(K)$ such
  that $a + I_K \subseteq X$.
\end{quote}
This was used to show that $1 + I_K$ is the group of multiplicative
infinitesimals.\footnote{See Claim 4.13 in \cite{arxiv-myself} for the
  implication $1 + I_K \subseteq U_K$.  The other inclusion was
  omitted from \cite{arxiv-myself}, but was proven in earlier drafts
  using the method of Theorem~\ref{mult-1}.\ref{interesting} above.}

The analogue here would replace ``infinite'' with ``heavy.''  In this
section, we prove a weaker version, Proposition~\ref{where-we-are-going},
which provides an $a \in K$ such that
\begin{equation*}
  \left(\epsilon \in I_K \text{ and } \dpr(\epsilon/K) \ge \rho
  \right) \implies a + \epsilon \in X.
\end{equation*}
So instead of ensuring that \emph{all} points near $a$ are in $X$, the
lemma ensures that \emph{most} points near $a$ are in $X$.

\begin{lemma} \label{inject}
  Let $X \subseteq \Kk$ be a heavy definable set.  Then there is a
  critical set $W$ and a $\delta \in \Kk$ such that
  \begin{equation*}
    \dpr(\{(x,y) \in W \times W ~|~ x - y \notin X + \delta\}) < \dpr(W
    \times W) = 2 \rho.
  \end{equation*}
\end{lemma}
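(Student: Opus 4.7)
The plan is a fiber argument: compute the dp-rank of a well-chosen total set, use the fiber-dimension inequality to extract a full-rank fiber, and then use the unique-generic property of products of quasi-minimal sets to convert ``full rank'' into ``narrow complement''.

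Fix any critical set $W$ (for example, the first coordinate of a coordinate configuration of rank $\rho$ witnessing the heaviness of $X$, shrunken via Corollary~\ref{shrinking} so that the complement of $P$ is narrow). Consider the definable set
\[
R := \{(x, y, \delta) \in W \times W \times \Kk : x - y - \delta \in X\}.
\]
The map $(x, y, z) \mapsto (x, y, x - y - z)$ is a definable bijection $W \times W \times X \to R$, so $\dpr(R) = \dpr(W \times W \times X) = 2\rho + \dpr(X) = 3\rho$, using additivity of dp-rank under products together with the easy direction of the heavy-iff-full-rank theorem, namely $\dpr(X) = \rho$ (which follows directly from the coordinate-configuration definition of heavy in \cite{prdf} \S 4). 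The additivity is secured by Proposition~\ref{double-independence}, which supplies simultaneously coheir- and dp-rank-independent realizations of the three generic types.

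Project $R$ onto the $\delta$-coordinate: since $\pi_\delta(R) \subseteq \Kk$ has rank at most $\rho$, the fiber-dimension inequality yields some $\delta \in \Kk$ with
\[
\dpr(R_\delta) = \dpr(\{(x, y) \in W^2 : x - y \in X + \delta\}) \geq 3\rho - \rho = 2\rho.
\]
Since $R_\delta \subseteq W \times W$, equality holds. Now invoke quasi-minimality of the critical set $W$: the product $W \times W$ has a \emph{unique} generic type of rank $2\rho$ over any small base (in particular over $K\delta$), namely the independent product of the generic type of $W$ with itself. A full-rank definable subset of $W \times W$ must contain this unique generic type, so $R_\delta$ does; consequently the complement $\{(x,y) \in W^2 : x - y \notin X + \delta\}$ omits the generic type and has dp-rank strictly below $2\rho$, which is the required conclusion.

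The main obstacle is justifying the rank additivity $\dpr(W \times W \times X) = 2\rho + \dpr(X)$: we need to arrange generic points of $W$, $W$, and $X$ into a joint configuration of dp-rank exactly $3\rho$. This is where Proposition~\ref{double-independence} does its work, applied once to the pair $(W,W)$ and again to the resulting pair and $X$. Everything else --- the fiber-dimension computation, the uniqueness of the rank-$2\rho$ type in $W \times W$ over $K\delta$, and the step from ``full rank'' to ``narrow complement'' --- is routine once this additivity is in hand.
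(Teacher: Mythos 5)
There are two genuine gaps here, and the first is a circularity. Your fiber computation needs $\dpr(R_\delta) \ge 2\rho$ for some $\delta$, which via sub-additivity requires $\dpr(R) - \dpr(\pi_\delta(R)) \ge 2\rho$, i.e.\ $\dpr(X) \ge \dpr(\pi_\delta(R))$. You assert both that $\dpr(X) = \rho$ for heavy $X$ and that $\pi_\delta(R) \subseteq \Kk$ has rank at most $\rho$. Neither is available at this point: what is known about a heavy set is only $\dpr(X) \ge \rho$ (it contains a critical set), and what is known about $\Kk$ is only $\dpr(\Kk) \ge \rho$. The equality $\rho = \dpr(\Kk)$ --- equivalently, that every subset of $\Kk$ has rank at most $\rho$, and that heavy sets have rank exactly $\rho$ --- is precisely Theorem~\ref{new-mult-1}, whose proof passes through Proposition~\ref{old-mult-1}, Proposition~\ref{where-we-are-going}, Lemma~\ref{inject-2}, and hence through the present lemma. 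With only the inequalities actually available, the fiber bound gives $\dpr(R_\delta) \ge 2\rho - (\dpr(\Kk) - \dpr(X))$, which need not reach $2\rho$.

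The second gap is the ``unique generic'' step. Quasi-minimality in \cite{prdf} means that every infinite definable subset of $Q$ has full rank; it does not give $Q$, let alone $Q \times Q$ or a critical set $W \times W$, a unique full-rank type, and it is simply false in general that a full-rank definable subset of $W \times W$ has narrow complement (think of $\{(x,y) : v(x-y) > \gamma\}$ and its complement inside the square of a ball in a dp-minimal valued field: both pieces have full rank). This failure is exactly why Theorem~\ref{proto-shrink} is stated the way it is: one only gets a narrow complement \emph{after shrinking the quasi-minimal factors}. Accordingly, the paper's proof cannot take $W$ to be an arbitrary critical set fixed in advance; it works upstairs in $\prod_i Q_i \times \prod_i Q_i$, uses Corollary~4.15 of \cite{prdf} to produce $\delta$ making the relevant set broad, applies Theorem~\ref{proto-shrink} to shrink the two factors to $\prod_i Q_i'$ and $\prod_i Q_i''$ \emph{separately}, pushes down to the two resulting critical targets $Y'$ and $Y''$, and only then translates and intersects (via Theorem~4.20.8 of \cite{prdf}) to obtain a single critical $W$ with the stated property. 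Both the choice of $\delta$ and the choice of $W$ have to be made together; neither your fixed-$W$-first order of quantifiers nor the generic-type argument survives.
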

The idea here is that $x - y \in X + \delta$ for ``almost all'' $(x,y)
\in W \times W$.
\begin{proof}
  Let $(Q_1,\ldots,Q_n,P)$ be a critical coordinate configuration.
  Shrinking the $Q_i$ (Corollary~\ref{shrinking}), we may assume that
  \begin{align*}
    \dpr((Q_1 \times \cdots \times Q_n) \setminus P) &< \rho \\
    \dpr((Q_1 \times \cdots \times Q_n) \cap P) & = \rho.
  \end{align*}
  By \cite{prdf}, Corollary~4.15, we may find a $\delta \in \Kk$ such
  that
  \begin{equation*}
    \{ (\vec{x},\vec{y}) \in \prod_i Q_i \times \prod_i Q_i ~|~ \sum
    \vec{x} - \sum \vec{y} \in X + \delta\}
  \end{equation*}
  is broad as a subset of $\prod_i Q_i \times \prod_i Q_i$.  By
  Theorem~\ref{proto-shrink} we may find infinite definable $Q_i',
  Q_i'' \subseteq Q_i$ such that
  \begin{equation*}
    \{ (\vec{x},\vec{y}) \in \prod_i Q'_i \times \prod_i Q''_i ~|~
    \sum \vec{x} - \sum \vec{y} \notin X + \delta\}
  \end{equation*}
  is narrow as a subset of $\prod_i Q'_i \times \prod_i Q''_i$.  In
  particular,
  \begin{equation*}
    \dpr( \{ (\vec{x},\vec{y}) \in \prod_i Q'_i \times \prod_i Q''_i
    ~|~ \sum \vec{x} - \sum \vec{y} \notin X + \delta\}) < 2 \rho.
  \end{equation*}
  Let
  \begin{align*}
    P' & = P \cap (Q_1' \times \cdots \times Q_n') \\
    P'' & = P \cap (Q_1'' \times \cdots \times Q_n'').
  \end{align*}
  By choice of the $Q'_i$ and $Q''_i$, the sets $P'$ and $P''$ are broad, so
  $(Q_1',\ldots,Q_n',P')$ and $(Q_1'',\ldots,Q_n'',P'')$ are both
  critical coordinate configurations.  Let $Y'$ and $Y''$ be the
  respective targets.  Note that
  \begin{equation*}
    \{ (\vec{x},\vec{y}) \in P' \times P'' ~|~ \sum \vec{x} - \sum
    \vec{y} \notin X + \delta\} \subseteq \{ (\vec{x},\vec{y}) \in
    \prod_i Q'_i \times \prod_i Q''_i ~|~ \sum \vec{x} - \sum \vec{y}
    \notin X + \delta\}
  \end{equation*}
  and so
  \begin{equation*}
    \dpr( \{ (\vec{x},\vec{y}) \in P' \times P'' ~|~ \sum \vec{x} -
    \sum \vec{y} \notin X + \delta\}) < 2 \rho.
  \end{equation*}
  As $Y'$ and $Y''$ are the images of $P'$ and $P''$ under the maps
  $\vec{x} \mapsto \sum \vec{x}$, it follows that
  \begin{equation*}
    \dpr( \{(x,y) \in Y' \times Y'' ~|~ x - y \notin X + \delta\}) < 2
    \rho.
  \end{equation*}
  As $Y'$ and $Y''$ are the targets of critical coordinate
  configurations, each is a critical set, hence heavy.  By
  \cite{prdf}, Theorem~4.20.8, there is $\tau \in \Kk$ such that
  \begin{equation*}
    W := Y' \cap (Y'' + \tau)
  \end{equation*}
  is heavy.  Then
  \begin{equation*}
    \dpr( \{(x,y) \in W \times W ~|~ x - y + \tau \notin X + \delta
    \}) < 2 \rho.
  \end{equation*}
  Note that $W \subseteq Y'$, so
  \begin{equation*}
    \dpr(W) \le \dpr(Y') = \rho.
  \end{equation*}
  On the other hand, $W$ is heavy, so $\dpr(W) \ge \rho$.  Therefore
  $\dpr(W) = \rho$, so by \cite{prdf}, Remark~4.8 the set $W$ is
  critical.  And
  \begin{equation*}
    \dpr( \{(x,y) \in W \times W ~|~ x - y \notin X + \delta' \}) <
    \dpr(W \times W).
  \end{equation*}
  for $\delta' = \delta - \tau$.
\end{proof}
We need a slightly stronger version of Lemma~\ref{inject}, controlling
the field of definition of $W$ and $\delta$:
\begin{lemma} \label{inject-2}
  Let $K$ be a small model defining a critical coordinate
  configuration.  Let $X$ be heavy and $K$-definable.  Then there is a
  $K$-definable critical set $W$ and a $\delta \in K$ such that
  \begin{equation*}
    \dpr(\{(x,y) \in W \times W ~|~ x - y \notin X + \delta\}) < \dpr(W
    \times W) = 2 \rho.
  \end{equation*}
\end{lemma}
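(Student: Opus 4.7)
The plan is to replay the proof of Lemma~\ref{inject} essentially verbatim, taking care that every existential choice made there can be realized over $K$. The assumption that $K$ defines a critical coordinate configuration lets us start with $(Q_1,\ldots,Q_n,P)$ all $K$-definable. Corollary~\ref{shrinking} can then be applied over $K$: its conclusion asserts the existence of a shrinkage, and the condition of being a valid shrinkage (critical coordinate configuration of rank $\rho$ with narrow complement) is $K$-definable in the parameters defining the $Q_i'$, using definability-in-families of dp-rank bounds from \cite{prdf}. Since a valid shrinkage exists in $\Kk$ and $K \preceq \Kk$, we may take the $Q_i'$ to be $K$-definable.

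The two steps where new parameters really enter are the appeal to \cite{prdf}, Corollary~4.15 (to produce $\delta$ making the sum-difference set broad) and \cite{prdf}, Theorem~4.20.8 (to produce $\tau$ making $Y' \cap (Y'' + \tau)$ heavy). In each case the set of valid witnesses is $K$-definable --- by definability of broadness in families for the first, and by definability of heaviness in families (\cite{prdf}, Theorem~4.20.4) for the second --- and is non-empty by the cited result applied inside $\Kk$. Elementarity of $K \preceq \Kk$ then supplies $\delta, \tau \in K$. The intermediate appeal to Theorem~\ref{proto-shrink} to produce $Q_i', Q_i''$ is similar: once $\delta \in K$ is fixed, the valid pairs of shrinkings form a $K$-definable set, non-empty in $\Kk$, hence non-empty in $K$.

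Setting $W = Y' \cap (Y'' + \tau)$ and $\delta' = \delta - \tau$, the set $W$ is $K$-definable and $\delta' \in K$, and the dp-rank inequality is obtained by exactly the calculation at the end of the proof of Lemma~\ref{inject}. The only obstacle is bookkeeping: one must verify at each existential step that the witness set is $K$-definable and not merely $\Kk$-definable, which amounts to citing the various definability-in-families results from \cite{prdf}. Once that is in hand, elementarity of $K \preceq \Kk$ does all the real work, and no new ideas beyond those already used to prove Lemma~\ref{inject} are required.
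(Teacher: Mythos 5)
Your proposal is correct in substance and rests on the same mechanism as the paper's proof: express the witness condition by a $K$-definable formula and use $K \preceq \Kk$ to find a witness in $K$. The organization is different, though. You replay Lemma~\ref{inject} step by step, definabilizing each existential choice as you go; the paper instead uses Lemma~\ref{inject} as a black box to get \emph{some} witness $(W,\delta_0)$ over arbitrary parameters, then translates $W$ (via Remark~4.9 and Proposition~4.18 of \cite{prdf}) so that $W \cap Y$ lies inside the fixed $K$-definable target $Y$, and finally definabilizes only the \emph{final} witness condition in one shot. The point of that translation step is the one place where your write-up is too casual: there is no general ``definability-in-families of dp-rank bounds'' in \cite{prdf} --- dp-rank is not definable in families. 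What is available is definability of broadness (equivalently, of full rank) for definable families of subsets of a \emph{fixed} product of quasi-minimal sets (\cite{prdf}, Corollary~3.24), which the paper upgrades to subsets of $Y \times Y$ via the finite-to-one covering $P \times P \twoheadrightarrow Y \times Y$ (its Claim on square-definability). Your argument still goes through, because after the first shrinkage every rank condition you need to definabilize concerns subsets of the fixed $K$-definable products $\prod_i Q_i \times \prod_i Q_i$ or of $Y' \times Y''$, so Corollary~3.24 (plus Proposition~4.3 for the coordinate-configuration conditions and Theorem~4.20.4 for heaviness) covers each step --- but you should route every such condition explicitly through those results rather than through a blanket appeal to definability of dp-rank. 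With that caveat addressed, both routes are valid; the paper's buys a shorter argument at the price of the translation trick, while yours avoids the translation at the price of re-auditing the proof of Lemma~\ref{inject}.
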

\begin{proof}
  Let $(Q_1,\ldots,Q_n,P)$ be some $K$-definable coordinate
  configuration with target $Y$.  Note that $\dpr(Y \times Y) = 2
  \rho$.
  \begin{claim}\label{square-definability}
    If $\{D_b\}$ is a $K$-definable family of subsets of $Y \times Y$,
    then 
    \begin{equation}
      \{b ~|~ \dpr(D_b) = 2 \rho\} \label{in-question}
    \end{equation}
    is $K$-definable.
  \end{claim}
  \begin{claimproof}
    Note that there is a surjection with finite fibers
    \begin{align*}
      P \times P & \twoheadrightarrow Y \times Y \\
      (\vec{x},\vec{y}) & \mapsto \left( \sum \vec{x} , \sum \vec{y} \right).
    \end{align*}
    Call this surjection $s$.  Then $D_b$ and $s^{-1}(D_b)$ have the
    same dp-rank, and we reduce to showing that the following set is $K$-definable:
    \begin{equation*}
      \{ b ~|~ \dpr(s^{-1}(D_b)) = \dpr(\prod_i Q_i \times \prod_i Q_i)\}
    \end{equation*}
    This follows by \cite{prdf}, Corollary~3.24.
  \end{claimproof}
  Now by Lemma~\ref{inject} we can find a $\delta_0$ and critical set
  $W$ such that
  \begin{equation*}
    \{(x,y) \in W \times W ~|~ x - y \notin X + \delta_0\}
  \end{equation*}
  has dp-rank less than $2 \rho$.  By \cite{prdf} (Remark~4.9 and
  Proposition~4.18), we may translate $W$ and arrange for $\dpr(W \cap
  Y) = \rho$.  By \cite{prdf}, Remark~4.8 the intersection $W' := W
  \cap Y$ is itself a critical set.  Note that
  \begin{equation*}
    \{(x,y) \in W' \times W' ~|~ x - y \notin X + \delta_0\}
  \end{equation*}
  has dp-rank less than $2 \rho$.

  Write $W'$ as $\varphi(\Kk;b_0)$.  The following condition on $b,
  \delta$ is $K$-definable by Claim~\ref{square-definability} and
  \cite{prdf}, Proposition~4.3.
  \begin{itemize}
  \item $\varphi(\Kk;b) \subseteq Y$, and
  \item $\dpr(\varphi(\Kk;b)) = \rho$, and
  \item The set
    \begin{equation*}
      \{(x,y) \in \varphi(\Kk;b) \times \varphi(\Kk;b) ~|~ x - y \notin X + \delta\}
    \end{equation*}
    has dp-rank less than $2 \rho$.
  \end{itemize}
  Therefore we can find $b, \delta \in K$ satisfying these conditions.
  Then $W'' := \varphi(\Kk;b)$ is a critical set, by Remark~4.8 in
  \cite{prdf} and the second requirement on $b, \delta$.  And by the
  third requirement,
  \begin{equation*}
    \{(x,y) \in W'' \times W'' ~|~ x - y \notin X + \delta\}
  \end{equation*}
  has dp-rank less than $2 \rho$.
\end{proof}

Let $I_K$ denote the group of (additive) $K$-infinitesimals.
\begin{proposition}\label{where-we-are-going}
  Let $K$ be a small model defining a critical coordinate
  configuration.  Let $X \subseteq \Kk$ be a heavy $K$-definable set.
  Then there is $a \in K$ such that for any $\epsilon \in I_K$,
  \begin{equation*}
    \dpr(\epsilon/K) \ge \rho \implies a + \epsilon \in X.
  \end{equation*}
\end{proposition}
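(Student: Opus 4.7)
The plan is to apply Lemma~\ref{inject-2} to the heavy $K$-definable set $X$, obtaining a $K$-definable critical set $W$ and a $\delta \in K$ such that the ``bad'' set
\[
 B := \{(x,y) \in W \times W : x - y \notin X + \delta\}
\]
has $\dpr(B) < 2\rho$. I claim that $a := -\delta$ works: the desired conclusion $a + \epsilon \in X$ is equivalent to $\epsilon \in X + \delta$, which I will show holds for every $\epsilon \in I_K$ with $\dpr(\epsilon/K) \ge \rho$.

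I would proceed by contradiction. Suppose $\epsilon \in I_K$ satisfies $\dpr(\epsilon/K) \ge \rho$ yet $\epsilon \notin X + \delta$. Since $\epsilon \in I_K$ and $W$ is $K$-definable and heavy, the map $x \mapsto x - \epsilon$ is a $K$-deformation, so $W \cap (W + \epsilon)$ is heavy (hence of dp-rank $\rho$). Fix $x_0 \in W \cap (W + \epsilon)$ with $\dpr(x_0/K\epsilon) = \rho$, and set $y_0 := x_0 - \epsilon \in W$. The pair $(x_0, y_0)$ lies in $W \times W$ with $x_0 - y_0 = \epsilon \notin X + \delta$, so $(x_0, y_0) \in B$.

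The plan is now to upgrade $(x_0, y_0)$ to a pair $(x^*, y^*) \in B$ with joint dp-rank at least $2\rho$ over $K$, contradicting $\dpr(B) < 2\rho$. For this I would invoke Proposition~\ref{double-independence} with $a := \epsilon$ and $b := (x_0, y_0)$, obtaining $\epsilon^* \equiv_K \epsilon$ and $b^* = (x^*, y^*) \equiv_K (x_0, y_0)$ with $\dpr(\epsilon^* b^*/K) = \dpr(\epsilon/K) + \dpr(x_0, y_0/K)$ and $\tp(b^*/K\epsilon^*)$ finitely satisfiable in $K$. The $K$-type of $(x_0, y_0)$ records the conjunction $x_0, y_0 \in W \wedge x_0 - y_0 \in I_K \wedge x_0 - y_0 \notin X + \delta$, all of which are $K$-invariant, so $(x^*, y^*)$ automatically lies in $B$.

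The main obstacle I anticipate is arranging $\dpr(x_0, y_0/K) \ge 2\rho$, equivalently (via the $K$-definable bijection $(x,y) \leftrightarrow (x, x-y)$) $\dpr(x_0, \epsilon/K) \ge 2\rho$. This chain-rule inequality is not automatic from $\dpr(x_0/K\epsilon) = \rho$ and $\dpr(\epsilon/K) \ge \rho$ by subadditivity alone. My plan to overcome it is a preliminary application of Proposition~\ref{double-independence} to $(a,b) := (\epsilon, x_0)$, producing $(\epsilon^\dagger, x_0^\dagger)$ with additive joint dp-rank and $\tp(x_0^\dagger/K\epsilon^\dagger)$ finitely satisfiable in $K$; the finite satisfiability clause, combined with the heaviness and $K$-definability of $W \cap (W + \epsilon^\dagger)$, should let me force $x_0^\dagger$ into this intersection so that $y_0^\dagger := x_0^\dagger - \epsilon^\dagger \in W$. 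Substituting $(\epsilon^\dagger, x_0^\dagger, y_0^\dagger)$ for $(\epsilon, x_0, y_0)$ in the previous step then completes the contradiction.
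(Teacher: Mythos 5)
Your overall architecture matches the paper's: apply Lemma~\ref{inject-2}, produce a pair in $W \times W$ differing by $\epsilon$, and use Proposition~\ref{double-independence} to push the pair's rank over $K$ up to $2\rho$. You also correctly flag the genuine difficulty, namely that $\dpr(x_0/K\epsilon)=\rho$ and $\dpr(\epsilon/K)\ge\rho$ do not by themselves yield $\dpr(x_0\epsilon/K)\ge 2\rho$. But your resolution of that difficulty has a gap. After the preliminary application of Proposition~\ref{double-independence} to $(\epsilon,x_0)$, you claim that finite satisfiability of $\tp(x_0^\dagger/K\epsilon^\dagger)$ in $K$ ``forces'' $x_0^\dagger-\epsilon^\dagger\in W$. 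It does not. (Note first that $W\cap(W+\epsilon^\dagger)$ is $K\epsilon^\dagger$-definable, not $K$-definable.) If $x_0^\dagger-\epsilon^\dagger\notin W$, finite satisfiability only produces a point $c\in K$ with $c\in W$ and $c-\epsilon^\dagger\notin W$, and that is not yet a contradiction: it is simply not known that every $K$-point of $W$ stays inside $W$ under translation by an infinitesimal --- that would amount to every $K$-point of $W$ being an interior point, which is exactly the ``interior'' statement the paper says is unavailable and is replacing by this weaker proposition.

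To close the gap you need the displacement machinery. Cover $W(K)$ by $D_0=\{x\in W: x-\epsilon^\dagger\in W\}$ and $D_1=\{x\in W: x-\epsilon^\dagger\notin W\}$, use Lemma~4.21 of \cite{prdf} to extract a heavy $K$-definable $Y\subseteq W$ with $Y(K)\subseteq D_i$, and rule out $i=1$ by Lemma~\ref{inf-heart} (otherwise $Y$ is $K$-displaced by a $K$-deformation, hence light). Only then does the finite-satisfiability argument bite, and only for points of $Y$; so you must also reorder your construction: build $Y$ first, choose your generic point inside $Y$ (rather than inside $W\cap(W+\epsilon)$), and then apply Proposition~\ref{double-independence} --- the moved point remains in $Y$ because $Y$ is $K$-definable, whereas your $x_0^\dagger$ is only known to lie in $W$, not in $Y$. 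With these repairs your argument becomes the paper's proof. Your final application of Proposition~\ref{double-independence} to $\epsilon$ and $(x_0,y_0)$ is then redundant, and indeed could never have done the work on its own: it produces $(x^*,y^*)\equiv_K(x_0,y_0)$, so $\dpr(x^*,y^*/K)=\dpr(x_0,y_0/K)$, and no contradiction with $\dpr(B)<2\rho$ arises unless the rank bound was already secured beforehand.
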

\begin{proof}
  By Lemma~\ref{inject-2}, find a $K$-definable critical set $W$ and
  an $a \in K$ such that
  \begin{equation*}
    \{(x,y) \in W \times W ~|~ x - y \notin X - a\}
  \end{equation*}
  has rank less than $2 \rho$.  Let $\epsilon \in I_K$ have rank at
  least $\rho$.  Note that $W(K)$ is covered by the following
  $\Kk$-definable sets:
  \begin{align*}
    D_0 & := \{x \in W ~|~ x + \epsilon \in W \} \\
    D_1 & := \{x \in W ~|~ x + \epsilon \notin W\}
  \end{align*}
  As $W$ is $K$-definable and heavy, by Lemma~4.21 in \cite{prdf}
  there must be a $K$-definable heavy set $Y \subseteq W$ such that
  \begin{equation*}
    Y(K) \subseteq D_i
  \end{equation*}
  for $i = 0$ or $i = 1$.  In fact, $i = 1$ cannot happen: otherwise
  \begin{equation*}
    x \in Y(K) \implies x \in D_1 \implies x + \epsilon \notin W
    \implies x + \epsilon \notin Y.
  \end{equation*}
  Then $Y$ is $K$-displaced by $\epsilon$.  By
  Lemma~\ref{inf-heart}, it follows that $Y$ is light, a
  contradiction.

  So $i = 0$, and $Y(K) \subseteq D_0$.  In particular,
  \begin{equation}\label{callback}
    (x \in K \wedge x \in Y) \implies x + \epsilon \in W.
  \end{equation}
  Note that $\dpr(Y) = \rho$ because $Y$ is heavy and $Y \subseteq W$.
  Take $b \in Y$ such that $\dpr(b/K) = \dpr(Y) = \rho$.  By
  Proposition~\ref{double-independence}, we may move $b$ over $K$ and
  arrange for
  \begin{equation*}
    \dpr(\epsilon b / K) = \dpr(\epsilon / K) + \dpr(b / K) =
    \dpr(\epsilon / K) + \rho \ge 2 \rho
  \end{equation*}
  and for $\tp(b / K \epsilon)$ to be finitely satisfiable in $K$.  If
  $b + \epsilon \notin W$, then by finite satisfiability there is $b'
  \in K$ such that $b' + \epsilon \notin W$ and $b' \in Y$,
  contradicting (\ref{callback}).  Therefore $b + \epsilon \in W$.

  Now
  \begin{equation*}
    \dpr(b,\epsilon/K) = \dpr(b,b + \epsilon/K) \ge 2 \rho,
  \end{equation*}
  so
  \begin{equation*}
    (b + \epsilon, b) \notin \{(x,y) \in W \times W ~|~ x - y \notin X
    - a\}.
  \end{equation*}
  But $(b + \epsilon, b) \in W \times W$, so
  \begin{equation*}
    (b + \epsilon) - b \in X - a.
  \end{equation*}
  In other words, $a + \epsilon \in X$.
\end{proof}

\subsection{The critical rank}
We can now show that the critical rank $\rho$ is as large as possible,
and that heavy sets are merely sets of full rank.

\begin{proposition}\label{old-mult-1}
  Let $K$ be a small model defining a critical coordinate configuration, and let
  $I_K$ be the group of $K$-infinitesimals.  Then $\dpr(I_K) \le \rho$.
\end{proposition}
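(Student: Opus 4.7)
The plan is a short reductio ad absurdum using Proposition~\ref{where-we-are-going}. Because $K$ defines a critical coordinate configuration, its target $W$ is a $K$-definable critical set; by the discussion preceding Corollary~\ref{shrinking}, $W$ is heavy and satisfies $\dpr(W) = \rho$.

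Applying Proposition~\ref{where-we-are-going} to $W$ produces a point $a \in K$ such that $a + \epsilon \in W$ for every $\epsilon \in I_K$ with $\dpr(\epsilon/K) \ge \rho$. Now suppose, for contradiction, that some $\epsilon \in I_K$ satisfies $\dpr(\epsilon/K) > \rho$. Then in particular $\dpr(\epsilon/K) \ge \rho$, so by the previous sentence $a + \epsilon \in W$. Since $a \in K$, we have $\dpr(a + \epsilon / K) = \dpr(\epsilon / K) > \rho = \dpr(W)$, which contradicts $a + \epsilon \in W$. Hence no element of $I_K$ has dp-rank over $K$ exceeding $\rho$, and because dp-finiteness makes dp-rank integer-valued, this yields $\dpr(I_K) \le \rho$.

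The genuine work has already been absorbed into Proposition~\ref{where-we-are-going}; once that tool is in hand the inequality is essentially a one-line verification. The only mild subtlety I anticipate is the passage from ``no realization of $I_K$ has dp-rank over $K$ larger than $\rho$'' to the statement $\dpr(I_K) \le \rho$ for the type-definable set $I_K$, but this is standard in the dp-finite setting, where dp-rank of a partial type is attained on realizations of a complete extension.
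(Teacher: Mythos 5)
Your proposal is correct and matches the paper's proof essentially verbatim: both apply Proposition~\ref{where-we-are-going} to a $K$-definable critical set, take $\epsilon \in I_K$ of maximal rank over $K$, and derive the contradiction $\dpr(a+\epsilon/K) > \rho = \dpr(W)$ from $a + \epsilon \in W$. The paper runs the final step in the other direction (choosing $\epsilon$ realizing $\dpr(I_K)$ rather than bounding all realizations), but this is the same standard fact about dp-rank of type-definable sets being witnessed by realizations.
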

\begin{proof}
  Suppose for the sake of contradiction that $\dpr(I_K) = \rho'
  > \rho$.  Choose some $K$-infinitesimal $\epsilon$ such that
  $\dpr(\epsilon/K) = \rho'$.  Let $Y$ be a critical $K$-definable
  set.  Then $Y$ is heavy and $\dpr(Y) = \rho$.  By
  Proposition~\ref{where-we-are-going}, there is some $a \in K$ such that
  $a + \epsilon \in Y$.  Then
  \begin{equation*}
    \rho' = \dpr(\epsilon / K) = \dpr( a + \epsilon / K) \le \dpr(Y)
    = \rho,
  \end{equation*}
  a contradiction.
\end{proof}
\begin{lemma}\label{lemma-surprise}
  Let $K$ be a small model.  Then $\dpr(I_K) = \dpr(\Kk)$.
\end{lemma}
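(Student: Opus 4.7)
The easy direction $\dpr(I_K) \le \dpr(\Kk)$ is immediate from $I_K \subseteq \Kk$. For the reverse, observe that $I_{K'} \subseteq I_K$ whenever $K \subseteq K'$, so I may freely enlarge $K$ and assume it defines a critical coordinate configuration; write $r := \dpr(\Kk)$. The plan is to produce $\epsilon \in I_K$ with $\dpr(\epsilon/K) = r$ using Proposition~\ref{double-independence}. Take $a \in \Kk$ with $\dpr(a/K) = r$ and apply the proposition with $a$ playing both roles, yielding $a_1, a_2 \equiv_K a$ with $\dpr(a_1 a_2/K) = 2r$ and $\tp(a_2/Ka_1)$ finitely satisfiable in $K$. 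Set $\epsilon := a_1 - a_2$. Dp-rank independence and the interdefinability of $a_1$ and $\epsilon$ over $Ka_2$ give $\dpr(\epsilon/Ka_2) = \dpr(a_1/Ka_2) = r$, and hence $\dpr(\epsilon/K) = r$ since conditioning can only lower dp-rank.

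The main task is to show $\epsilon \in I_K$, i.e., $\epsilon \in X - X$ for every heavy $K$-definable $X$. The approach mimics Proposition~\ref{where-we-are-going}: suppose for contradiction that $a_1 - a_2 \notin X - X$ for some heavy $K$-definable $X$. The $Ka_1$-definable condition ``$a_1 - y \notin X - X$'' holds at $y = a_2$, so by finite satisfiability of $\tp(a_2/Ka_1)$ in $K$ it also holds at some $y_0 \in K$. Replacing $X$ by its $K$-translate $X - y_0$, still heavy and $K$-definable by Theorem~4.20.6--7 of \cite{prdf}, I reduce to $a_1 \notin X - X$. Then I apply Lemma~\ref{inject-2} to $X$ to find a critical $K$-definable $W$ and $\delta \in K$ such that $\{(x,y) \in W \times W : x - y \notin X + \delta\}$ has dp-rank less than $2\rho$, and aim to derive a contradiction by combining this with dp-rank independence of $a_1$ and a generic point of $W$, in the spirit of the displacement argument from Lemma~\ref{inf-heart}.

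The principal obstacle is closing this contradiction cleanly: translation by the generic element $a_1$ is not a $K$-deformation in the standard sense (it is not assumed infinitesimal), so Lemma~\ref{inf-heart} cannot be invoked directly. A likely fix is to invoke Proposition~\ref{double-independence} once more to arrange $\dpr(a_1 b / K) = r + \rho$ for some generic $b \in W$, then combine the bound on the bad locus with $a_1 \notin X - X$ to extract a dp-rank contradiction against the choice of $W$ and $\delta$. Performing this bookkeeping without circularly bootstrapping through the ``heavy $=$ full rank'' theorem that is ultimately being proved is the delicate part of the argument.
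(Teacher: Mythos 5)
There is a genuine gap, and it is not just the bookkeeping you flag at the end: the central claim of your construction is false. If $a_1, a_2$ are two realizations of a generic type over $K$ that are dp-rank independent and coheir independent, nothing forces $a_1 - a_2$ to lie in $I_K$. Membership in $I_K$ is a very restrictive condition (a type-definable group meeting $K$ only in $0$), and it is not implied by any of the properties supplied by Proposition~\ref{double-independence}. Concretely, in a dp-minimal valued field such as $\Qq_p$, where $I_K$ consists of elements of valuation above the cut of $v(K)$, two independent generics $a_1, a_2$ of the valuation ring will typically have distinct residues, so $v(a_1 - a_2) = 0$ and $a_1 - a_2 \notin I_K$. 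The set $I_K$ does have full dp-rank (that is the lemma), but not every full-rank type over $K$ concentrates on $I_K$, so building $\epsilon$ as a generic difference and then trying to prove it infinitesimal is backwards. Your intermediate reduction also fails: since $X - X$ is invariant under translating $X$, replacing $X$ by $X - y_0$ does not convert ``$a_1 - y_0 \notin X - X$'' into ``$a_1 \notin Y - Y$'' for a heavy $K$-definable $Y$; and in any case ``$a_1 \notin X - X$'' for a generic $a_1$ is entirely consistent (take $X$ a valuation ring and $a_1$ of negative valuation), so there is no contradiction to be extracted.

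The paper's proof runs in the opposite direction and is much shorter. Let $n = \dpr(\Kk)$ and fix an ict-pattern $\{\varphi(x;b_{ij})\}_{i<n}$ of depth $n$ in $\Kk$ with witnesses $a_\eta$. Put $K$, the $b_{ij}$, and all the $a_\eta$ into a small model $K'$, and take a \emph{nonzero actual infinitesimal} $\epsilon \in I_{K'}$. By Remark~6.9.3 of \cite{prdf}, $K' \cdot I_{K'} \subseteq I_{K'}$, so $\epsilon \cdot a_\eta \in I_{K'}$, i.e., every witness $a_\eta$ lies in $\epsilon^{-1} \cdot I_{K'}$. Hence $\dpr(\epsilon^{-1} \cdot I_{K'}) \ge n$, scaling is a definable bijection, and $I_{K'} \subseteq I_K$, giving $\dpr(I_K) \ge n$. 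The key move is to rescale a rank-witnessing configuration \emph{into} the infinitesimals using the module structure of $I_{K'}$ over $K'$, rather than to exhibit a single high-rank element and argue it is infinitesimal. This also sidesteps the circularity you worry about, since no appeal to ``heavy $=$ full rank'' is needed.
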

\begin{proof}
  Let $n = \dpr(\Kk)$.  Choose an ict-pattern
  $\{\varphi(x;b_{ij})\}_{i < n, j < \omega}$ of depth $n$ in $\Kk$.
  For each $\eta : n \to \omega$ let $a_\eta \in \Kk$ be a witness, so
  that
  \begin{equation*}
    \models \varphi(a_\eta,b_{ij}) \iff j = \eta(i).
  \end{equation*}
  Let $K'$ be a small model containing $K$ and the $a_\eta$.  Let
  $\epsilon$ be a non-zero $K'$ infinitesimal.  Then $a_\eta \in
  \epsilon^{-1} \cdot I_{K'}$ for every $\eta$, by \cite{prdf},
  Remark~6.9.3.  So there is an ict-pattern of depth $n$ in
  $\epsilon^{-1} \cdot I_{K'}$.  Therefore
  \begin{equation*}
    \dpr(I_K) \ge \dpr(I_{K'}) = \dpr(\epsilon^{-1} \cdot I_{K'}) \ge n,
  \end{equation*}
  where the first inequality holds because $I_{K'} \subseteq I_K$.
\end{proof}

\begin{theorem}\label{new-mult-1}
  ~
  \begin{enumerate}
  \item The critical rank $\rho$ is exactly $\dpr(\Kk)$.
  \item \label{nm22} A definable set $X \subseteq \Kk$ is heavy if and only if
    $\dpr(X) = \dpr(\Kk)$.  In particular, this condition is
    definable in families.
  \end{enumerate}
\end{theorem}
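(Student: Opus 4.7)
For part (1), the plan is a simple sandwich: $\rho \le \dpr(\Kk)$ is automatic since critical sets live in $\Kk$, while for the reverse direction, fixing any small model $K$ defining a critical coordinate configuration gives $\dpr(\Kk) = \dpr(I_K) \le \rho$ by Lemma~\ref{lemma-surprise} and Proposition~\ref{old-mult-1} respectively. The forward direction of part (2) --- heaviness implies $\dpr(X) = \dpr(\Kk)$ --- is then bookkeeping: heaviness of $X$ forces $\dpr(X) \ge \rho$ (used implicitly throughout \cite{prdf} and, e.g., in the derivation ``$W$ is heavy, so $\dpr(W) \ge \rho$'' appearing in the proof of Lemma~\ref{inject}), and $\dpr(X) \le \dpr(\Kk) = \rho$ is trivial by part~(1).

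For the converse direction ``$\dpr(X) = \dpr(\Kk) \Rightarrow X$ heavy,'' the core of the theorem, my plan is to translate a critical set onto $X$. Fix a small model $K$ over which $X$ is defined, together with a $K$-definable critical coordinate configuration with target $Y$. Since $\dpr(X) = \dpr(Y) = \rho = \dpr(\Kk)$, a fiber-dimension calculation on $X \times Y$ under the map $(x,y) \mapsto y - x$ (or \cite{prdf}, Proposition~4.18 applied suitably) produces $c \in \Kk$ with $\dpr((Y + c) \cap X) = \rho$. The set $Y + c$ is critical, because a translate of a critical set is critical, so $(Y + c) \cap X$ is a definable subset of a critical set with full dp-rank $\rho$, hence itself critical by \cite{prdf}, Remark~4.8, and therefore heavy. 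Upward closure of heaviness then promotes $X \supseteq (Y+c) \cap X$ to being heavy. The definability-in-families conclusion follows by combining the just-proved equivalence with \cite{prdf}, Theorem~4.20.4.

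The main obstacle is the translation step, namely verifying that a $c$ with $\dpr((Y+c) \cap X) = \rho$ exists when only one of $X, Y$ is a priori critical. If \cite{prdf}, Proposition~4.18 does not handle this asymmetry directly, one argues ad hoc: the total space $\{(y, c) : y \in X, \ y - c \in Y\}$ is in definable bijection with $X \times Y$ via $(y, c) \leftrightarrow (y, y - c)$, so it has dp-rank $2\rho$; the base of $c$-values sits in $\Kk$ and so has dp-rank at most $\rho$; and dp-rank additivity for definable families then forces some fiber $(Y + c) \cap X$ to have dp-rank at least $\rho$, which is the required $c$.
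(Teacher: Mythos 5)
Your proposal is correct, and for part (1) and the direction ``heavy $\Rightarrow$ full rank'' it coincides with the paper's proof verbatim: the same sandwich $\dpr(\Kk) = \dpr(I_K) \le \rho \le \dpr(\Kk)$ via Lemma~\ref{lemma-surprise} and Proposition~\ref{old-mult-1}, and the same observation that a heavy set contains a critical set of rank $\rho = \dpr(\Kk)$. The only divergence is in the direction ``$\dpr(X) = \dpr(\Kk) \Rightarrow X$ heavy'': the paper simply cites Lemma~7.1 of \cite{prdf}, which states exactly this implication (and does not depend on part (1)), whereas you reprove it by translating the target $Y$ of a critical coordinate configuration onto $X$. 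Your replacement argument is sound: the total space $\{(y,c) : y \in X,\ y - c \in Y\}$ is in definable bijection with $X \times Y$, which has rank $\dpr(X) + \dpr(Y) = 2\rho$ since super-additivity of dp-rank holds for products of definable sets; the base of $c$-values lies in $\Kk$ and so has rank at most $\rho$ by part (1); sub-additivity then yields a fiber $(Y+c) \cap X$ of rank $\rho$, which is critical by \cite{prdf} Remarks~4.8--4.9 and hence heavy, and upward closure of heaviness finishes. So you have in effect supplied a proof of the cited external lemma rather than taken a different route through the present paper; the net effect is a more self-contained but slightly longer argument.
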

\begin{proof}~
  \begin{enumerate}
  \item Proposition~\ref{old-mult-1} and Lemma~\ref{lemma-surprise},
    recalling that the critical rank is at most $\dpr(\Kk)$ by
    definition.
  \item If $\dpr(X) = \dpr(\Kk)$, then $X$ is heavy by \cite{prdf},
    Lemma~7.1.  Conversely, if $X$ is heavy, then $X$ contains a
    critical set, so $\dpr(X) \ge \rho = \dpr(\Kk)$.  Definability
    then follows from definability of heaviness (\cite{prdf},
    Theorem~4.20.4). \qedhere
  \end{enumerate} 
\end{proof}

\begin{corollary}\label{alternative-bases}
  Let $K$ be an unstable field of dp-rank $n$.  As $X$ ranges over the
  rank-$n$ definable subsets of $K$, each of the following ranges over
  a neighborhood basis of 0 in the canonical topology:
  \begin{align*}
    X \mininf X = \{\delta \in K ~&|~ X \cap (X + \delta) \textrm{ is heavy}\} \\
    X \ominus X := \{\delta \in K ~&|~ X \cap (X + \delta) \textrm{ is infinite}\} \\
    X - X = \{\delta \in K ~&|~ X \cap (X + \delta) \textrm{ is non-empty}\}
  \end{align*}
  In particular, Sinclair's ``Johnson topology'' exists and
  agrees with the canonical topology (see \cite{sinclair} \S 3.2).
\end{corollary}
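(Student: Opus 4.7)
The plan is to exploit the nesting $X \mininf X \subseteq X \ominus X \subseteq X - X$ (which holds because heavy $\Rightarrow$ infinite $\Rightarrow$ non-empty) together with the identification of heavy sets with rank-$n$ sets from Theorem~\ref{new-mult-1}.\ref{nm22}. Since the canonical topology already has $\{X - X : X \text{ heavy}\}$ as a neighborhood basis, the nesting makes the cofinality direction automatic: for any open $U \ni 0$, choose heavy $X$ with $X - X \subseteq U$, and then $X \mininf X \subseteq X \ominus X \subseteq X - X \subseteq U$.

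The substantive content is to show each $X \mininf X$ is itself a neighborhood of $0$, which will promote $\{X \mininf X\}$ and $\{X \ominus X\}$ to honest bases. First I would fix a small model $K$ over which $X$ is defined. For every $\varepsilon \in I_K$ the Example in Section~\ref{first-of-multy} says that $x \mapsto x + \varepsilon$ is a $K$-deformation, which by definition means $X \cap (X - \varepsilon)$ is heavy, i.e. $-\varepsilon \in X \mininf X$; since $I_K$ is a group, $I_K \subseteq X \mininf X$. By Theorem~\ref{new-mult-1}.\ref{nm22} (definability of heaviness in families) the set $X \mininf X$ is $K$-definable. Since $I_K = \bigcap\{Y - Y : Y \text{ heavy, } K\text{-definable}\}$ is type-definable over $K$ and sits inside the $K$-definable set $X \mininf X$, saturation yields finitely many heavy $K$-definable $Y_1,\ldots,Y_m$ with $\bigcap_{i=1}^m (Y_i - Y_i) \subseteq X \mininf X$. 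Because the canonical topology is a \emph{ring} topology, this finite intersection of basic neighborhoods of $0$ contains some basic neighborhood $Z - Z$ with $Z$ heavy, so $X \mininf X$ is indeed a neighborhood of $0$.

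Sinclair's ``Johnson topology'' is defined in \cite{sinclair} \S 3.2 by stipulating that a family such as $\{X \ominus X\}$ form a neighborhood basis of $0$; the argument above shows that this family does form such a basis, for the canonical topology, so Sinclair's topology both exists and agrees with the canonical one. The main obstacle I anticipate is the compactness step: it requires $X \mininf X$ to be $K$-definable (not merely $\acl(K)$-definable), which is exactly provided by Theorem~\ref{new-mult-1}.\ref{nm22}; and replacing the finite intersection of basic neighborhoods by a single one uses the previously-established fact that the canonical topology is a ring topology. Everything else is formal.
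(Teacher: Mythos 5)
Your argument runs in the opposite direction from the paper's, and the direction matters here. The paper's proof takes $\{X \mininf X\}$ to be the neighborhood basis \emph{by definition} of the canonical topology (\cite{prdf}, Definitions 6.1 and 6.3), so the entire content of the corollary is the downward cofinality claim: for every heavy $X$ there is a heavy $Y$ with $Y - Y \subseteq X \mininf X$. The paper gets this from a single compactness argument on pairs: the partial type ``$x \in I_K$, $y \in I_K$, $x - y \notin X \mininf X$'' is inconsistent (because $I_K - I_K = I_K \subseteq X \mininf X$), so compactness produces one $K$-definable $Y \supseteq I_K$ with $Y - Y \subseteq X \mininf X$, and $Y$ is heavy by \cite{prdf}, Proposition 6.5.1, since it contains a basic neighborhood. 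You instead take as your starting point that $\{X - X\}$ is already known to be a neighborhood basis. But that is exactly the third assertion of the corollary; the description of the canonical topology via the sets $X - X$ in the introduction is a summary of results, not the definition the proof works from. So, as written, the cofinality of $\{X \mininf X\}$ and $\{X \ominus X\}$ that you call ``automatic'' rests on a premise that is part of what is being proved.

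That said, the gap is small and your own compactness step closes it. You correctly show $I_K \subseteq X \mininf X$, that $X \mininf X$ is $K$-definable, and that finitely many heavy $K$-definable $Y_1,\dots,Y_m$ satisfy $\bigcap_{i=1}^m (Y_i - Y_i) \subseteq X \mininf X$. Now set $Y = \bigcap_{i=1}^m Y_i$ (or run compactness against $I_K \times I_K$ directly, as the paper does): then $Y \supseteq I_K$, so $Y$ contains a basic neighborhood and is therefore heavy, and $Y - Y \subseteq \bigcap_{i=1}^m (Y_i - Y_i) \subseteq X \mininf X$. This is precisely the paper's cofinality claim, from which all three families are bases with no prior appeal to $\{X - X\}$. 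Two smaller remarks: the final invocation of the ring-topology axiom is unnecessary, since a finite intersection of neighborhoods of $0$ is a neighborhood of $0$ in any topology; and the passage between ``heavy'' and ``rank $n$'' via Theorem~\ref{new-mult-1} is indeed needed at the end to state the bases in the form the corollary uses.
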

\begin{proof}
  The family $\{X \mininf X\}$ is a neighborhood basis of 0 by
  definition of the canonical topology.  As $X \mininf X \subseteq X
  \ominus X \subseteq X - X$, the families $\{X \ominus X\}$ and $\{X
  - X\}$ are certainly families of neighborhoods.  To see that they
  are neighborhood bases, we only need to show that they are cofinal
  with the family $\{X \mininf X\}$, which is the following claim:
  \begin{claim}
    For any heavy set $X$ there is a heavy set $Y$ such that
    \begin{equation*}
      Y \ominus Y \subseteq Y - Y \subseteq X \mininf X.
    \end{equation*}
  \end{claim}
  \begin{claimproof}
    Choose a monster model $\Kk \succeq K$ and note that the
    following type is inconsistent:
    \begin{align*}
      x &\in I_K \\
      y &\in I_K \\
      x - y &\notin X \mininf X
    \end{align*}
    because $I_K - I_K = I_K \subseteq X \mininf X$.  Therefore, by
    compactness, there is a $K$-definable set $Y \supseteq I_K$ such
    that
    \begin{equation*}
      (x \in Y \textrm{ and } y \in Y) \implies x - y \in X \mininf X,
    \end{equation*}
    meaning that $Y - Y \subseteq X \mininf X$.  The set $Y$ contains
    a $K$-definable basic neighborhood, and is therefore heavy by
    \cite{prdf}, Proposition~6.5.1.
  \end{claimproof}
  Now by Theorem~\ref{new-mult-1}, the heavy sets are exactly the rank
  $n$ sets, so the family $\{X \ominus X\}$ is the family of basic
  neighborhoods of 0 in Sinclair's Johnson topology.
\end{proof}

\subsection{Infinitesimals and multiplication}
We follow a strategy similar to (\cite{arxiv-myself}, Claim 4.14), using the
existence of (near) interior to relate multiplicative and additive
infinitesimals.

\begin{proposition}\label{old-mult-2}
  Let $K$ be a small model defining a critical coordinate configuration, and let
  $I_K$ be the group of $K$-infinitesimals.  Then the group $U_K$ of
  multiplicative $K$-infinitesimals is exactly $1 + I_K$.
\end{proposition}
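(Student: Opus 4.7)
The easy direction $U_K \subseteq 1 + I_K$ is Theorem~\ref{mult-1}.\ref{interesting}, so I focus on the reverse inclusion. Fix $\epsilon \in I_K$ and a $K$-definable heavy set $X$; the plan is to exhibit a single element of $X \cap (1+\epsilon)X$ of dp-rank $n := \dpr(\Kk)$ over $K$. By Theorem~\ref{new-mult-1}.\ref{nm22} this is sufficient to conclude that $X \cap (1+\epsilon)X$ is heavy, which is the defining property of $1 + \epsilon \in U_K$.

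The construction is to invoke the near interior (Proposition~\ref{where-we-are-going}) twice. First, it supplies $a \in K$ such that $a + \delta \in X$ for every $\delta \in I_K$ with $\dpr(\delta/K) \ge n$. Next, let $K' \succeq K$ be a small submodel with $\epsilon \in K'$. By Lemma~\ref{lemma-surprise} applied at $K'$, $\dpr(I_{K'}) = n$, so saturation furnishes $\delta \in I_{K'}$ with $\dpr(\delta/K') = n$. Since $I_{K'} \subseteq I_K$ and removing parameters cannot decrease rank, $\delta \in I_K$ with $\dpr(\delta/K) \ge n$, giving $a + \delta \in X$.

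The algebraic identity $(1+\epsilon)(a+\delta) = a + \eta$ with $\eta := a\epsilon + (1+\epsilon)\delta$ reduces the remaining goal to $a + \eta \in X$, which will follow from a second application of the near interior provided $\eta \in I_K$ and $\dpr(\eta/K) \ge n$. The summand $a\epsilon$ lies in $I_K$ because $I_K$ is a $K$-vector subspace of $(\Kk,+)$: scaling by $a \in K$ is an affine symmetry, which preserves heaviness by \cite{prdf}, Theorem~4.20.6--7. The delicate summand is $\epsilon\delta$. While $I_K$ is not a priori closed under multiplication by $\Kk$, the very same reasoning applied with $K'$ in place of $K$ shows $I_{K'}$ is a $K'$-vector subspace, and $\epsilon \in K'$ gives $\epsilon\delta \in I_{K'} \subseteq I_K$. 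Hence $\eta \in I_K$. Moreover $1 + \epsilon \ne 0$, since the canonical topology on $K$ is Hausdorff and therefore $-1 \notin I_K$, so the map $\delta \mapsto a\epsilon + (1+\epsilon)\delta$ is a $K'$-definable bijection; consequently $\dpr(\eta/K) \ge \dpr(\eta/K') = \dpr(\delta/K') = n$. The second appeal to Proposition~\ref{where-we-are-going} delivers $(1+\epsilon)(a+\delta) = a+\eta \in X$, and together with $a+\delta \in X$ this places the point $(1+\epsilon)(a+\delta)$ in $X \cap (1+\epsilon)X$ with dp-rank at least $n$ over $K$ (being $K'$-interdefinable with $\delta$).

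The anticipated main obstacle is precisely the treatment of the product $\epsilon\delta$. Whether $\epsilon\delta \in I_K$ is morally equivalent to the theorem itself: $U_K = 1 + I_K$ forces $(1+\epsilon_1)(1+\epsilon_2) - 1 = \epsilon_1 + \epsilon_2 + \epsilon_1\epsilon_2 \in I_K$, hence $I_K \cdot I_K \subseteq I_K$. The apparent circularity is broken by enlarging the base to $K'$, where multiplication by the specific element $\epsilon$ is a $K'$-definable scaling and so preserves $K'$-infinitesimality.
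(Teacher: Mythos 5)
Your proof is correct and follows essentially the same route as the paper: both arguments apply the near-interior result (Proposition~\ref{where-we-are-going}) twice via the identity $(1+\epsilon)(a+\delta)=a+\eta$ with $\eta$ a full-rank infinitesimal. The only differences are cosmetic: the paper certifies heaviness by embedding the full-rank type-definable set $I_K\setminus S$ into $X\cap(1+\epsilon)^{-1}X$ rather than exhibiting a single high-rank point, and it handles the product term $\epsilon\delta$ by citing $I_K\cdot I_K\subseteq I_K$ (Corollary~10.5 of \cite{prdf}) where you use the equally valid base-change trick $\epsilon\cdot I_{K'}\subseteq I_{K'}$.
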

\begin{proof}
  The inclusion $U_K \subseteq 1 + I_K$ is
  Theorem~\ref{mult-1}.\ref{interesting}.  We must show the converse.
  
  By Lemma~\ref{lemma-surprise} and Proposition~\ref{old-mult-1},
  $\dpr(I_K) = \rho = \dpr(\Kk)$.  Fix a $K$-infinitesimal $\epsilon_0 \in I_K$ and a
  $K$-definable heavy set $X \subseteq \Kk$.  We will show that the
  intersection of $X$ and $(1 + \epsilon_0)^{-1} X$ is heavy.

  By Proposition~\ref{where-we-are-going}, find $a \in K$ such that for
  any $\epsilon \in I_K$,
  \begin{equation*}
    \dpr(\epsilon/K) = \rho \implies a + \epsilon \in X.
  \end{equation*}
  \begin{claim} \label{eps-2}
    If $\epsilon_1$ is a $K$-infinitesimal such that
    $\dpr(\epsilon_1 / K \epsilon_0) = \rho$, then
    \begin{equation*}
      (a + \epsilon_1) \in X \cap (1 + \epsilon_0)^{-1} X.
    \end{equation*}
  \end{claim}
  \begin{claimproof}
    The element
    \begin{equation*}
      \epsilon_2 := a \cdot \epsilon_0 + \epsilon_1 + \epsilon_1
      \cdot \epsilon_0.
    \end{equation*}
    is a $K$-infinitesimal by \cite{prdf} (Remark~6.9.3, Theorem~6.17,
    and Corollary~10.5).  Note that
    \begin{equation*}
      (a + \epsilon_1) (1 + \epsilon_0) = a + a \cdot \epsilon_0 +
      \epsilon_1 + \epsilon_1 \cdot \epsilon_0 = a + \epsilon_2.
    \end{equation*}
    Thus $\epsilon_1$ and $\epsilon_2$ are inter-definable over $K
    \epsilon_0$, and so
    \begin{equation*}
      \dpr(\epsilon_2 / K \epsilon_0) = \dpr(\epsilon_1 / K
      \epsilon_0) = \rho.
    \end{equation*}
    As $\dpr(I_K) = \rho$, this implies
    \begin{equation*}
      \dpr(\epsilon_2 / K) = \dpr(\epsilon_1 / K) = \rho.
    \end{equation*}
    By choice of $a$, it follows that
    \begin{align*}
      (a + \epsilon_1) & \in X \\
      (a + \epsilon_1)(1 + \epsilon_0) = (a + \epsilon_2) & \in X
    \end{align*}
    Therefore $(a + \epsilon_1) \in X \cap (1 + \epsilon_0)^{-1} X$.
  \end{claimproof}
  Now let $S$ be the type-definable set
  \begin{equation*}
    S = \{\epsilon \in I_K ~|~ a + \epsilon \notin X \cap (1 +
    \epsilon_0)^{-1} X\}.
  \end{equation*}
  Then $\dpr(S) \le \dpr(I_K) = \rho$.  We claim the inequality is
  strict.  Otherwise, we can find $\epsilon \in S$ with
  $\dpr(\epsilon / K \epsilon_0) = \rho$, because $S$ is
  type-definable over $K \epsilon_0$.  Then by Claim~\ref{eps-2},
  $\epsilon \notin S$, a contradiction.  Thus $\dpr(S) < \rho$, and
  so $\dpr(I_K \setminus S) = \rho$.  But the set
  \begin{equation*}
    I_K \setminus S = \{ \epsilon \in I_K ~|~ a + \epsilon \in X
    \cap ( 1 + \epsilon_0)^{-1} X\}
  \end{equation*}
  has a tautological embedding into $X \cap (1 + \epsilon_0)^{-1} X$, and so 
  \begin{equation*}
    \dpr(X \cap (1 + \epsilon_0)^{-1} X) \ge \rho = \dpr(\Kk).
  \end{equation*}
  Thus $X \cap (1 + \epsilon_0)^{-1}X$ is heavy.  We have shown that
  $1 + \epsilon_0$ is a multiplicative infinitesimal, and therefore
  that $1 + I_K \subseteq U_K$.
\end{proof}

\begin{theorem}\label{new-mult-2}
  Let $K \preceq \Kk$ be any small model.  If $I_K$ denotes the additive
  infinitesimals and $U_K$ denotes the multiplicative infinitesimals,
  then $1 + I_K = U_K$.
\end{theorem}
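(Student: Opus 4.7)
The plan is to bootstrap from Proposition~\ref{old-mult-2} by passing to a larger small model that does define a critical coordinate configuration. The inclusion $U_K \subseteq 1 + I_K$ is already Theorem~\ref{mult-1}.\ref{interesting}, so only $1 + I_K \subseteq U_K$ needs work; fix $\epsilon_0 \in I_K$ and aim to show $1 + \epsilon_0 \in U_K$.

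First I would produce a small model $K' \succeq K$ that defines a critical coordinate configuration and satisfies $\tp(K'/K\epsilon_0)$ finitely satisfiable in $K$.  This is the standard coheir move used already in the proof of Theorem~\ref{defo-group}: pick any small model $K'' \succeq K$ defining a critical coordinate configuration, then realize a global $K$-finitely-satisfiable extension of $\tp(K''/K)$ over $K\epsilon_0$.  Since ``defines a critical coordinate configuration'' is $\Aut(\Kk/\emptyset)$-invariant and being an elementary substructure is preserved under automorphisms, the moved copy $K'$ is still a small model defining a critical coordinate configuration.

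The crux is to verify that $\epsilon_0 \in I_{K'}$, i.e., that $\epsilon_0$ remains infinitesimal relative to the larger model.  Suppose for contradiction that some $K'$-definable heavy set $\varphi(\Kk, \bar c)$ with $\bar c \in K'$ satisfies $\epsilon_0 \notin \varphi(\Kk, \bar c) - \varphi(\Kk, \bar c)$.  Since heaviness is definable in families (\cite{prdf}, Theorem~4.20.4) and $\Aut(\Kk/\emptyset)$-invariant, the conjunction of these two conditions is a formula $\psi(\bar c, \epsilon_0)$ without further parameters.  Finite satisfiability of $\tp(\bar c/K\epsilon_0)$ in $K$ then yields $\bar c' \in K$ with $\models \psi(\bar c', \epsilon_0)$, producing a $K$-definable heavy set $\varphi(\Kk, \bar c')$ that witnesses $\epsilon_0 \notin I_K$, a contradiction.

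Having $\epsilon_0 \in I_{K'}$, Proposition~\ref{old-mult-2} applied to $K'$ gives $1 + \epsilon_0 \in U_{K'}$.  Every $K$-definable heavy set is in particular $K'$-definable heavy, so directly from the definition $U_{K'} \subseteq U_K$, and hence $1 + \epsilon_0 \in U_K$ as required.  I do not expect any substantive obstacle: the argument is essentially a coheir reduction of the same flavor as the proof of Theorem~\ref{defo-group}, and the only delicate point is the finite-satisfiability step showing $\epsilon_0 \in I_{K'}$, which depends on heaviness being $\emptyset$-definable in families.
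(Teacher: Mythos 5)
Your proof is correct, but it takes a genuinely different route from the paper's. The paper fixes a $K$-definable heavy set $X$, notes that $Y = \{\mu : X \cap \mu X \text{ heavy}\}$ is $K$-definable, observes that $1 + I_{K'} = U_{K'} \subseteq Y$ for a larger model $K'$ defining a critical coordinate configuration, and then uses compactness plus definability of heaviness to pull the parameters of the resulting $K'$-definable basic neighborhood $Z \mininf Z \subseteq Y - 1$ down into $K$ (via $K \preceq \Kk$), so that $1 + I_K \subseteq 1 + (Z' \mininf Z') \subseteq Y$. In other words, the paper transfers the \emph{neighborhood} downward from $K'$ to $K$. You instead transfer the \emph{infinitesimal} upward: you move the large model $K'$ so that $\tp(K'/K\epsilon_0)$ is a coheir over $K$, deduce $\epsilon_0 \in I_{K'}$ by finite satisfiability (exactly the mechanism of Lemma~\ref{heirs} and the model-moving step in Theorem~\ref{defo-group}), apply Proposition~\ref{old-mult-2} to $K'$, and finish with the trivial inclusion $U_{K'} \subseteq U_K$. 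Both arguments rest on the same two pillars --- Proposition~\ref{old-mult-2} and the $\emptyset$-definability of heaviness in families --- but yours localizes the argument to a single $\epsilon_0$ and reuses the coheir machinery already set up in \S\ref{first-of-multy}, whereas the paper's version proves the slightly more general transfer statement that $1 + I_{K'} = U_{K'}$ for \emph{any} larger $K'$ implies $1 + I_K = U_K$. All the steps you flag as delicate (invariance of ``defines a critical coordinate configuration'' under $\Aut(\Kk/K)$, and the $\emptyset$-definability of the condition witnessing $\epsilon_0 \notin I_{K'}$) do go through, so there is no gap.
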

\begin{proof}
  By Proposition~\ref{old-mult-2}, it suffices to prove the following
  \begin{quote}
    If $K \preceq K'$ and if $1 + I_{K'} = U_{K'}$, then $1 + I_K =
    U_K$.
  \end{quote}
  Suppose $1 + I_{K'} =U_{K'}$.  By
  Theorem~\ref{mult-1}.\ref{interesting} the inclusion $U_K \subseteq
  1 + I_K$ always holds.  We must show $1 + I_K \subseteq U_K$.  Let
  $X$ be a heavy $K$-definable set.  Consider the set
  \begin{equation*}
    Y = \{\mu \in \Kk ~|~ X \cap (\mu \cdot X) \textrm{ is heavy}\}.
  \end{equation*}
  It suffices to show that $1 + I_K \subseteq Y$.  As $X$ is
  $K'$-definable, we have $1 + I_{K'} = U_{K'} \subseteq Y$.
  Therefore, there is a $K'$-definable heavy set $N \supseteq I_{K'}$
  such that
  \begin{equation*}
    1 + N \subseteq Y.
  \end{equation*}
  By Proposition~6.5 in \cite{prdf}, we may take $N$ to be a basic
  neighborhood (see \cite{prdf}, Definition~6.3), and so
  \begin{equation*}
    1 + (Z \mininf Z) \subseteq Y,
  \end{equation*}
  where $N = Z \mininf Z$ is as in \cite{prdf}, Definition~6.1.

  As heaviness is definable in families, we can pull the
  parameters defining $Z$ down into $K$, finding a $K$-definable heavy
  set $Z'$ such that
  \begin{equation*}
    1 + (Z' \mininf Z') \subseteq Y,
  \end{equation*}
  Then $1 + I_K \subseteq 1 + (Z' \mininf Z') \subseteq Y$.
\end{proof}

\begin{corollary}\label{field-topology-cor}
  The canonical topology of \cite{prdf} (Remark~6.18 and
  Corollary~10.5) is a \emph{field} topology---division is continuous.
\end{corollary}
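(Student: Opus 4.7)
The plan is to reduce the corollary to the group-theoretic content of Theorem~\ref{new-mult-2}. Since the canonical topology is already known to be a ring topology (see \cite{prdf}), division will be continuous provided that inversion $x \mapsto x^{-1}$ is continuous on $\Kk^\times$. By continuity of multiplication by a fixed nonzero scalar, if inversion is continuous at $1$ then it is continuous at every $a \in \Kk^\times$, via the decomposition $x^{-1} = a^{-1} \cdot (x/a)^{-1}$. So it suffices to check continuity of inversion at $1$.

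To do this, I would pass to a sufficiently saturated $\Kk$ and a small model $K$ over which a given neighborhood $U$ of $1$ is defined, and translate the statement into the language of infinitesimals. The claim becomes the inclusion
\begin{equation*}
  (1 + I_K)^{-1} \subseteq 1 + I_K,
\end{equation*}
i.e.\ if $\mu - 1 \in I_K$, then $\mu^{-1} - 1 \in I_K$. The passage between topological and infinitesimal formulations is a standard compactness argument, using that $1 + I_K$ is the intersection (as $X$ ranges over $K$-definable heavy sets) of the translates $1 + (X \mininf X)$, which by Corollary~\ref{alternative-bases} form a neighborhood basis of $1$.

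The required inclusion is then immediate from what has already been proved. By Theorem~\ref{new-mult-2}, $1 + I_K = U_K$, and by Theorem~\ref{mult-1}.\ref{m11}, $U_K$ is a subgroup of $\Kk^\times$, hence closed under inversion. Therefore $(1 + I_K)^{-1} = U_K^{-1} = U_K = 1 + I_K$, as needed. There is no serious obstacle: the entire technical content lives in Theorem~\ref{new-mult-2}, and the corollary is essentially the observation that a ring topology whose group of ``infinitesimally-close-to-$1$'' elements is a group under multiplication must have continuous inversion.
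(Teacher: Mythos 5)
Your proposal is correct and is exactly the argument the paper intends: the corollary is stated without proof as an immediate consequence of Theorem~\ref{new-mult-2}, and the introduction already records the equivalence between continuity of division and $1+I_K$ being a subgroup of $\Kk^\times$. Your reduction to continuity of inversion at $1$ and the compactness translation to $(1+I_K)^{-1}\subseteq 1+I_K$ just fills in the standard details that the paper leaves implicit.
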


As another consequence, we can slightly simplify the description of
the multiplicative infinitesimals:
\begin{proposition}\label{prop:psh-whatever}
  Let $K$ be a small model.  The group $U_K$ of multiplicative
  $K$-infinitesimals is exactly
  \begin{equation*}
    \bigcap \{X \cdot X^{-1} : X \subseteq
    \Kk^\times \text{ heavy and $K$-definable}\}
  \end{equation*}
\end{proposition}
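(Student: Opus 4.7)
The plan is to prove the two inclusions separately, with the forward direction being immediate and the reverse following from a general-position argument using the heavy-iff-full-rank equivalence.

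For the easy inclusion $U_K \subseteq \bigcap X \cdot X^{-1}$, I fix $\mu \in U_K$ and a heavy $K$-definable $X \subseteq \Kk^\times$. By definition of multiplicative $K$-infinitesimal, $X \cap (\mu \cdot X)$ is heavy, hence in particular non-empty, so some $a \in X$ equals $\mu \cdot b$ for some $b \in X$. This gives $\mu = a \cdot b^{-1} \in X \cdot X^{-1}$, as desired.

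For the reverse inclusion, I would start from the identity
\begin{equation*}
  U_K = \bigcap \{ D \cdot D^{-1} : D \text{ is $K$-definable with } U_K \subseteq D \subseteq \Kk^\times \},
\end{equation*}
which is exactly the presentation established inside the proof of Theorem~\ref{mult-1}.\ref{min-heavy} (replacing any $D$ by $D \cap \Kk^\times$ only shrinks the intersection and still contains $U_K$, so confining attention to $D \subseteq \Kk^\times$ is harmless). The key observation is then that every such $D$ is automatically heavy. By Theorem~\ref{new-mult-2}, $U_K = 1 + I_K$, and by Lemma~\ref{lemma-surprise}, $\dpr(I_K) = \dpr(\Kk)$. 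Hence $\dpr(U_K) = \dpr(\Kk)$, and since $D \supseteq U_K$ sits inside $\Kk$, monotonicity of dp-rank forces $\dpr(D) = \dpr(\Kk)$. By Theorem~\ref{new-mult-1}.\ref{nm22} this is exactly the statement that $D$ is heavy. Consequently the family $\{D : U_K \subseteq D \subseteq \Kk^\times,\ D \text{ $K$-definable}\}$ is contained in the family $\{X : X \subseteq \Kk^\times \text{ heavy and $K$-definable}\}$, and intersecting $X \cdot X^{-1}$ over the (a priori larger) heavy family gives a set contained in $U_K$.

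No serious obstacle remains, since all the structural input has been built up already: Theorem~\ref{new-mult-1} supplies the heaviness/full-rank equivalence, Theorem~\ref{new-mult-2} converts $U_K$ into the additive description $1 + I_K$, and Lemma~\ref{lemma-surprise} supplies the full dp-rank of $I_K$. The only mild bookkeeping is the passage $D \leadsto D \cap \Kk^\times$ in the canonical presentation of $U_K$, and the observation that $\dpr(1 + I_K) = \dpr(I_K)$ because translation by $1$ is a definable bijection.
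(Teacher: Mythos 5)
Your proof is correct and follows essentially the same route as the paper: the forward inclusion via non-emptiness of $X \cap (\mu \cdot X)$, and the reverse inclusion by showing $\dpr(U_K) = \dpr(1+I_K) = \dpr(\Kk)$ (Lemma~\ref{lemma-surprise} plus Theorem~\ref{new-mult-2}), so every $K$-definable $D \supseteq U_K$ is heavy by Theorem~\ref{new-mult-1}.\ref{nm22}, and the heavy intersection is therefore contained in the intersection from Theorem~\ref{mult-1}.\ref{min-heavy}, which equals $U_K$. No discrepancies worth noting.
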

\begin{proof}
  Let $X \div X$ and $X \div_\infty X$ denote the sets
  \begin{align*}
    X \div X &= \{\mu \in \Kk^\times : X \cap (\mu \cdot X) \text{ is
      non-empty}\} = X \cdot X^{-1} \\ X \div_\infty X &= \{\mu \in
    \Kk^\times : X \cap (\mu \cdot X) \text{ is heavy}\}
  \end{align*}
  As in the proof of Theorem~\ref{mult-1}.\ref{m12}, the set $X
  \div_\infty X$ is definable and
  \begin{equation*}
    U_K = \bigcap \{X \div_\infty X : X \subseteq \Kk^\times \text{
      heavy and $K$-definable}\}.
  \end{equation*}
  But $X \div_\infty X \subseteq X \div X$, so
  \begin{align*}
    & \bigcap \{X \div_\infty X : X \subseteq \Kk^\times \text{ heavy
      and $K$-definable}\} \\ \subseteq & \bigcap \{X \div X : X
    \subseteq \Kk^\times \text{ heavy and $K$-definable}\}
  \end{align*}
  By Lemma~\ref{lemma-surprise} and Theorem~\ref{new-mult-2},
  $\dpr(U_K) = \dpr(\Kk)$.  Because heaviness is the same as having
  full rank, every definable set $X \subseteq \Kk^\times$ containing
  $U_K$ is heavy.  Thus
  \begin{align*}
    & \bigcap \{X \div X : X \subseteq \Kk^\times \text{ heavy and
      $K$-definable}\} \\ \subseteq & \bigcap \{X \div X : X \subseteq
    \Kk^\times,~X \supseteq U_K,~X \text{ is $K$-definable}\}.
  \end{align*}
  As in the proof of Theorem~\ref{mult-1}.\ref{min-heavy}, this final
  intersection is $U_K$, because $U_K$ is a subgroup of $\Kk^\times$
  that is type-definable over $K$.  So all the inclusions above are
  equalities, and
  \begin{equation*}
    U_K = \{X \div X : X \subseteq \Kk^\times \text{ heavy and $K$-definable}\}.
  \end{equation*}
\end{proof}

\subsection{The key algebraic properties of the infinitesimals}
The following proposition summarizes the algebraic properties of the
infinitesimals that wil be important in the remaining sections.
\begin{proposition}\label{prop:prop}
  Let $\Kk$ be a saturated unstable dp-finite field, and let $K
  \preceq \Kk$ be small.
  \begin{enumerate}
  \item $I_K$ is a subgroup of $\Kk$.
  \item $I_K = I_K \cdot I_K$, where the right hand side means the set
    of finite sums
    \begin{equation*}
      x_1y_1 + \cdots + x_ny_n
    \end{equation*}
    with $x_i, y_i$ from $I_K$.
  \item $1 + I_K$ is a subgroup of $\Kk^\times$.  In particular, $-1
    \notin I_K$.
  \item For every $n \ge 1$, the $n$th power map
    \begin{align*}
      1 + I_K & \to 1 + I_K \\
      x & \mapsto x^n
    \end{align*}
    is onto.
  \item If $\characteristic(K) = p > 0$, then the Artin-Schreier map
    \begin{align*}
      I_K & \to I_K \\
      x & \mapsto x^p - x
    \end{align*}
    is onto.
  \end{enumerate}
\end{proposition}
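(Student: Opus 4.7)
Parts (1) and (3) are essentially free. Part (1) is the additive-subgroup statement recalled from \cite{prdf}. For (3), Theorem~\ref{new-mult-2} gives $1 + I_K = U_K$, and Theorem~\ref{mult-1}.\ref{m11} asserts that $U_K$ is a subgroup of $\Kk^\times$; since $0 \notin \Kk^\times$, we deduce $-1 \notin I_K$.

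The three remaining assertions (2), (4), (5) all fit into a single strategy. In each, the plan is to form the image $H$ of a natural definable map, show that $H$ has full dp-rank $\dpr(\Kk)$ so that every $K$-definable set containing $H$ is heavy (by Theorem~\ref{new-mult-1}), and then apply the multiplicative or additive form of Theorem~\ref{mult-1}.\ref{min-heavy} to pin $U_K$ or $I_K$ inside $H$. The additive analog---if $G$ is a type-definable additive subgroup over $K$ with every $K$-definable $D \supseteq G$ heavy, then $I_K \le G$---is proved by the same compactness argument as in the multiplicative case, using the identity $G = \bigcap_{D \supseteq G} (D - D)$ for type-definable additive subgroups.

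Concretely, for (4) I would take $H = (1 + I_K)^n \le 1 + I_K$: the kernel of $x \mapsto x^n$ on $1 + I_K$ lies in the finite group of $n$-th roots of unity in $\Kk$, so $H$ has finite index in $1 + I_K$ and is type-definable over $K$ as the complement of finitely many $\vee$-definable cosets, while finite fibers of the power map give $H$ full dp-rank. For (5), take $H = \{x^p - x : x \in I_K\} \le I_K$; the kernel $\Ff_p \cap I_K = \{0\}$ (nonzero prime-field elements are not $K$-infinitesimal), so the Artin-Schreier map is injective on $I_K$ and $H$ has dp-rank $\dpr(\Kk)$. For (2), let $J$ be the additive subgroup generated by $\{ab : a, b \in I_K\}$; continuity of multiplication (Corollary~\ref{field-topology-cor}) forces $J \le I_K$, while Proposition~\ref{double-independence} produces independent $a, b \in I_K$ with $\dpr(ab/K) = \dpr(\Kk)$, witnessing that $J$ has full dp-rank.

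The hard part will be verifying type-definability (as opposed to mere $\vee$-definability) of $H$ in (5) and of $J$ in (2), since the compactness step in Theorem~\ref{mult-1}.\ref{min-heavy} requires type-definability and the finite-index trick that works cleanly in (4) is not directly available. The plan is to either invoke the NIP fact that a $\vee$-definable subgroup of bounded index in a type-definable group is type-definable, or to pass to the type-definable hull $\bigcap_{D \supseteq H} D$, apply the additive analog of Theorem~\ref{mult-1}.\ref{min-heavy} there to identify the hull with $I_K$, and then close the gap separately---using the full dp-rank of $H$ together with, in (5), Artin-Schreier closure of NIP fields in positive characteristic---to conclude that the hull already coincides with $H$ itself.
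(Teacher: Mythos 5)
Your treatment of (1), (3), (4) and (5) matches the paper's: (1) is cited from \cite{prdf}, (3) is exactly Theorems~\ref{mult-1}.\ref{m11} and \ref{new-mult-2}, and (4)--(5) are done by observing that the image is a full-rank type-definable subgroup and invoking Theorem~\ref{mult-1}.\ref{min-heavy} (resp.\ its additive analogue, Corollary~6.19 of \cite{prdf}). One remark: the type-definability you worry about in (4) and (5) is automatic --- in a saturated model the image of a type-definable set under a definable map is type-definable by compactness (the map $x \mapsto x^n$ on $1+I_K$, resp.\ $x \mapsto x^p - x$ on $I_K$), so neither the roots-of-unity/finite-index detour in (4) nor any ``gap-closing'' in (5) is needed; finite fibers give full rank of the image and you are done.

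The genuine gap is in (2), and you have correctly located it but not closed it. The additive subgroup $J$ generated by $\{ab : a,b \in I_K\}$ is a countable increasing union of type-definable sets, hence only $\vee$-definable, and neither of your proposed repairs works as stated: the ``bounded index'' fact is circular (that $J$ has bounded index in $I_K$ is essentially what you are trying to prove), and the type-definable hull $\bigcap_{D \supseteq J} D$ need not be a group, so the additive analogue of Theorem~\ref{mult-1}.\ref{min-heavy} does not apply to it. The paper's resolution is to avoid the generated group entirely: take $J_0 = \{xy : x,y \in I_K\}$, the bare set of products, which \emph{is} type-definable and has full rank (e.g.\ because $\epsilon \cdot I_K \subseteq J_0$ for a fixed nonzero $\epsilon \in I_K$). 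By compactness, for any type-definable set one has
\begin{equation*}
  J_0 - J_0 = \bigcap \{X - X : X \supseteq J_0,\ X \text{ $K$-definable}\},
\end{equation*}
and since every such $X$ has full rank, hence is heavy, this intersection contains $\bigcap\{X \mininf X : X \text{ heavy, $K$-definable}\} = I_K$. As $J_0 - J_0$ consists of sums of two products of infinitesimals, $I_K \subseteq J_0 - J_0 \subseteq I_K \cdot I_K$, which finishes (2) without ever needing the full generated subgroup to be type-definable. Your rank computation via Proposition~\ref{double-independence} is also more roundabout than necessary (you need the rank of the \emph{product} $a\cdot b$ over $K$, not of the pair), though it can be salvaged by interdefinability of $(a,b)$ and $(ab,b)$.
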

\begin{proof}~
  \begin{enumerate}
  \item \cite{prdf}, Theorem~6.17
  \item The inclusion $I_K \cdot I_K \subseteq I_K$ holds by
    \cite{prdf}, Corollary~10.5---this is the reason why
    multiplication is continuous in the canonical topology.  For the
    reverse inclusion, let $J$ denote the true product
    \begin{equation*}
      J = \{xy : x, y \in I_K\}.
    \end{equation*}
    Then $J$ is a subset of $\Kk$, type-definable over $K$, of full
    dp-rank.  Therefore
    \begin{align*}
      J - J & = \bigcap \{X - X : X \supseteq J,~X \text{ is $K$-definable}\}
      \\ & \supseteq \bigcap \{X - X : X \text{ is $K$-definable and full rank}\}
      \\ & \supseteq \bigcap \{X \mininf X : X \text{ is $K$-definable and full rank}\}      
      \\ &= I_K.
    \end{align*}
    So $I_K \subseteq J - J \subseteq I_K \cdot I_K$.
  \item Theorems~\ref{mult-1}.\ref{m11} and \ref{new-mult-2}.
  \item The image is a subgroup of $\Kk^\times$, type-definable over
    $K$.  The $n$th power map has finite fibers, so the image has full
    dp-rank, and therefore contains $1 + I_K$ by
    Theorem~\ref{mult-1}.\ref{min-heavy} and
    Theorem~\ref{new-mult-1}.\ref{nm22}.
  \item Similar, using Corollary~6.19 in \cite{prdf}, which is the
    additive analogue of
    Theorem~\ref{mult-1}.\ref{min-heavy}. \qedhere
  \end{enumerate} 
\end{proof}

\section{Multi-valuation rings}\label{sec:mval-review}
We review well-known facts about Bezout domains and multi-valued
fields.
\begin{definition}
  A \emph{multi-valuation ring} on a field $K$ is a finite
  intersection of valuation rings on $K$.
\end{definition}
\begin{proposition} \label{multi-valuation-notes}
  Let $\Oo_1, \ldots, \Oo_n$ be pairwise incomparable valuation rings
  on a field $K$.  Let $\val_i : K \to \Gamma_i$ and $\res_i : \Oo_i
  \to k_i$ be the associated valuation and residue maps.  Let $\mm_i$
  be the maximal ideal of $\Oo_i$. Let $R = \bigcap_i \Oo_i$.
  \begin{enumerate}
  \item \label{mv1} Every finitely generated $R$-submodule of $K$ is
    singly generated.
  \item \label{mv2} In particular, $R$ is a Bezout domain.
  \item \label{mv3} $\Frac(R) = K$.
  \item \label{cut-description} Every $R$-submodule of $K$ is of the
    form
    \begin{equation*}
      \{ x \in K ~|~ \forall i : \val_i(x) > \Xi_i\}
    \end{equation*}
    for certain cuts $\Xi_i$ in the value groups $\Gamma_i$.  (The
    cuts $\Xi_i$ are not uniquely determined.)
  \item \label{mv5} Let $M_i = R \cap \mm_i$.  Then the $M_i$ are
    maximal ideals of $R$, the $M_i$ are pairwise distinct, and there
    are no other maximal ideals of $R$.
  \item \label{mv6} Furthermore, the canonical inclusion of $R/M_i
    \hookrightarrow k_i$ is an isomorphism.
  \item \label{mv7} The localization of $R$ at $M_i$ is exactly
    $\Oo_i$.
  \end{enumerate}
\end{proposition}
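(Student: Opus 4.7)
The plan is to derive all seven items from the classical \emph{approximation theorem for independent valuations}: since the $\Oo_i$ are pairwise incomparable on the same field $K$, they are pairwise independent, so for any targets $\gamma_1, \ldots, \gamma_n$ in the respective value groups there is $x \in K$ with $\val_i(x) = \gamma_i$ for every $i$ simultaneously. Equivalently, given $a_1, \ldots, a_n \in K$, some $b \in K$ can make each $\val_i(b - a_i)$ arbitrarily large.

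For (3), given $a \in K$, pick $b \in K$ with $\val_i(b) = \max(0, -\val_i(a))$ for each $i$; then both $b$ and $ab$ lie in $R$, so $a \in \Frac(R)$. For (1), reduce by induction to two generators $a_1, a_2$, and partition $\{1, \ldots, n\}$ into $S_1 = \{i : \val_i(a_1) < \val_i(a_2)\}$, $S_2 = \{i : \val_i(a_2) < \val_i(a_1)\}$, and $S_{eq}$. Use approximation to build $x \in R$ with $\val_i(x) = 0$ on $S_1$ and $\val_i(x) > 0$ on $S_2 \cup S_{eq}$, and symmetrically $y \in R$ with $\val_i(y) = 0$ on $S_2$ and $\val_i(y) > 0$ on $S_1 \cup S_{eq}$. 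A direct valuation check then shows $g := xa_1 + ya_2$ satisfies $\val_i(g) = \min(\val_i(a_1), \val_i(a_2))$ for all $i$, hence generates $a_1R + a_2R$. This gives (1), and (2) is the special case $a_k \in R$.

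For (4), given an $R$-submodule $M \subseteq K$, set $\Xi_i$ to be the complement in $\Gamma_i$ of $\{\val_i(x) : x \in M \setminus \{0\}\}$; since $\val_i(R) \supseteq \Gamma_i^{\ge 0}$ by approximation, the latter set is an up-set and $\Xi_i$ is a cut. The inclusion $M \subseteq \{x : \forall i,\ \val_i(x) > \Xi_i\}$ is immediate. Conversely, given such an $x$, pick $y_i \in M$ with $\val_i(y_i) \le \val_i(x)$ for each $i$ and apply (1) to find a generator $y$ of $(y_1, \ldots, y_n) R$; then $\val_i(y) \le \val_i(x)$ for all $i$, so $x/y \in R$ and $x \in yR \subseteq M$.

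For (5)--(7), approximation produces separators $e_i \in R$ with $\val_i(e_i) = 0$ and $\val_j(e_i) > 0$ for $j \ne i$, witnessing $M_i \ne M_j$. The canonical map $R/M_i \to \Oo_i/\mm_i$ is injective by definition, and surjective because any $a \in \Oo_i$ can be adjusted by approximation to some $b \in R$ with $\val_i(b - a) > 0$ and $\val_j(b) \ge 0$ for $j \ne i$; this yields (6) and makes $M_i$ maximal. If a maximal ideal $M$ were not contained in any $\mm_i$, we could select $x_i \in M$ with $\val_i(x_i) = 0$ for each $i$ and use (1) to realize $(x_1, \ldots, x_n) R$ as $yR$ with $\val_i(y) \le \val_i(x_i) = 0$; but $y \in R$ also forces $\val_i(y) \ge 0$, so $y \in R^\times \cap M$, a contradiction. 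For (7), $R_{M_i} \subseteq \Oo_i$ because $R \setminus M_i \subseteq \Oo_i^\times$, and conversely any $a \in \Oo_i$ is cleared into $R_{M_i}$ by a $b \in R$ with $\val_i(b) = 0$ and $\val_j(b) \ge \max(0, -\val_j(a))$ for $j \ne i$. The main ``obstacle'' is purely bookkeeping: each step reduces to prescribing valuation data and invoking independent approximation, with no real conceptual difficulty.
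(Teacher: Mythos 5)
Your argument rests on the claim that pairwise incomparable valuation rings are pairwise \emph{independent}, so that one can prescribe the values $\val_i(x)$ arbitrarily and simultaneously. This is false, and it is the crux of the matter. Independence means the only common coarsening is the trivial valuation, which is strictly stronger than incomparability: take a nontrivial valuation ring $\Oo'$ on $K$ with residue field $k'$, let $\bar{\Oo}_1, \bar{\Oo}_2$ be incomparable valuation rings on $k'$, and let $\Oo_1, \Oo_2$ be their pullbacks to $K$. These are incomparable but share the nontrivial coarsening $\Oo'$, and for every $x \in K$ the images of $\val_1(x)$ and $\val_2(x)$ in the value group of $\Oo'$ coincide. (Concretely: $K = \Qq(x,y)$ with $\Oo'$ the $y$-adic ring, and $\bar\Oo_1, \bar\Oo_2$ the $x$-adic and $(x-1)$-adic rings on $\Qq(x)$.) So there is no $x$ with, say, $\val_1(x)=0$ and $\val_2(x)$ equal to a target whose image in $\Gamma'$ is positive; the strong approximation theorem you invoke simply does not apply, and every step of your proof is justified by appeal to it.

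What survives, and what the statement actually requires, is a much weaker approximation principle that must be proved from incomparability alone. The key objects are elements $u_1,\ldots,u_n$ with $\res_i(u_j)=1$ if $i=j$ and $=0$ otherwise (essentially your ``separators'' $e_i$); their existence is true but is the real content here, requiring an inductive construction (for $n=2$ one takes $a \in \Oo_1 \setminus \Oo_2$, $b \in \Oo_2 \setminus \Oo_1$ and forms $b/(a+b)$, with extra care for larger $n$ to avoid poles). From the $u_i$ one gets only a \emph{relative} bump: given $a$ and $S \subseteq \{1,\ldots,n\}$, the element $a\sum_{i\in S}u_i$ has $\val_i$ equal to $\val_i(a)$ for $i \in S$ and strictly larger for $i \notin S$ --- you may raise selected values of a given element, not hit arbitrary targets. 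Your treatments of (1), (3)--(7) all need to be rewritten in these terms: e.g.\ for (1) you cannot ask for $\val_i(x)=0$ on $S_1$ and $\val_i(x)>0$ elsewhere in the abstract; instead one bumps $a_1$ on the set where its value is not uniquely minimal and adds the result to $a_2$. As written, the proposal replaces the only genuinely delicate part of the proposition with a false general principle, so it has a real gap rather than a cosmetic one.
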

\begin{proof}
  We write $\res_i(x) = \infty$ if $x \notin \Oo$.
  \begin{claim}\label{u-claim}
    There are $u_1, \ldots, u_n \in K$ such that
    \begin{equation*}
      \res_i(u_j) = 
      \begin{cases}
        1 & \textrm{ if } i = j \\
        0 & \textrm{ if } i \ne j.
      \end{cases}
    \end{equation*}
  \end{claim}
  \begin{claimproof}
    We proceed by induction on $n$.  By symmetry we only need to find
    $u_1$.  For $n = 1$, take $u_1 = 1$.  For $n = 2$, by
    incomparability we can find $a \in \Oo_1 \setminus \Oo_2$ and $b
    \in \Oo_2 \setminus \Oo_1$.  Then
    \begin{align*}
      \val_1(a) & \ge 0 \\
      \val_1(b) & < 0 \\
      \val_2(a) & < 0 \\
      \val_2(b) & \ge 0.
    \end{align*}
    Therefore
    \begin{align*}
      \val_1(a/b) & > 0 \\
      \val_2(b/a) & > 0 \\
      \res_1(a/b) & = 0 \\
      \res_2(b/a) & = 0.
    \end{align*}
    Set $u_1 = b/(a + b)$.  Then
    \begin{align*}
      \res_1\left(\frac{b}{a + b}\right) &= \res_1\left(\frac{1}{a/b +
        1}\right) = \frac{1}{\res_1(a/b) + 1} = \frac{1}{0 + 1} = 1 \\
      \res_2\left(\frac{b}{a + b}\right) &= \res_2\left(\frac{b/a}{1 +
        b/a}\right) = \frac{\res_2(b/a)}{1 + \res_2(b/a)} = \frac{0}{1
        + 0} = 0.
    \end{align*}
    Finally, suppose $n > 2$. By the inductive hypothesis, we can find
    $v$ such that
    \begin{align*}
      \res_1(v) &= 1 \\
      \res_i(v) &= 0 \qquad \textrm{ for } 1 < i < n
    \end{align*}
    We may assume that $\res_n(v) \ne \infty$; otherwise replace $v$
    with $\frac{v}{v^2 - v + 1}$.  Similarly, we can find $w$ such that
    \begin{itemize}
    \item $\res_1(w) = 1$.
    \item $\res_{n-1}(w) \ne \infty$.
    \item $\res_i(w) = 0$ for $i \in \{2,3 \ldots, n-1,n-2,n\}$.
    \end{itemize}
    Then $u_1 = vw$ has the desired properties.
  \end{claimproof}
  \begin{claim}\label{bump-claim}
    For any $a \in K$ and any $S \subseteq \{1,\ldots,n\}$, we can
    find $a'$ such that
    \begin{align*}
      \val_i(a') &= \val_i(a) \qquad \textrm{ for } i \in S \\
      \val_i(a') &> \val_i(a) \qquad \textrm{ for } i \notin S
    \end{align*}
  \end{claim}
  \begin{claimproof}
    Indeed, take $a' = a \sum_{i \in S} u_i$.
  \end{claimproof}
  \begin{claim}
    For any $a, b \in K$, there is $c \in K$ such that $aR + bR = cR$.
  \end{claim}
  \begin{claimproof}
    We may assume $a, b$ are non-zero. By Claim~\ref{bump-claim} we
    may find $a' \in K$ such that for every $i$, one of the
    following happens
    \begin{align*}
      \val_i(a') = \val_i(a) &\ne \val_i(b) \\
      \val_i(a') > \val_i(a) &= \val_i(b)
    \end{align*}
    Thus for any $i$
    \begin{align*}
      \val_i(a') & \ge \val_i(a) \\
      \val_i(a') & \ne \val_i(b) \\
      \val_i(a' + b) &= \min(\val_i(a'),\val_i(b)) \\ & = \min(\val_i(a),\val_i(b)).
    \end{align*}
    It follows that $a' \in aR$, $a' + b \in aR + bR$, and $a, b \in
    (a'+b)R$.  Thus $aR + bR = (a'+b)R$.
  \end{claimproof}
  This proves points (\ref{mv1}) and (\ref{mv2}).  It also follows
  that $\Frac(R) = K$, point (\ref{mv3}).  Indeed, let $\alpha$ be any element
  of $K$.  Choose $c$ such that $R + \alpha R = cR$.  Then $c^{-1} R +
  c^{-1} \alpha R = R$, and so $c^{-1}$ and $c^{-1} \alpha$ are both
  in $R$; their ratio is $\alpha$.

  Point (\ref{cut-description}), the description of $R$-submodules of $K$, follows as
  well, essentially because every $R$-submodule of $K$ is a filtered
  union of singly-generated $R$-submodules, and a singly-generated
  $R$-submodule $aR \subseteq K$ is of the form
  \begin{equation*}
    \{ x \in K ~|~ \forall i : \val_i(x) \ge \val_i(a)\}
  \end{equation*}
  by definition of $R$.  More precisely, let $M$ be an $R$-submodule
  of $K$.  Let $\Xi_i$ be the largest cut in $\Gamma_i$ such that
  \begin{equation*}
    a \in M \implies \val_i(a) > \Xi_i
  \end{equation*}
  We claim that for any $x \in K$,
  \begin{equation*}
    x \in M \iff \forall i : \val_i(x) > \Xi_i.
  \end{equation*}
  The $\implies$ direction is by choice of $\Xi_i$.  Conversely,
  suppose the right hand side holds.  Choose $a_1, \ldots, a_n \in M$
  such that $\val_i(x) \ge \val_i(a_i)$.  Choose $b \in K$ such that
  \begin{equation*}
    bR = a_1R + \cdots + a_nR
  \end{equation*}
  Then $a_i \in bR \implies \val_i(a_i) \ge \val_i(b)$, and so
  \begin{equation*}
    \val_i(x) \ge \val_i(a_i) \ge \val_i(b).
  \end{equation*}
  As this holds for all $i$, the quotient $x/b$ lies in $R$, so $x \in
  bR$.  Then
  \begin{equation*}
    x \in bR = a_1R + \cdots + a_nR \subseteq M
  \end{equation*}
  so $x \in M$.  This proves point (\ref{cut-description}).
  \begin{claim}
    For each $i$, the map
    \begin{equation*}
      R \hookrightarrow \Oo_i \stackrel{\res_i}{\to} k_i
    \end{equation*}
    is onto.
  \end{claim}
  \begin{claimproof}
    Given $\alpha \in k_i$, we will find $a \in R$ such that
    $\res_i(a) = \alpha$.  We may assume $\alpha \ne 0$.  Take $c \in
    K$ such that $\res_i(c) = 1/\alpha$.  Let $S$ be the set of $j \ne
    i$ such that $\res_j c \ne \infty$.  Let $u_S = \sum_{j \in S}
    u_j$, so that
    \begin{equation*}
      \res_j(u_S) = 
      \begin{cases}
        0 & \textrm{ if } j = i \\
        0 & \textrm{ if } j \ne i \textrm{ and } \res_j(c) = \infty \\
        1 & \textrm{ if } j \ne i \textrm{ and } \res_j(c) \ne \infty.
      \end{cases}
    \end{equation*}
    Let $d = \frac{u_S}{1 - u_S}$, so
    \begin{equation*}
      \res_i(d) = \begin{cases}
        0 & \textrm{ if } j = i \\
        0 & \textrm{ if } j \ne i \textrm{ and } \res_j(c) = \infty \\
        \infty & \textrm{ if } j \ne i \textrm{ and } \res_j(c) \ne \infty.
      \end{cases}
    \end{equation*}
    Then
    \begin{equation*}
      \res_i(c + d) = \begin{cases}
        1/\alpha & \textrm{ if } j = i \\
        \infty & \textrm{ if } j \ne i \textrm{ and } \res_j(c) = \infty \\
        \infty & \textrm{ if } j \ne i \textrm{ and } \res_j(c) \ne \infty.
      \end{cases}
    \end{equation*}
    Therefore $1/(c + d)$ is in $R$ and $\res_i(1/(c + d)) = \alpha$.
  \end{claimproof}
  The kernel of the map $R \to k_i$ is exactly the set $M_i$, so we
  have shown that $M_i$ are maximal ideals and the canonical
  inclusions $R/M_i \hookrightarrow k_i$ are isomorphisms (point (\ref{mv6})).
  To complete point (\ref{mv5}), we must show that the $M_i$ are distinct and
  that there are no other maximal ideals.

  The $u_i$ of Claim~\ref{u-claim} show that the $M_i$ are pairwise
  distinct: note that $u_i \in M_j \iff i \ne j$.

  Suppose there is some maximal ideal $M_{n+1}$ outside of $\{M_1,
  \ldots, M_n\}$.  By the Chinese remainder theorem we can find $u \in
  R$ such that
  \begin{align*}
    u & \equiv 1 \pmod{M_i} \qquad i \le n \\
    u & \equiv 0 \pmod{M_{n+1}}.
  \end{align*}
  Then $\res_i(u-1) = 0$ for each $i \le n$, so $\res_i(u) = 1$ and
  $\res_i(u^{-1}) = 1$, implying that $u^{-1} \in R$.  But $u \in
  M_{n+1}$, a contradiction.  So $R$ has only the $n$ maximal ideals.
  This completes the proof of point (\ref{mv5}).

  Lastly, we show point (\ref{mv7}): the localization $R_{M_i}$ is exactly
  $\Oo_i$.  By definition,
  \begin{equation*}
    R_{M_i} = \left\{ \frac{a}{s}  :  a \in R, ~ s \in R \setminus M_i\right\}
  \end{equation*}
  Then $a \in \Oo_i$ and $s \in \Oo_i \setminus \mm_i$ by definition
  of $R$ and $M_i$.  So $a/s \in \Oo_i$, as $\Oo_i$ is a local ring.
  Therefore $R_{M_i} \subseteq \Oo_i$.

  Conversely, suppose $b$ is some element of $\Oo_i$.  As in the proof
  that $\Frac(R) = K$, we can find $a, s \in R$ such that
  \begin{align*}
    aR + sR &= R \\
    a/s &= b.
  \end{align*}
  The first equation implies that
  \begin{equation*}
    a\Oo_i + s\Oo_i = \Oo_i,
  \end{equation*}
  which means exactly that $0 = \min(\val_i(a),\val_i(s))$.  Now since
  $b \in \Oo_i$, we have
  \begin{equation*}
    \val_i(a) - \val_i(s) = \val_i(a/s) = \val_i(b) \ge 0,
  \end{equation*}
  and so
  \begin{equation*}
    0 = \min(\val_i(a),\val_i(s)) = \val_i(s),
  \end{equation*}
  which means exactly that $s \in R \setminus M_i$.  Therefore $b =
  a/s \in R_{M_i}$, completing the proof of (\ref{mv7}).
\end{proof}

\begin{corollary}\label{decomposition}
  If $R$ is a multi-valuation ring on a field $K$, then there is a
  unique way to write $R$ as a finite intersection of
  pairwise-incomparable valuation rings on $K$:
  \begin{equation*}
    R = \Oo_1 \cap \cdots \cap \Oo_n.
  \end{equation*}
  Moreover, these $\Oo_i$ are exactly the localizations of $R$ at its
  maximal ideals.
\end{corollary}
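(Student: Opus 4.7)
The plan is to derive both the existence and uniqueness of a pairwise-incomparable decomposition directly from Proposition~\ref{multi-valuation-notes}, which has already done most of the work. The key observation is that once any pairwise-incomparable representation is in hand, points~(\ref{mv5}) and~(\ref{mv7}) of that proposition identify the constituent valuation rings intrinsically, in a way that depends only on $R$ and not on the chosen representation.

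First I would establish existence of a pairwise-incomparable representation. Start from any expression $R = \Oo_1 \cap \cdots \cap \Oo_m$ witnessing that $R$ is a multi-valuation ring. If $\Oo_i \subseteq \Oo_j$ for some $i \ne j$, then $\Oo_i \cap \Oo_j = \Oo_i$, so removing $\Oo_j$ from the list does not change the intersection. Iterate this reduction (the list is finite, so the process terminates) until no $\Oo_j$ contains any other, producing a pairwise-incomparable representation $R = \Oo_1 \cap \cdots \cap \Oo_n$.

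Next I would invoke Proposition~\ref{multi-valuation-notes} applied to this pairwise-incomparable representation. Point~(\ref{mv5}) says that the maximal ideals of $R$ are exactly $M_1, \ldots, M_n$, where $M_i := R \cap \mm_i$, and they are pairwise distinct. Point~(\ref{mv7}) then says that $R_{M_i} = \Oo_i$ for each $i$. This proves the ``moreover'' clause: the $\Oo_i$ appearing in any pairwise-incomparable decomposition are exactly the localizations of $R$ at its maximal ideals.

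For uniqueness, suppose $R = \Oo_1 \cap \cdots \cap \Oo_n = \Oo'_1 \cap \cdots \cap \Oo'_{n'}$ are two pairwise-incomparable decompositions. By the previous paragraph, both $\{\Oo_1, \ldots, \Oo_n\}$ and $\{\Oo'_1, \ldots, \Oo'_{n'}\}$ coincide with the set $\{R_M : M \text{ a maximal ideal of } R\}$, so they must be equal as (unordered) collections. In particular $n = n'$, and the decomposition is unique up to reordering. There is no real obstacle to this proof: all the substantive content is already contained in Proposition~\ref{multi-valuation-notes}, and the only nontrivial step is the trivial reduction to a pairwise-incomparable list.
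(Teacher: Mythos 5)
Your proof is correct and is essentially the paper's own argument: discard redundant (comparable) valuation rings to get a pairwise-incomparable representation, then use Proposition~\ref{multi-valuation-notes}.\ref{mv5} and \ref{mv7} to identify the constituents intrinsically as the localizations of $R$ at its maximal ideals, which gives uniqueness. You have simply written out the details more fully than the paper does.
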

\begin{proof}
  Given $R = \Oo_1 \cap \cdots \cap \Oo_n$, we can discard irrelevant
  $\Oo_i$'s and arrange for the $\Oo_i$'s to be pairwise incomparable,
  proving existence.  Uniqueness follows from the ``moreover'' clause,
  which in turn follows from
  Proposition~\ref{multi-valuation-notes}.\ref{mv5},\ref{mv7}.
\end{proof}

\begin{corollary}\label{finite-chains}
  Let $\Oo_1, \ldots, \Oo_n$ be pairwise-incomparable valuation rings
  on $K$, and let $\Oo$ be some other valuation ring.  Then
  \begin{equation*}
    \Oo \supseteq \Oo_1 \cap \cdots \cap \Oo_n
  \end{equation*}
  if and only if $\Oo \supseteq \Oo_i$ for some $i$.
\end{corollary}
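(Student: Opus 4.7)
The backward direction is trivial, so the work is in showing that if $\Oo \supseteq R := \Oo_1 \cap \cdots \cap \Oo_n$, then $\Oo$ contains some $\Oo_i$. My plan is to exploit the fact that $\Oo$ is local together with the description of the maximal ideals of the Bezout domain $R$ furnished by Proposition~\ref{multi-valuation-notes}. In particular, I want to identify each $\Oo_i$ with the localization $R_{M_i}$ (point \ref{mv7} of that proposition) and then show that any valuation overring of $R$ contains one of these localizations.

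Here are the steps I would carry out. Let $\mm$ denote the maximal ideal of $\Oo$ and set $\pp := \mm \cap R$, a prime ideal of $R$. First, since $R$ has exactly the maximal ideals $M_1,\ldots,M_n$ by Proposition~\ref{multi-valuation-notes}.\ref{mv5}, the prime $\pp$ is contained in some $M_i$. Second, because $R \setminus M_i \subseteq R \setminus \pp$, localization gives $R_{M_i} \subseteq R_\pp$. Third, for any $s \in R \setminus \pp$ we have $s \notin \mm$, so $s \in \Oo^\times$; this yields $R_\pp \subseteq \Oo$. Combining these with Proposition~\ref{multi-valuation-notes}.\ref{mv7}, which identifies $R_{M_i}$ with $\Oo_i$, we conclude $\Oo_i = R_{M_i} \subseteq R_\pp \subseteq \Oo$.

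There is no real obstacle here; the proof is essentially a bookkeeping exercise, and the only mildly subtle point is getting the direction of the inclusion right when passing between $R_\pp$ and $R_{M_i}$ (localizing at a smaller multiplicative set produces a smaller ring). Everything else is immediate from Proposition~\ref{multi-valuation-notes}, and no additional theory is required.
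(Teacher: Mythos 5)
Your proof is correct, but it takes a different route from the paper's. The paper deduces the corollary from Corollary~\ref{decomposition} (uniqueness of the pairwise-incomparable decomposition): if $\Oo \supseteq R$ then $R = \Oo_1 \cap \cdots \cap \Oo_n \cap \Oo$, so by uniqueness the enlarged family cannot be pairwise incomparable, and the case $\Oo \subsetneq \Oo_i$ is then ruled out by a second appeal to uniqueness. You instead run the classical commutative-algebra argument that a local overring of a semilocal domain contains a localization at a prime: contract the maximal ideal $\mm$ of $\Oo$ to a prime $\pp = \mm \cap R$, place $\pp$ inside some maximal ideal $M_i$ (Proposition~\ref{multi-valuation-notes}.\ref{mv5}), and chain the inclusions $\Oo_i = R_{M_i} \subseteq R_\pp \subseteq \Oo$ using Proposition~\ref{multi-valuation-notes}.\ref{mv7} and the fact that elements of $R \setminus \pp$ are units in $\Oo$. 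Each step checks out, including the direction of the localization inclusion. Your argument has the advantage of not needing the uniqueness statement at all (only points \ref{mv5} and \ref{mv7} of the proposition), and it exposes the mechanism more explicitly; the paper's argument is shorter given that uniqueness has just been established, and is the natural follow-up to Corollary~\ref{decomposition} in the exposition.
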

\begin{proof}
  Assume $\Oo \supseteq \Oo_1 \cap \cdots \cap \Oo_n$.  Then
  \begin{equation*}
    \Oo_1 \cap \cdots \cap \Oo_n = \Oo_1 \cap \cdots \cap \Oo_n \cap \Oo,
  \end{equation*}
  so by the uniqueness, the set $\{\Oo_1, \ldots, \Oo_n, \Oo\}$ cannot
  be pairwise incomparable.  Therefore $\Oo \supseteq \Oo_i$ or $\Oo
  \subseteq \Oo_i$.  If, say, $\Oo \subsetneq \Oo_1$ then
  \begin{equation*}
    \Oo_1 \cap \cdots \cap \Oo_n = \Oo_1 \cap \cdots \cap \Oo_n \cap
    \Oo = \Oo \cap \Oo_2 \cap \cdots \cap \Oo_n
  \end{equation*}
  again contradicting uniqueness.  So $\Oo \supseteq \Oo_i$ for some
  $i$.
\end{proof}

\begin{proposition}\label{alt-characterization}
  $R$ is a multi-valuation ring on $K$ if and only if
  \begin{enumerate}
  \item $\Frac(R) = K$.
  \item $R$ is a Bezout domain.
  \item $R$ has finitely many maximal ideals.
  \end{enumerate}
\end{proposition}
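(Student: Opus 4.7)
The plan is to handle the two directions separately. For the forward direction, if $R = \Oo_1 \cap \cdots \cap \Oo_n$ is a multi-valuation ring, after discarding redundant $\Oo_i$ we may assume they are pairwise incomparable; then the three conditions are just Proposition~\ref{multi-valuation-notes}.\ref{mv2}, \ref{mv3}, \ref{mv5}. So the real work is the converse.

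For the reverse direction, suppose $R$ is a Bezout domain with $\Frac(R) = K$ and only finitely many maximal ideals $M_1, \ldots, M_n$. The natural candidates are the localizations $\Oo_i := R_{M_i}$, and I would check that these are valuation rings on $K$ whose intersection is $R$. The first step is the classical observation that a local Bezout domain is a valuation ring: given $a, b \in \Oo_i$ not both zero, write $aR + bR = cR$, so $c = au + bv$ for some $u, v \in R$, and $a = cx$, $b = cy$ for some $x, y \in R$. Canceling $c$ yields $1 = xu + yv$ in $R$, and after further localizing at $M_i$ one of $xu, yv$ becomes a unit in $\Oo_i$, forcing $a \mid b$ or $b \mid a$ in $\Oo_i$. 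So each $\Oo_i$ is a valuation ring, with fraction field $K$ since $\Frac(R) = K$.

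Next I would verify $R = \bigcap_i \Oo_i$. The inclusion $\subseteq$ is trivial. For $\supseteq$, given $x \in \bigcap_i R_{M_i}$, consider the ideal
\begin{equation*}
  J = \{r \in R : r x \in R\}
\end{equation*}
(which makes sense because $\Frac(R) = K$). If $J \neq R$, then $J$ is contained in some $M_i$; but $x \in R_{M_i}$ gives $x = c/s$ with $c \in R$ and $s \in R \setminus M_i$, hence $s \in J \setminus M_i$, a contradiction. Thus $1 \in J$, meaning $x \in R$. This finishes the proof that $R$ is a multi-valuation ring. If desired, one can further note that $\Oo_i \subseteq \Oo_j$ would force $M_j = \mm_j \cap R \subseteq \mm_i \cap R = M_i$ and hence $M_i = M_j$ by maximality, so the $\Oo_i$ are automatically pairwise incomparable, matching Corollary~\ref{decomposition}.

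There is no real obstacle here; the only nonobvious step is the local-Bezout-implies-valuation observation, and even that is a two-line manipulation. The result is essentially a repackaging of the standard fact that $R = \bigcap_{\mm} R_\mm$ for a domain, combined with the Bezout hypothesis to make the local pieces into valuation rings.
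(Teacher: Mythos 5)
Your proof is correct and takes essentially the same approach as the paper: the forward direction is the same appeal to Proposition~\ref{multi-valuation-notes}, and your localization argument for the converse (local Bezout domains are valuation rings, and a domain is the intersection of its localizations at maximal ideals) is exactly the content of the external citation the paper uses for that direction --- the same elementary facts the paper itself sketches in the proof of Proposition~\ref{superring}. Nothing to fix.
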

\begin{proof}
  If the conditions hold, then $R$ is a multi-valuation ring by
  \cite{prdf}, Remark~10.27.  Conversely, if $R$ is a multi-valuation
  ring, we may write $R = \Oo_1 \cap \cdots \cap \Oo_n$ where the
  $\Oo_i$ are pairwise incomparable.  Then the conditions hold by
  Proposition~\ref{multi-valuation-notes}.\ref{mv2},\ref{mv3},\ref{mv5}.
\end{proof}

\begin{proposition}\label{superring}
  Any superring of a multi-valuation ring is a multi-valuation ring.
\end{proposition}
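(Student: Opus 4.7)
The plan is to use the cut description of $R$-submodules of $K$ from Proposition~\ref{multi-valuation-notes}.\ref{cut-description} together with the extra rigidity that comes from a superring being a subring of $K$, not merely an $R$-module.

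Write $R = \Oo_1 \cap \cdots \cap \Oo_n$ with the $\Oo_i$ pairwise incomparable (Corollary~\ref{decomposition}), and let $\val_i \colon K^\times \to \Gamma_i$ be the associated valuations. Any superring $S \supseteq R$ sits inside $K = \Frac(R)$ and is in particular an $R$-submodule of $K$, so by Proposition~\ref{multi-valuation-notes}.\ref{cut-description} we may write
\[
S = \{x \in K : \val_i(x) > \Xi_i \text{ for all } i\}
\]
for certain cuts $\Xi_i$ in $\Gamma_i$. Setting $U_i := \{\gamma \in \Gamma_i : \gamma > \Xi_i\}$, the hypotheses that $S$ is a ring containing $1$ translate into: $U_i$ is upward closed, contains $0$, and is closed under addition.

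I would next show that any such $U_i$ is either all of $\Gamma_i$ (in which case the $i$th constraint is vacuous) or has the form $H_i \cup \{\gamma : \gamma > H_i\}$ for a convex subgroup $H_i \subseteq \Gamma_i$. Take $H_i := U_i \cap (-U_i)$ and verify: $H_i$ is a subgroup (from $U_i + U_i \subseteq U_i$), $H_i$ is convex (from upward closedness), and every $\gamma \in U_i \setminus H_i$ satisfies $\gamma > H_i$, because $-\gamma \notin U_i$ forces $-\gamma < h$ for every $h \in H_i$. In the second case, the constraint $\val_i(x) \in U_i$ cuts out the valuation ring of $K$ associated to the coarsening $\Gamma_i \twoheadrightarrow \Gamma_i/H_i$, which is an overring of $\Oo_i$. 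Discarding the indices with $U_i = \Gamma_i$ then exhibits $S$ as a finite intersection of valuation rings on $K$, as desired.

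The only real work lies in the classification of upward-closed, additively-closed subsets of a totally ordered abelian group containing $0$; this is the main (and only nontrivial) technical point, but it is entirely elementary, so no serious obstacle is expected.
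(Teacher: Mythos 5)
Your argument is correct, but it takes a genuinely different route from the paper's. The paper first transfers the Bezout property from $R$ to the superring $R'$ (lifting the identities $b = \sum_i a_i c_i$ and $a_i = d_i b$ from $R$ to $R'$), concludes that $R'$ is the intersection of all valuation rings containing it, and then uses Corollary~\ref{finite-chains} to see that the poset of valuation overrings of $R$ is a finite union of chains, whose intersections are again valuation rings. You instead apply the submodule classification of Proposition~\ref{multi-valuation-notes}.\ref{cut-description} directly to $S$ and read the decomposition off from the structure of the sets $U_i$. One step you should make explicit: the implication ``$S$ is a ring $\Rightarrow U_i + U_i \subseteq U_i$'' is not a formal consequence of $S \cdot S \subseteq S$ for an arbitrary choice of the (non-unique) cuts $\Xi_i$; you must take the canonical choice, under which $U_i$ is the upward closure of $\val_i(S \setminus \{0\})$ in $\Gamma_i$ (this is the choice made in the paper's proof of point~\ref{cut-description}). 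With that choice the translation works: given $\alpha, \beta \in U_i$, pick $a, b \in S \setminus \{0\}$ with $\val_i(a) \le \alpha$ and $\val_i(b) \le \beta$; then $ab \in S \setminus \{0\}$ and $\val_i(ab) = \val_i(a) + \val_i(b) \le \alpha + \beta$, so $\alpha + \beta \in U_i$ by upward closure, and $0 \in U_i$ since $1 \in S$. The remaining classification of upward-closed, additively closed subsets containing $0$ is elementary, exactly as you verify. Your approach buys a more explicit conclusion --- the valuation rings occurring in the decomposition of $S$ are identified as the coarsenings of the $\Oo_i$ by the convex subgroups $H_i$ --- whereas the paper's approach re-establishes the Bezout property for the superring along the way, which it wants for other purposes anyway (cf.\ Proposition~\ref{alt-characterization}).
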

\begin{proof}
  Let $R$ be a multi-valuation ring on $K$ and let $R'$ be a larger
  multi-valuation ring on $K$.  (So $R \subseteq R' \subseteq K$.)  By
  Proposition~\ref{multi-valuation-notes}.\ref{mv3},
  \begin{equation*}
    \Frac(R') \supseteq \Frac(R) = K,
  \end{equation*}
  so $\Frac(R') = K$.

  We claim that $R'$ is a Bezout domain.  Let $(a_1,\ldots,a_n)$ be a
  finitely generated ideal in $R'$.  By
  Proposition~\ref{multi-valuation-notes}.\ref{mv1}, there is some $b
  \in K$ such that
  \begin{equation*}
    a_1R + \cdots + a_nR = bR.
  \end{equation*}
  Thus, there are $c_1, \ldots, c_n$ and $d_1, \ldots, d_n$ in $R$
  such that
  \begin{equation*}
    b = a_1c_1 + \cdots + a_nc_n
  \end{equation*}
  \begin{equation*}
    a_i = d_ib.
  \end{equation*}
  Now the $c_i$ and $d_i$ are in $R'$, so
  \begin{equation*}
    a_1R' + \cdots + a_nR' = bR'.
  \end{equation*}
  Thus $R'$ is a Bezout domain with $\Frac(R') = K$.  Because $R'$ is
  a domain, $R'$ is the intersection of the localizations $R'_\mm$ at
  the maximal ideals $\mm$ of $R'$, and because $R'$ is a Bezout
  domain these localizations are valuation rings.  (These facts are
  elementary.  See \cite{prdf}, Remark~10.27, for example.)  So $R'$
  is the intersection of all valuation rings containing $R'$.

  We claim that the poset of valuation rings containing $R'$ is a
  finite union of chains.  It suffices to show the same fact for $R$,
  whose corresponding poset is bigger.  Writing $R = \Oo_1 \cap \cdots
  \cap \Oo_n$, we know by Corollary~\ref{finite-chains} that every
  valuation ring containing $R$ contains one of the $\Oo_i$.  For each
  $i$, the valuation rings containing $\Oo_i$ are the coarsenings of
  $\Oo_i$, which are totally ordered.  So the poset is a finite union
  of chains.

  Now the intersection of a chain of valuation rings on $K$ is a
  valuation ring on $K$, so it follows that $R'$ is a finite
  intersection of valuation rings.
\end{proof}

\begin{corollary}\label{lcming}
  Let $\Oo_1, \ldots, \Oo_n$ be pairwise incomparable valuation rings
  on $K$.  Let $\val_i : K \to \Gamma_i$ be the associated valuations.
  Given non-zero $a_1, \ldots, a_n$, there exist non-zero $b, c$ such
  that for every $i$,
  \begin{align*}
    \val_i(b) = \min(\val_i(a_1),\ldots,\val_i(a_n)) \\
    \val_i(c) = \max(\val_i(a_1),\ldots,\val_i(a_n)).
  \end{align*}
\end{corollary}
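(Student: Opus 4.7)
The plan is to derive the existence of $b$ directly from the Bezout property (Proposition~\ref{multi-valuation-notes}.\ref{mv1}) applied to the multi-valuation ring $R = \Oo_1 \cap \cdots \cap \Oo_n$, and then obtain $c$ by applying the $b$-case to the inverses $a_1^{-1}, \ldots, a_n^{-1}$ and inverting the result.

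More precisely, by Proposition~\ref{multi-valuation-notes}.\ref{mv1}, the finitely generated $R$-submodule $a_1 R + \cdots + a_n R$ is singly generated, say as $bR$ for some non-zero $b \in K$. From $a_j \in bR$ we read off $\val_i(a_j) \ge \val_i(b)$ for every $i,j$, and hence $\val_i(b) \le \min_j \val_i(a_j)$. Conversely, writing $b = \sum_j r_j a_j$ with $r_j \in R$, and using $\val_i(r_j) \ge 0$, we obtain the reverse inequality
\begin{equation*}
  \val_i(b) \ge \min_j \val_i(r_j a_j) \ge \min_j \val_i(a_j).
\end{equation*}
Thus $\val_i(b) = \min_j \val_i(a_j)$ for each $i$, as wanted.

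For the maximum, apply the same argument with the tuple $(a_1^{-1}, \ldots, a_n^{-1})$ in place of $(a_1, \ldots, a_n)$: there is a non-zero $b' \in K$ with
\begin{equation*}
  \val_i(b') = \min_j \val_i(a_j^{-1}) = -\max_j \val_i(a_j)
\end{equation*}
for every $i$. Setting $c = 1/b'$ gives $\val_i(c) = \max_j \val_i(a_j)$, as required. There is no real obstacle here: the content of the statement is exactly the Bezout property of $R$ (which was the hard work of Proposition~\ref{multi-valuation-notes}), combined with the elementary observation that for valuations, $\min$ and $\max$ are interchanged by the involution $x \mapsto x^{-1}$.
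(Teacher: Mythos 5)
Your argument is correct and is essentially the paper's own proof: both take $b$ with $bR = a_1R + \cdots + a_nR$ from Proposition~\ref{multi-valuation-notes}.\ref{mv1}, read off $\val_i(b) = \min_j \val_i(a_j)$ from the two containments, and obtain $c$ by applying the same to the $a_j^{-1}$. The only cosmetic difference is that the paper routes the valuation computation through the equality $b\Oo_j = a_1\Oo_j + \cdots + a_n\Oo_j$, whereas you write out the inequalities directly.
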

\begin{proof}
  We prove the existence of $b$; the existence of $c$ follows by
  replacing the $a_i$ with $a_i^{-1}$.  Let $R$ be the intersection
  $\bigcap_i \Oo_i$.  By Proposition~\ref{multi-valuation-notes}.1
  there is $b$ such that
  \begin{equation*}
    bR = a_1R + \cdots + a_nR.
  \end{equation*}
  Note that $b \ne 0$, because the right hand side is nonzero.  There
  exist $c_1, \ldots, c_n, d_1, \ldots, d_n \in R$ such that
  \begin{align*}
    b &= a_1c_1 + \cdots + a_nc_n \\
    a_i &= bd_i.
  \end{align*}
  The $c_i, d_i$ lie in $\Oo_j$, so
  \begin{equation*}
    b\Oo_j = a_1\Oo_j + \cdots + a_n\Oo_j
  \end{equation*}
  holds as well.  This implies exactly that
  \begin{equation*}
    \val_i(b) = \min(\val_i(a_1),\ldots,\val_i(a_n)). \qedhere
  \end{equation*}
\end{proof}

We shall need the following variant of the Chinese remainder theorem:
\begin{proposition}\label{prop:crt}
  Let $\Oo_1, \ldots, \Oo_n$ be pairwise incomparable valuation rings
  on $K$, with maximal ideals $\mm_i$ and residue fields $k_i$.  For
  any tuple $\vec{a} \in \prod_{i = 1}^n k_i$, there is $b \in
  \bigcap_{i = 1}^n \Oo_i$ such that
  \begin{equation*}
    b \equiv a_i \pmod{\mm_i}
  \end{equation*}
  for $i = 1, \ldots, n$.
\end{proposition}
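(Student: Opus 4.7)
The plan is to reduce this to the standard Chinese remainder theorem applied to the multi-valuation ring $R = \bigcap_{i=1}^n \Oo_i$, leveraging the description of $R$'s maximal ideals already established in Proposition~\ref{multi-valuation-notes}.

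First, I would set $M_i = R \cap \mm_i$ for each $i$. By Proposition~\ref{multi-valuation-notes}(\ref{mv5}), the $M_i$ are exactly the maximal ideals of $R$, and they are pairwise distinct. By Proposition~\ref{multi-valuation-notes}(\ref{mv6}), the inclusion $R \hookrightarrow \Oo_i$ followed by the residue map $\Oo_i \to k_i$ induces a canonical isomorphism $R/M_i \xrightarrow{\sim} k_i$. In particular, any element $a_i \in k_i$ lifts to some $\tilde{a}_i \in R$ with $\tilde{a}_i \equiv a_i \pmod{\mm_i}$.

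Next, since the $M_i$ are distinct maximal ideals of $R$, they are pairwise coprime: $M_i + M_j = R$ for $i \ne j$. The standard Chinese remainder theorem for commutative rings then says that the natural map
\begin{equation*}
  R \longrightarrow \prod_{i=1}^n R/M_i
\end{equation*}
is surjective. Composing with the isomorphisms $R/M_i \cong k_i$ from point (\ref{mv6}), the induced map $R \to \prod_{i=1}^n k_i$ is also surjective. Thus there is $b \in R$ mapping to $(a_1, \ldots, a_n)$, which means $b \in \bigcap_i \Oo_i$ and $b - a_i \in \mm_i$ for each $i$.

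There is no serious obstacle here: the only things to check are that the $M_i$ are coprime (immediate from maximality and distinctness) and that the residue maps $R/M_i \to k_i$ are isomorphisms (already done in the earlier proposition). The proof is essentially a one-line invocation of CRT together with point (\ref{mv6}) of Proposition~\ref{multi-valuation-notes}.
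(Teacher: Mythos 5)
Your proof is correct and follows essentially the same route as the paper's: lift each $a_i$ to $R$ using the isomorphism $R/M_i \cong k_i$ from Proposition~\ref{multi-valuation-notes}(\ref{mv6}), then apply the ordinary Chinese remainder theorem to the distinct maximal ideals $M_i$ supplied by point (\ref{mv5}). No issues.
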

\begin{proof}
  Let $R = \bigcap_{i = 1}^n \Oo_i$ and $M_i = \mm_i \cap R$.  By
  Proposition~\ref{multi-valuation-notes}.\ref{mv6}, we can find
  $c_1,\ldots,c_n \in R$ such that
  \begin{equation*}
    c_i \equiv a_i \pmod{\mm_i}.
  \end{equation*}
  By Proposition~\ref{multi-valuation-notes}.\ref{mv5}, the ideals
  $M_1,\ldots,M_n$ are pairwise distinct maximal ideals.  By the usual
  Chinese remainer theorem, there is $b \in R$ such that
  \begin{equation*}
    b \equiv c_i \pmod{M_i}
  \end{equation*}
  for each $i$.  Then
  \begin{equation*}
    b \equiv c_i \equiv a_i \pmod{\mm_i}
  \end{equation*}
  for all $i$.
\end{proof}

\section{Valuation-type dp-finite fields}\label{sec:val-type}
Let $\Kk$ be an unstable dp-finite field, let $K \preceq \Kk$ be a
small submodel, and $I_K$ be the group of $K$-infinitesimals.  We
have shown that the canonical topology on $K$ is a Hausdorff,
non-discrete field topology.  In terms of $I_K$, this means that
\begin{itemize}
\item $I_K \cap K = \{0\}$
\item $I_K \ne \{0\}$
\item $I_K \le (\Kk,+)$
\item $1 + I_K \le (\Kk,\times)$
\item $K \cdot I_K \subseteq I_K$.
\end{itemize}
By standard non-standard arguments, the following are equivalent:
\begin{itemize}
\item The canonical topology is a V-topology.  In other words, for any
  basic neighborhood $U \ni 0$, there is a basic neighborhood $V \ni
  0$ such that
  \begin{equation*}
    \forall x, y \in K : (x \notin U \wedge y \notin U \implies x
    \cdot y \notin V)
  \end{equation*}
\item $\Kk \setminus I_K$ is closed under multiplication.
\end{itemize}
By an algebraic exercise, the latter condition is in turn equivalent
to $I_K$ being the maximal ideal of a valuation ring on $\Kk$.

Recall from Corollary~\ref{alternative-bases} that the following
family of sets is a neighborhood basis of 0 in the canonical topology
on $K$:
\begin{equation*}
  \mathcal{F} = \{H(K) - H(K) : H \subseteq \Kk \textrm{ heavy and
    $K$-definable}\}.
\end{equation*}
\begin{lemma}\label{lem:v-top-uniformity}
  If the canonical topology on $K$ is a V-topology, then there is a
  uniformly definable family of heavy sets $\{H_b\}_{b \in Y}$ such
  that $\{(H_b - H_b)(K)\}_{b \in Y(K)}$ form a neighborhood basis of
  0.  The family $\{H_b\}_{b \in Y}$ can be chosen to be 0-definable.
\end{lemma}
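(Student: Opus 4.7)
My plan is to use the $\emptyset$-definability of heaviness (Theorem~\ref{new-mult-1}) together with the V-topology hypothesis. For any formula $\varphi(x;\vec y)$, the parameter set $Y_\varphi=\{\vec b:\varphi(\Kk;\vec b)\text{ is heavy}\}$ is $\emptyset$-definable, so each formula $\varphi$ yields a $\emptyset$-definable family of heavy sets. The problem thus reduces to finding a single formula $\varphi$ whose $K$-parameter instances are cofinal with the canonical neighborhood basis $\mathcal F$ of $0$ on $K$; the resulting $Y = Y_\varphi$ and $H_{\vec b} = \varphi(\Kk;\vec b)$ then witness both the uniform definability and $\emptyset$-definability claims at once.

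First, I would observe that under V-topology the valuation ring $\Oo$ on $\Kk$ with maximal ideal $I_K$ is in fact $K$-definable. The set $\Oo = \{x \in \Kk : x \cdot I_K \subseteq I_K\}$ is type-definable over $K$ by construction; meanwhile its complement $\Kk \setminus \Oo = (I_K \setminus \{0\})^{-1}$ is also type-definable over $K$, being the image of the type-definable set $I_K \setminus \{0\}$ under inversion. In the saturated $\Kk$, a standard compactness argument then forces $\Oo$ to be $K$-definable. Note that $\Oo$ is heavy, since it contains $1 + I_K$, which has full dp-rank by Proposition~\ref{prop:prop} and Theorem~\ref{new-mult-1}. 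Fix a formula $\varphi_0(x;\vec y)$ and parameter $\vec b_0 \in K$ with $\Oo = \varphi_0(\Kk;\vec b_0)$.

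Next, I would pass to the enlarged $\emptyset$-definable family that combines scalings with variation of the heavy parameter:
\begin{equation*}
\psi(x;(\vec b,a)) \equiv a \cdot x \in \varphi_0(\Kk;\vec b), \qquad Y_\psi = Y_{\varphi_0} \times \Kk^\times.
\end{equation*}
Each instance $\psi(\Kk;(\vec b,a)) = a^{-1} \cdot \varphi_0(\Kk;\vec b)$ is an affine image of a heavy set, hence heavy by \cite{prdf}, Theorem~4.20. To verify the neighborhood-basis condition, one must show that for every heavy $K$-definable $X$, some $(\vec b,a) \in Y_\psi(K)$ satisfies $(a^{-1}\varphi_0(\Kk;\vec b) - a^{-1}\varphi_0(\Kk;\vec b)) \cap K \subseteq X(K) - X(K)$. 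In a V-topology the neighborhood filter of 0 is totally preordered via the implicit valuation $v_K$ on $K$, and this should make a one-parameter scaling family cofinal.

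The main obstacle is precisely this last cofinality verification. Since $K \subseteq \Oo^\times$, scaling $\Oo$ by elements of $K^\times$ is \emph{trivial}, so the instance $\vec b = \vec b_0$ alone does not produce genuinely shrinking neighborhoods; one is forced to vary $\vec b \in Y_{\varphi_0}(K)$ in order to get strictly smaller heavy subsets of $\Oo$. The technical heart of the argument is to combine the V-topology axiom with the rich supply of $K$-definable heavy subsets of $\Oo$ (produced via saturation and the type-definability of $v_K$-balls) to demonstrate that some $K$-parameterized instance of $\psi$ is contained in any prescribed neighborhood of $0$. I expect this step to rest on a compactness argument parallel to the one used to show $\Oo$ is $K$-definable, applied now uniformly across the $v_K$-value group.
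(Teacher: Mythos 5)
There is a genuine gap at the very first step: the valuation ring $\Oo_K$ whose maximal ideal is $I_K$ is \emph{not} $K$-definable in general, and your argument for its definability is incorrect. The set $\{x \in \Kk : x \cdot I_K \subseteq I_K\}$ is not type-definable ``by construction'': a universally quantified condition ranging over a type-definable set unwinds, by compactness, to an intersection of $\vee$-definable sets, not an intersection of definable ones. What your complement computation actually shows is only that $\Oo_K$ is $\vee$-definable over $K$ (its complement $(I_K \setminus \{0\})^{-1}$ is type-definable); you never exhibit $\Oo_K$ itself as type-definable, so the compactness argument for definability does not get off the ground. Indeed the conclusion is false: in a monster model of $\mathrm{Th}(\Qq_p)$ with $K$ a small elementary submodel, $\Oo_K = \bigcup_{n \in \Nn} p^{-n}\Oo$ (where $\Oo$ is the definable valuation ring of $\Kk$) is a strictly increasing union of definable sets, hence $\vee$-definable but not definable. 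Definability of the V-topology is a genuinely harder statement --- compare Theorem~\ref{the-dream}, which is only asserted (as forthcoming work) for rank $2$.

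This matters because your plan then founders exactly where you say it does. The fix is not to vary the heavy parameter $\vec b$ over all of $Y_{\varphi_0}(K)$, but to replace $\Oo_K$ by a \emph{definable approximation}: since $I_K$ is type-definable and $\Oo_K$ is $\vee$-definable with $I_K \subseteq \Oo_K$, compactness yields a $K$-definable set $B$ with $I_K \subseteq B \subseteq \Oo_K$, and one may shrink $B$ to be of the form $H - H$ for a $K$-definable heavy $H$. The single one-parameter scaling family $\{a \cdot B\}_{a \in K^\times}$ is then already cofinal: for any $K$-definable neighborhood $B'$ of $0$ and any nonzero $K$-infinitesimal $a$ one has
\begin{equation*}
  a \cdot B \subseteq a \cdot \Oo_K \subseteq I_K \cdot \Oo_K \subseteq I_K \subseteq B',
\end{equation*}
and $K \preceq \Kk$ transfers this to some nonzero $a' \in K$ with $a' \cdot B \subseteq B'$. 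The reason $K^\times$-scalings of $B$ are not trivial, unlike scalings of $\Oo_K$ itself, is precisely that $B$ is a definable set squeezed strictly between $I_K$ and $\Oo_K$ and need not be closed under multiplication by $K^\times \subseteq \Oo_K^\times$. Your final packaging step --- absorbing the sets $a \cdot H$ into a $0$-definable family of heavy sets using the $0$-definability of heaviness --- is correct and is how the paper concludes.
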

\begin{proof}
  Let $\Oo_K$ be the valuation ring whose maximal ideal is $I_K$.
  Then $\Oo_K$ is $\vee$-definable over $K$ because
  \begin{equation*}
    \forall x \in \Kk^\times : x \in \Oo_K \iff x^{-1} \notin I_K.
  \end{equation*}
  By compactness, we can choose $K$-definable $B$ such that
  \begin{equation*}
    I_K \subseteq B \subseteq \Oo_K.
  \end{equation*}
  Shrinking $B$, we may assume $B = H - H$ for some $K$-definable
  heavy set $H$.  We claim that the family of sets $\{a \cdot
  B(K)\}_{a \in K^\times}$ is a neighborhood basis of 0.  Each set in
  this family is a neighborhood of 0, so it remains to show
  cofinality.  Let $B'$ be some other $K$-definable neighborhood of 0.
  Let $a \in \Kk$ be a nonzero $K$-infinitesimal.  Then
  \begin{equation*}
    a \cdot B \subseteq a \cdot \Oo_K \subseteq I_K \cdot \Oo_K \subseteq I_K \subseteq B'.
  \end{equation*}
  As $K \preceq \Kk$ and the sets $B, B'$ are $K$-definable, there is
  nonzero $a' \in K$ such that
  \begin{equation*}
    a' \cdot B \subseteq B'
  \end{equation*}
  and therefore
  \begin{equation*}
    (a' \cdot B)(K) = a' \cdot B(K) \subseteq B'(K).
  \end{equation*}
  This shows that the family of $\{a \cdot B(K)\}$ is a neighborhood
  basis.  This family can also be written as
  \begin{equation*}
    \{B_a(K)\}_{a \in K^\times}
  \end{equation*}
  where $B_a = (a \cdot H) - (a \cdot H)$.  As heaviness is
  0-definable, we can find a 0-definable family $\{H_b\}_{b \in Y}$ of
  heavy sets containing the sets $a \cdot H$ for every $a \in
  \Kk^\times$.  The family of sets $\{(H_b - H_b)(K)\}_{b \in Y(K)}$
  continues to be cofinal among neighborhoods of 0.
\end{proof}
\begin{definition}
  A dp-finite field $K$ has \emph{valuation type} if $K$ is unstable
  and the canonical topology is a V-topology.
\end{definition}
Unstable fields of dp-rank 1 have valuation type by
\cite{arxiv-myself}, Theorem~4.16.
\begin{proposition}\label{prop:val-type-equiv}
  If $K \equiv K'$ and $K$ is valuation type, then $K'$ is valuation
  type.
\end{proposition}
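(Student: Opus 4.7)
The plan is to express ``valuation type'' by a scheme of first-order axioms in $T := \mathrm{Th}(K) = \mathrm{Th}(K')$, so that the property transfers automatically from $K$ to $K'$. Instability is manifestly elementary, so it suffices to axiomatize the V-topology condition on the canonical topology.

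First, I would apply Lemma~\ref{lem:v-top-uniformity} to $K$ to fix a $\emptyset$-definable family $\{H_b\}_{b \in Y}$ of heavy sets such that $\{(H_b - H_b)(K) : b \in Y(K),\ H_b \text{ heavy}\}$ is a neighborhood basis of $0$ in the canonical topology on $K$. Since the family is defined without parameters, the same formulas determine a corresponding family in $K'$. By Theorem~\ref{new-mult-1}.\ref{nm22}, heaviness is definable in families, so there is a formula $\theta(b)$ expressing ``$H_b$ is heavy'' and, for each formula $\phi(x;\bar{y})$, a formula $\theta_\phi(\bar{c})$ expressing ``$\phi(\cdot;\bar{c})$ is heavy.''

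Next, I would write down two axiom schemes and verify that they hold in $K$. The \emph{refinement} scheme asserts, for each formula $\phi(x;\bar{y})$,
\begin{equation*}
\forall \bar{c}\, \Bigl( \theta_\phi(\bar{c}) \to \exists b\, \bigl( \theta(b) \wedge \forall z\, (z \in H_b - H_b \to z \in \phi(\cdot;\bar{c}) - \phi(\cdot;\bar{c})) \bigr) \Bigr),
\end{equation*}
and the \emph{V-topology} scheme asserts
\begin{equation*}
\forall b\, \Bigl( \theta(b) \to \exists b'\, \bigl( \theta(b') \wedge \forall x,y\, (x \notin H_b - H_b \wedge y \notin H_b - H_b \to xy \notin H_{b'} - H_{b'}) \bigr) \Bigr).
\end{equation*}
The refinement scheme holds in $K$ by the basis conclusion of Lemma~\ref{lem:v-top-uniformity}, and the V-topology scheme holds because the canonical topology on $K$ is a V-topology and $\{(H_b - H_b)(K)\}$ is a neighborhood basis of $0$.

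Finally, since $K \equiv K'$, both schemes hold in $K'$. The refinement scheme implies that every basic neighborhood $X - X$ of $0$ in the canonical topology on $K'$ contains some $H_b - H_b$ with $b \in Y(K')$ and $H_b$ heavy, so the subfamily $\{(H_b - H_b)(K') : b \in Y(K'),\ H_b \text{ heavy}\}$ is itself a neighborhood basis of $0$. The V-topology scheme then furnishes the V-topology axiom relative to this basis, and hence for the whole canonical topology on $K'$. So $K'$ has valuation type. The only delicate point is checking that all the conditions in sight are genuinely first-order, which is precisely what the definability of heaviness in families provides.
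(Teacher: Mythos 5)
Your proof is correct and follows essentially the same route as the paper's: invoke Lemma~\ref{lem:v-top-uniformity} for a $0$-definable family of heavy sets and use definability of heaviness to render the V-topology condition as a scheme of first-order sentences that transfers along $\equiv$. The only cosmetic difference is that you split the transfer into two schemes (cofinality of the family plus the V-topology axiom over that family), whereas the paper folds both into a single scheme quantifying $U$ over all definable heavy sets $H'$ and $V$ over the $0$-definable family.
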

\begin{proof}
  Choose a 0-definable family $\{H_a\}_{a \in Y}$ of heavy sets such
  that in $K$, the family $\{(H_a - H_a)(K)\}_{a \in Y(K)}$ forms a
  neighborhood basis of 0.  By definition of V-topology, for any
  $K$-definable heavy set $H'$ there is $a \in Y(K)$ such that
  \begin{equation*}
    \forall x, y : (x \notin H' - H' \textrm{ and } y \notin H' - H')
    \implies x \cdot y \notin H_a - H_a.
  \end{equation*}
  This property of $K$ is expressed by a conjunction of first-order
  sentences, so it must hold in $K'$.  Therefore the canonical
  topology on $K'$ is also a V-topology.
\end{proof}

\subsection{From multi-valuation rings to valuation rings}\label{sec:toocool}
By Proposition~\ref{prop:prop}, the set of infinitesimals $I_K$ has
some unusual algebraic properties, which do \emph{not} hold for most
multi-valuation ideals.  Consequently, if $I_K$ is a multi-valuation
ideal, something special must be going on.

\begin{lemma}\label{5-way}
  Let $\Gamma$ be an ordered abelian group and let $\Xi$ be a cut in
  $\Gamma$.  Then at least one of the following holds:
  \begin{enumerate}
  \item \label{c1} There is an element $\gamma < 0$ such that for any
    $x \in \Gamma$,
    \begin{equation*}
      x > \Xi \implies x + \gamma > \Xi
    \end{equation*}
  \item \label{c2} There is an element $\gamma$ such that $\gamma <
    \Xi < 2 \gamma$
  \item \label{c3} There is an element $\gamma$ such that $2 \gamma <
    \Xi < \gamma$.
  \item \label{c4} $\Xi$ is the cut $0^+$
  \item \label{c5} $\Xi$ is the cut $0^-$.
  \end{enumerate}
\end{lemma}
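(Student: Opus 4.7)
The plan is to case-split on where $\Xi$ sits relative to $0$. If $\Xi = 0^+$ or $\Xi = 0^-$ we are immediately in case~(\ref{c4}) or case~(\ref{c5}), so we may assume $\Xi > 0^+$ (i.e., $0 \in L$ and some positive element lies in $L$) or $\Xi < 0^-$ (dually).

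Consider first $\Xi > 0^+$ and set $L_+ = L \cap \Gamma^{>0}$, which is nonempty by assumption. The key dichotomy is whether \emph{doubling} ever carries an element of $L_+$ above $\Xi$. If some $\delta \in L_+$ satisfies $2\delta > \Xi$, then $\delta$ directly witnesses case~(\ref{c2}). Otherwise every $\delta \in L_+$ has $2\delta \in L_+$, so $L_+$ is closed under doubling; by iteration it is closed under multiplication by every positive integer, and from $\delta + \delta' \le 2\max(\delta,\delta')$ together with downward-closure of $L$, it is closed under addition. I would then verify case~(\ref{c1}) with $\gamma = -\delta$ for any fixed $\delta \in L_+$: given $r > \Xi$, the requirement $r - \delta > \Xi$ reduces to checking that $a + \delta \in L$ for every $a \in L$. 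This splits cleanly: if $a \le 0$ then $a + \delta \le \delta \in L$ so downward-closure applies, and if $a > 0$ then $a + \delta \in L_+$ by additive closure.

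The case $\Xi < 0^-$ is dual. I would set $R_- = R \cap \Gamma^{<0}$, nonempty by assumption, and run the same dichotomy on doubling: either some $\gamma \in R_-$ has $2\gamma < \Xi$, giving case~(\ref{c3}), or $R_-$ is closed under doubling and hence under addition by the same argument. In the latter subcase, case~(\ref{c1}) holds with any $\gamma_0 \in R_-$: given $r \in R$, split on the sign of $r$. If $r < 0$ then $r \in R_-$ and $r + \gamma_0 \in R_- \subseteq R$ by additive closure; if $r \ge 0$ then $r + \gamma_0 \ge \gamma_0 \in R$, and upward-closure of $R$ finishes.

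The only creative step is spotting the correct dichotomy---whether doubling crosses the cut---after which everything reduces to mechanical bookkeeping with closure properties; I do not anticipate a genuine obstacle. The one mild subtlety worth flagging is the apparent asymmetry of case~(\ref{c1}) (stated only in terms of $R$ shifted by a negative element), which nonetheless handles both signs of $\Xi$ once the dual arguments above are in place.
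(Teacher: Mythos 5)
Your proof is correct. It is, in essence, the contrapositive of the paper's argument: both proofs split on the sign of $\Xi$ (after disposing of $0^\pm$) and both pivot on whether doubling carries an element across the cut, but the implications run in opposite directions. The paper assumes case~(\ref{c1}) fails and extracts a witness for case~(\ref{c2}): from the failure of~(\ref{c1}) with $\gamma = -\gamma_0$ one gets $y < \Xi < y + \gamma_0$, and then $\max(y,\gamma_0)$ satisfies $\max(y,\gamma_0) < \Xi < 2\max(y,\gamma_0)$. You instead assume case~(\ref{c2}) fails (no $\delta \in L_+$ has $2\delta > \Xi$) and verify case~(\ref{c1}) directly, which requires the extra observation that closure of $L_+$ under doubling forces closure under addition via $\delta + \delta' \le 2\max(\delta,\delta')$ and downward-closure of $L$. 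Both routes are sound; the paper's witness extraction is slightly shorter, while your version is constructive in the sense that it exhibits the closure properties of the cut explicitly. All the bookkeeping in your write-up (exhaustiveness of the four initial cases, the reduction of $r - \delta > \Xi$ to $a \in L \Rightarrow a + \delta \in L$, and the dual argument for $R_-$) checks out.
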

\begin{proof}
  Assume none of \ref{c1},\ref{c4},\ref{c5} hold.  We break into two
  cases:
  \begin{itemize}
  \item $\Xi > 0$.  As $\Xi \ne 0^+$, there is some $0 < \gamma_0 <
    \Xi$.  As \ref{c1} fails to hold, there is some $x \in \Gamma$
    such that
    \begin{equation*}
      x > \Xi \text{ and } x - \gamma_0 < \Xi.
    \end{equation*}
    Changing variables, there is some $y \in \Gamma$ such that
    \begin{equation*}
      y < \Xi \text{ and } y + \gamma_0 > \Xi.
    \end{equation*}
    Then
    \begin{equation*}
      \max(y,\gamma_0) < \Xi < y + \gamma_0 \le 2 \max(y,\gamma_0),
    \end{equation*}
    so we may take $\gamma = \max(y,\gamma_0)$.
  \item $\Xi < 0$.  Similar. \qedhere
  \end{itemize} 
\end{proof}

\begin{proposition}\label{hostile-1}
  Let $R_0$ be a multi-valuation ring on a field $K$.  Let $J$ be an
  $R_0$-submodule of $K$, such that
  \begin{enumerate}
  \item $J \cdot J = J$
  \item $1 + J$ is a subgroup of $K^\times$ (in particular, $-1 \notin
    J$).
  \end{enumerate}
  Then $J$ is the Jacobson radical of a multi-valuation ring on $K$.
  Moreover, the multi-valuation ring is
  \begin{equation*}
    \{x \in K : xJ \subseteq J\}.
  \end{equation*}
\end{proposition}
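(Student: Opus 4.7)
The plan is to take $R := \{x \in K : xJ \subseteq J\}$ as the candidate multi-valuation ring (as the ``moreover'' clause suggests) and prove both that it is a multi-valuation ring and that $J$ equals its Jacobson radical. Since $J$ is an $R_0$-submodule, $R_0 \subseteq R$, so Proposition~\ref{superring} yields that $R$ is a multi-valuation ring, and $J \subseteq R$ follows from $J \cdot J \subseteq J$. Using Corollary~\ref{decomposition}, decompose $R = \Oo_1' \cap \cdots \cap \Oo_m'$ into pairwise incomparable valuation rings, with associated valuations $\val_i'$, value groups $\Gamma_i'$, and maximal ideals $\mm_i'$; the Jacobson radical of $R$ is then $\bigcap_i \mm_i'$. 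By Proposition~\ref{multi-valuation-notes}.\ref{cut-description}, one may write $J = \{x \in K : \val_i'(x) > \Xi_i' \text{ for all } i\}$ for some cuts $\Xi_i'$ in $\Gamma_i'$; take each $\Xi_i'$ to be the infimum of $\val_i'(J)$ viewed as a cut. The problem reduces to showing each $\Xi_i' = 0^+$.

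Two inequalities on the cuts fall out of the hypotheses. For $J \subseteq \bigcap_i \mm_i'$: given $x \in J$ and $y \in R$, $xy \in J$, so $1 - xy \in 1 + J \subseteq K^\times$; because $1 + J$ is a multiplicative group, $(1 - xy)^{-1} = 1 + u$ for some $u \in J$, and $(1 + u)J \subseteq J + J \cdot J = J$ shows $(1 - xy)^{-1} \in R$. Thus $1 - xy$ is a unit of $R$ for every $y \in R$, placing $x$ in the Jacobson radical, whence $\Xi_i' \ge 0^+$. From $J \cdot J = J$: any $a \in J$ writes as $\sum_j b_j c_j$ with $b_j, c_j \in J$, so $\val_i'(a) \ge \val_i'(b_{j_0}) + \val_i'(c_{j_0}) > 2\Xi_i'$ for some $j_0$. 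Since this holds for every $a \in J$ and $\Xi_i' = \inf \val_i'(J)$, the cut inequality $\Xi_i' \ge 2\Xi_i'$ follows.

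To finish, I apply Lemma~\ref{5-way} to each $\Xi_i'$ under the constraints $\Xi_i' \ge 0^+$ and $\Xi_i' \ge 2\Xi_i'$. Cases~3 and~5 are excluded because they force $\Xi_i' < 0^+$. Case~2 (with $\gamma < \Xi_i' < 2\gamma$ and $\gamma > 0$) yields $2\Xi_i' > 2\gamma > \Xi_i'$, contradicting $\Xi_i' \ge 2\Xi_i'$. Case~1 forces the upper set of $\Xi_i'$ to be $\langle\gamma\rangle$-invariant while lying in $\Gamma_{i,>0}$; iterating the cut inequality through successive archimedean classes empties the upper set, giving $J = \{0\}$ and $R = K$, where the conclusion is trivial. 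Only Case~4 ($\Xi_i' = 0^+$) survives, so $J = \bigcap_i \mm_i'$ is the Jacobson radical of $R$. The main technical obstacle is the clean dispatch of Case~1 in non-archimedean value groups, where the iteration may need to traverse several archimedean classes before the upper set is exhausted.
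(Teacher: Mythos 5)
Your overall architecture is sound, and your derivation of $J\subseteq\operatorname{Jac}(R)$ via the unit criterion is a genuinely nice rearrangement: showing directly that $1-xy$ is a unit of $R$ for $x\in J$, $y\in R$ (using that $(1-xy)^{-1}\in 1+J\subseteq R$) gives $\Xi_i'\ge 0^+$ up front and thereby absorbs the paper's final Chinese-remainder step, which is only needed there to exclude the possibility $Q_i=\Oo_i$ (the cut $0^-$). Cases 2, 3 and 5 of Lemma~\ref{5-way} are dispatched correctly.

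The gap is Case 1, and your own closing sentence correctly identifies it as the danger point --- but the proposed ``iteration through archimedean classes'' does not work. Take $\Gamma_i'=\Qq\times\Zz$ ordered lexicographically and let $\Xi$ be the cut with upper set $U=\{(a,b):a>0\}$. Then $U+(0,-1)\subseteq U$ (Case 1 holds with $\gamma=(0,-1)<0$), $U\subseteq\Gamma_{>0}$, and $U=U+U$, so $U$ is stable under both of your cut inequalities; iterating $\Xi\ge 2\Xi$ never empties $U$, because elements of $U$ are sums of $2^n$ elements of $U$ for every $n$. So Cases 1 and 4 are genuinely mutually exclusive, Case 1 is compatible with everything you have established, and your argument cannot conclude. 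What kills Case 1 is a fact you never prove: the multiplicative stabilizer $\Oo_i'':=\{x:xQ_i\subseteq Q_i\}$ of each $Q_i=\{x:\val_i'(x)>\Xi_i'\}$ equals $\Oo_i'$ exactly, not some proper coarsening. The paper gets this by observing $\bigcap_i\Oo_i''\subseteq\{x:xJ\subseteq J\}=R=\bigcap_i\Oo_i'\subseteq\bigcap_i\Oo_i''$, so the two intersections coincide, and pairwise incomparability together with Corollary~\ref{finite-chains} forces $\Oo_i''=\Oo_i'$ for each $i$. With that in hand, Case 1 dies instantly: an element $a$ with $\val_i'(a)=\gamma<0$ would satisfy $aQ_i\subseteq Q_i$, hence $a\in\Oo_i''=\Oo_i'$, contradicting $\val_i'(a)<0$. (In the counterexample above the stabilizer of $Q_i$ is the coarsening corresponding to the convex subgroup $\{0\}\times\Zz$, which is strictly larger than $\Oo_i'$ --- exactly the configuration the stabilizer computation rules out.) You should insert this stabilizer identification before the case analysis; the rest of your proof then goes through.
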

\begin{proof}
  Let $R = \{x \in K : xJ \subseteq J\}$.  Then $R$ is a superring of
  $R_0$, so $R$ is a multi-valuation ring on $K$ by
  Proposition~\ref{superring}.  Write $R$ as an intersection of
  pairwise incomparable valuation rings:
  \begin{equation*}
    R = \Oo_1 \cap \cdots \cap \Oo_n
  \end{equation*}
  Let $(\val_i,\Gamma_i,\res_i,\mm_i,k_i)$ be the usual valuation data
  associated to $\Oo_i$.

  Let $\Xi_i$ be the largest cut in $\Gamma_i$ such that
  \begin{equation*}
    \forall x \in I_K : \Xi_i < \val_i(x)
  \end{equation*}
  As in the proof of
  Proposition~\ref{multi-valuation-notes}.\ref{cut-description}, we have
  \begin{equation*}
    x \in I_K \iff \forall i : \val_i(x) > \Xi_i.
  \end{equation*}
  Let $Q_i = \{x \in K : \val_i(x) > \Xi_i\}$.  Then $Q_i$ is an
  $\Oo_i$-submodule of $K$, and $J = \bigcap_{i = 1}^n Q_i$.  Let
  \begin{equation*}
    \Oo'_i = \{x \in K : xQ_i \subseteq Q_i\}.
  \end{equation*}
  Then $\Oo'_i$ is a superring of $\Oo_i$, hence a valuation ring.
  Let $R' = \bigcap_{i = 1}^n \Oo'_i$.  Note that if $x \in R'$ and $y
  \in J$, then
  \begin{equation*}
    x \in \bigcap_{i = 1}^n \Oo'_i \text{ and } y \in \bigcap_{i = 1}^n Q_i,
  \end{equation*}
  implying $xy \in \bigcap_{i = 1}^n Q_i = J$.  Thus
  \begin{equation*}
    R' \subseteq \{x \in K : xJ \subseteq J\} = R = \bigcap_{i = 1}^n
    \Oo_i \subseteq \bigcap_{i = 1}^n \Oo'_i = R'.
  \end{equation*}
  Thus $R = R'$.  By Corollary~\ref{finite-chains}, we see that for
  every $i$ there is $j$ such that
  \begin{equation*}
    \Oo_i \supseteq \Oo'_j \supseteq \Oo_j.
  \end{equation*}
  As the $\Oo_i$ were pairwise incomparable, $j$ must be $i$ and we
  conclude that $\Oo'_i = \Oo_i$ for all $i$.

  \begin{claim}
    Each $\Xi_i$ is the cut $0^+$ or $0^-$.
  \end{claim}
  \begin{claimproof}
    Use Lemma~\ref{5-way}.  We need to rule out cases
    \ref{c1}-\ref{c3}.
    \begin{enumerate}
    \item Suppose there is an element $\gamma < 0$ in $\Gamma_i$ such
      that for any $x \in \Gamma_i$,
      \begin{equation*}
        x > \Xi_i \implies x + \gamma > \Xi_i.
      \end{equation*}
      Take $a \in K$ with $\val_i(a) = \gamma$.  Then $aQ_i \subseteq
      Q_i$, so $a \in \Oo'_i = \Oo_i$, contradicting the fact that
      $\val_i(a) = \gamma < 0$.
    \item Suppose there is an element $\gamma \in \Gamma_i$ such that
      $\gamma < \Xi_i < 2 \gamma$.  By choice of $\Xi_i$, there is $a
      \in J$ such that $\val_i(a) \le 2 \gamma$.  As $J = J \cdot J$,
      we can write
      \begin{equation*}
        a = x_1 x_2 + \cdots + x_{2m-1}x_{2m}
      \end{equation*}
      for some $x_1,\ldots,x_{2m} \in J$.  But $\val_i(x_j) > \Xi_i >
      \gamma$, so $\val_i(x_{2j-1}x_{2j}) > 2 \gamma$.  Then $\val(a)
      > 2 \gamma$, contradicting the choice of $a$.
    \item Suppose there is an element $\gamma \in \Gamma_i$ such that
      $2 \gamma < \Xi_i < \gamma$.  By choice of $\Xi_i$, there is $a
      \in J$ such that $\val_i(a) \le \gamma$.  Then $a^2 \in J \cdot
      J = J$, but $\val_i(a^2) \le 2 \gamma < \Xi_i$, implying $a^2
      \notin J$.
    \end{enumerate}
    By Lemma~\ref{5-way}, the only other possibility is that $\Xi_i$
    is $0^\pm$.
  \end{claimproof}
  The claim means that each $Q_i = \{x \in K : \val_i(x) > \Xi_i\}$ is
  either the valuation ring $\Oo_i$ or the maximal ideal $\mm_i$.
  Without loss of generality,
  \begin{equation*}
    J = \Oo_1 \cap \cdots \cap \Oo_m \cap \mm_{m+1} \cap \cdots \cap
    \mm_n.
  \end{equation*}
  We claim that $m = 0$.  Otherwise, there is $a \in \bigcap_{i = 1}^n
  \Oo_i$ such that
  \begin{align*}
    a &\equiv -1 \pmod{\mm_1} \\
    a &\equiv 0 \pmod{\mm_j} \qquad (j > 1),
  \end{align*}
  by the Chinese remainder theorem (Proposition~\ref{prop:crt}).  Then
  $a \in J$, $1 + a \in 1 + J$, and so \[1/(1+a) \in 1 + J \subseteq
  \bigcap_{i = 1}^n \Oo_i.\] But $\res_1(1+a) = 0$, so $1/(1+a) \notin
  \Oo_1$, a contradiction.  Therefore $m = 0$ and
  \begin{equation*}
    J = \mm_1 \cap \cdots \cap \mm_n.
  \end{equation*}
  In other words $J$ is the Jacobson radical $R$.
\end{proof}

\begin{proposition}\label{prop:finally-a-valuation}
  Let $\Kk$ be a monster dp-finite unstable field and $K \preceq \Kk$
  be a small submodel.  Suppose $I_K$ is an $R$-submodule of $\Kk$ for
  some multi-valuation ring $R$ on $\Kk$.  Then $I_K$ is the maximal
  ideal of a valuation ring, so $K$ and $\Kk$ have valuation type.
\end{proposition}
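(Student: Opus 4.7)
The plan is to apply Proposition~\ref{hostile-1} to realize $I_K$ as the Jacobson radical of an explicit multi-valuation ring $R'$, and then show that $R'$ has a unique maximal ideal. First I would verify the hypotheses of Proposition~\ref{hostile-1} with $R_0 = R$ and $J = I_K$: the $R$-submodule condition is the assumption, the identity $I_K \cdot I_K = I_K$ is Proposition~\ref{prop:prop}.2, and $1 + I_K \le \Kk^\times$ is Proposition~\ref{prop:prop}.3. This yields that $I_K$ is the Jacobson radical of the multi-valuation ring $R' := \{x \in \Kk : x I_K \subseteq I_K\}$. Decomposing $R' = \Oo_1 \cap \cdots \cap \Oo_n$ via Corollary~\ref{decomposition}, with maximal ideals $\mm_i$ and residue fields $k_i$, the statement reduces to proving $n = 1$.

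To rule out $n \ge 2$, I would assume this for contradiction and use the Chinese remainder result (Proposition~\ref{prop:crt}) to pick $a \in R'$ whose image in $R'/I_K \cong \prod_i k_i$ is $(0, 1, \ldots, 1)$. Then $a \notin I_K$ and $1 - a \notin I_K$, but $a(1-a) \in I_K$ since its image is $0$. The contradiction will come from the extra algebraic structure on $I_K$ afforded by Proposition~\ref{prop:prop}, and I would split on the characteristic of $\Kk$.

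In positive characteristic $p$, every coordinate of the image of $a$ satisfies $x^p = x$ in $\mathbb{F}_p$, so $a^p - a \in I_K$. The Artin-Schreier surjectivity (Proposition~\ref{prop:prop}.5) then produces $x \in I_K$ with $x^p - x = a^p - a$; setting $c := a - x$ yields $c^p = c$, hence $c \in \mathbb{F}_p$. But $c$ and $a$ have the same image in $\prod_i k_i$, namely $(0, 1, \ldots, 1)$, while any element of $\mathbb{F}_p$ maps diagonally into this product. This is the contradiction.

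In characteristic $0$ my replacement for Artin-Schreier is multiplicative divisibility. Since $\tfrac12 \in K$ we have $I_K = 2 \cdot I_K$. The element $(2a - 1)^2 = 1 + 4(a^2 - a)$ lies in $1 + I_K$ because $a(a-1) \in I_K$ and $4 \in K$. Surjectivity of squaring on $1 + I_K$ (Proposition~\ref{prop:prop}.4) provides $c \in 1 + I_K$ with $c^2 = (2a-1)^2$, which forces $2a - 1 = \pm c$ in the field $\Kk$. Writing $c = 1 + \epsilon$ with $\epsilon \in I_K$ and using $\epsilon/2 \in I_K$, the sign $+$ gives $a - 1 \in I_K$ and the sign $-$ gives $a \in I_K$, both contradicting the choice of $a$. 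The main obstacle is engineering this characteristic-zero step without Artin-Schreier; the key insight is that the $K$-module structure on $I_K$ converts the $\ell$-divisibility of $1 + I_K$ into the ability to ``halve'' elements of $I_K$.
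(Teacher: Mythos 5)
Your proof is correct and follows essentially the same route as the paper's: reduce via Proposition~\ref{hostile-1} to $I_K = \mm_1 \cap \cdots \cap \mm_n$ with $R$ replaced by the stabilizer ring, then combine the Chinese remainder theorem (Proposition~\ref{prop:crt}) with surjectivity of squaring on $1+I_K$, respectively of the Artin--Schreier map on $I_K$, to rule out $n \ge 2$. The only difference is cosmetic: the paper splits into characteristic $\ne 2$ (squaring, applied to a CRT element with residues $(-1,1,\ldots,1)$) versus characteristic $2$ (Artin--Schreier), whereas you use Artin--Schreier for every $p>0$ and reserve the squaring argument, applied to $2a-1$, for characteristic $0$, where your division by $2$ is justified by $K \cdot I_K \subseteq I_K$ --- the same fact the paper invokes when checking that the $\Oo_i$ are equicharacteristic.
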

\begin{proof}
  By Proposition~\ref{prop:prop}, $I_K$ satisfies the assumptions of
  Proposition~\ref{hostile-1}.  Therefore, we may change $R$ and
  assume that $I_K$ is the Jacobson radical of $R$:
  \begin{align*}
    I_K &= \mm_1 \cap \cdots \cap \mm_n \\
    R &= \Oo_1 \cap \cdots \cap \Oo_n,
  \end{align*}
  where the $\Oo_i$ are pairwise incomparable valuation rings and the
  $\mm_i$ are their maximal ideals.  Suppose for the sake of
  contradiction that $n > 1$.

  We first check that none of the $\Oo_i$ has mixed characteristic.
  Indeed, Proposition~\ref{hostile-1} ensures
  \begin{equation*}
    R = \{x \in \Kk : xI_K \subseteq I_K\}.
  \end{equation*}
  By Remark~6.9.3 in \cite{prdf}, $K \cdot I_K \subseteq I_K$.
  Therefore $K \subseteq R \subseteq \Oo_i$ for each $i$, ensuring
  that $\Oo_i$ is an equicharacteristic valuation ring.

  By Proposition~\ref{prop:prop}, we know the following:
  \begin{itemize}
  \item The squaring map on $1 + I_K$ is surjective.
  \item If $K$ has characteristic $p > 0$, the Artin-Schreier map on
    $1 + I_K$ is surjective.
  \end{itemize}
  We break into cases according to $\characteristic(\Kk)$.  First
  suppose $\characteristic(\Kk) \ne 2$.  By the Chinese remainder
  theorem (Proposition~\ref{prop:crt}), there is $a \in R$ such that
  \begin{align*}
    a &\equiv -1 \pmod{\mm_1} \\
    a &\equiv 1 \pmod{\mm_i} \qquad (i > 1),
  \end{align*}
  Then $\pm a \notin 1 + I_K$, but $a^2 \in 1 + I_K$, contradicting the
  surjectivity of the squaring map on $1 + I_K$.

  Next suppose $\characteristic(\Kk) = 2$.  By the Chinese remainder
  theorem, there is $a \in R$ such that
  \begin{align*}
    a &\equiv 1 \pmod{\mm_1} \\
    a &\equiv 0 \pmod{\mm_i} \qquad (i > 1),
  \end{align*}
  Then $a, 1 + a \notin I_K$, but $a^2 - a \in I_K$, contradicting the
  surjectivity of the Artin-Schreier homomorphism on $I_K$.

  In either case, we get a contradiction unless $n = 1$.  Thus $R =
  \Oo_1$ is a valuation ring and $I_K = \mm_1$ is its maximal ideal.
  Now $K$ has valuation type by the discussion at the start of
  \S\ref{sec:val-type}, and $\Kk$ has valuation type by
  Proposition~\ref{prop:val-type-equiv}.
\end{proof}

\section{Bounded groups}\label{sec:bdd}
In this section, we assume $\Kk$ is an unstable dp-finite field.
\begin{remark}\label{rem:heav}
  Let $n = \dpr(\Kk)$.  The following are equivalent for a
  type-definable subgroup $G \le (\Kk,+)$:
  \begin{enumerate}
  \item \label{ael1} $\dpr(G) = n$.
  \item \label{ael2} Every definable set $D \supseteq G$ has rank $n$.
  \item \label{ael3} Every definable set $D \supseteq G$ is heavy.
  \item \label{ael4} $G$ contains $I_K$ for some small subfield $K
    \preceq \Kk$.
  \end{enumerate}
\end{remark}
\begin{proof}
  The implication (\ref{ael1})$\implies$(\ref{ael2}) is trivial.  The
  equivalence (\ref{ael2})$\iff$(\ref{ael3}) is
  Theorem~\ref{new-mult-1}.\ref{nm22}.  The implication
  (\ref{ael3})$\implies$(\ref{ael4}) is Corollary~6.19 in \cite{prdf}.
  The implication (\ref{ael4})$\implies$(\ref{ael1}) is
  Lemma~\ref{lemma-surprise}.
\end{proof}

\begin{definition}
  Let $G \le (\Kk,+)$ be type-definable.
  \begin{itemize}
  \item $G$ is \emph{heavy} if it satisfies the equivalent conditions
    of Remark~\ref{rem:heav}.
  \item $G$ is \emph{bounded} if for every heavy subgroup $G' \le K$,
    there is non-zero $a \in \Kk$ such that $G \le a \cdot G'$.
  \end{itemize}
\end{definition}
Subgroups of bounded groups are bounded.  If $G$ is bounded and $a \in
\Kk$, then $a \cdot G$ is bounded.

\begin{definition}\label{def:magic}
  Let $\Kk$ be a dp-finite field.  A small submodel $K_0 \preceq \Kk$
  is \emph{magic} if for every type-definable subgroup $G \le
  (\Kk^n,+)$, we have
  \begin{equation*}
    K_0 \cdot G \subseteq G \implies G = G^{00}.
  \end{equation*}
  In other words, type-definable $K_0$-linear subspaces of $\Kk^n$ are
  00-connected.
\end{definition}
By \cite{prdf} (Theorem~8.4 and the proof of Corollary~8.7), all
sufficiently large submodels of $\Kk$ are magic.  In particular, magic
subfields exist.

Recall the notion of strict $r$-cubes and reduced rank (Definitions
9.13, 9.17 in \cite{prdf}).  A strict $r$-cube in a modular lattice
$M$ is an injective homomorphism of (unbounded) lattices from the
powerset of $r$ to $M$.  The \emph{base} of the cube is the image of
$\emptyset$ under this homomorphism.  The reduced rank $\redrk(M)$ is
the maximum $r$ such that a strict $r$-cube exists.  If $a \ge b$ are
elements of $M$, then $\redrk(a/b)$ is the reduced rank of the
sublattice $[b,a] \subseteq M$.

Fix a magic subfield $K_0$, and let $\Lambda = \Lambda_{K_0}$ be the
lattice of type-definable $K_0$-linear subspaces of $\Kk^1$.  Let $r =
\redrk(\Lambda)$ be the reduced rank of $\Lambda$.  The rank $r$ is
finite by \cite{prdf}, Proposition~10.1.7.
\begin{definition}\label{def:pedestal}
  A \emph{$K_0$-pedestal} is a group $J \in \Lambda$ that is the base
  of a strict $r$-cube in $\Lambda$, where $r = \redrk(\Lambda)$.
\end{definition}
In \S 10.1 of \cite{prdf}, $K_0$-pedestals were called ``special
groups.''

\begin{lemma}\label{lem:super-guards}
  Let $G \in \Lambda$ satisfy $\redrk(\Kk/G) = r$.  Then $G$ is
  bounded.
\end{lemma}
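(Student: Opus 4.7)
The strategy is to first reduce to the case when $G$ is a $K_0$-pedestal, and then prove that $K_0$-pedestals are bounded. The hypothesis $\redrk(\Kk/G)=r=\redrk(\Lambda)$ is precisely what delivers a pedestal above $G$: the interval $[G,\Kk]\subseteq\Lambda$ admits a strict $r$-cube by definition of reduced rank, and its base $J$ gives a $K_0$-pedestal with $G \le J$ (since a strict $r$-cube in $[G,\Kk]$ is automatically a strict $r$-cube in all of $\Lambda$). Because subgroups of bounded groups are bounded, it suffices to show $J$ is bounded.

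Fix an arbitrary heavy type-definable subgroup $G' \le \Kk$; the goal is to produce $a \in \Kk^\times$ with $J \le a \cdot G'$. By Remark~\ref{rem:heav}(\ref{ael4}), there is a small model $K \preceq \Kk$ with $I_K \le G'$, which we may enlarge so that $K \succeq K_0$. Then $I_K$ is $K_0$-linear by \cite{prdf}, Remark~6.9.3, so $I_K \in \Lambda$ is heavy. It suffices to produce $a$ with $J \le a \cdot I_K$, so we may replace $G'$ by an arbitrary heavy $H \in \Lambda$.

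I would then argue by contradiction. Fix a strict $r$-cube $\{J_S\}_{S \subseteq [r]}$ in $\Lambda$ based at $J_\emptyset = J$. If $J \not\le aH$ for every $a \in \Kk^\times$, the plan is to construct a strict $(r+1)$-cube in $\Lambda$ from $\{J_S\}$ by adjoining a new coordinate coming from a generically chosen scaled copy $aH$ of $H$. Multiplication by elements of $\Kk^\times$ acts on $\Lambda$ by lattice automorphisms and preserves reduced rank, so these rescalings interact compatibly with the cube structure. The heaviness of $aH$ (full dp-rank $n$) together with the assumption $J \not\le aH$ should force the new coordinate to be lattice-independent from the original $r$ coordinates, yielding a strict $(r+1)$-cube and contradicting $r=\redrk(\Lambda)$.

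The main obstacle is the final strictness verification: picking a single $a \in \Kk^\times$ for which all $2^{r+1}$ joins and meets of $\{J_S : S \subseteq [r]\}\cup\{aH\}$ are pairwise distinct. I expect this to reduce to a pigeonhole over scalars $a \in \Kk^\times$, combined with definability of heaviness in families (Theorem~\ref{new-mult-1}.\ref{nm22}), showing that only a ``thin'' set of scalars causes degenerate coincidences.
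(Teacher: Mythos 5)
Your opening reduction matches the paper's: from $\redrk(\Kk/G)=r$ you extract a strict $r$-cube in $[G,\Kk]$, its base $J\supseteq G$ is a $K_0$-pedestal, and since subgroups of bounded groups are bounded it suffices to bound $J$; reducing the target from an arbitrary heavy subgroup $G'$ to a heavy $H\in\Lambda$ via $I_K\le G'$ is also fine. But the core of the argument --- actually producing $a$ with $J\le a\cdot H$ --- is where your proposal has a genuine gap. The paper does not argue by contradiction with cubes at this point; it invokes the already-established multiplicative property of pedestals (Proposition~10.4.3 of \cite{prdf}): for $K\succeq K_0$ large enough that $J$ is type-definable over $K$ and $I_K\subseteq H$, one has $I_K\cdot J\subseteq I_K$, whence $\epsilon\cdot G\subseteq\epsilon\cdot J\subseteq I_K\cdot J\subseteq I_K\subseteq H$ for any nonzero $\epsilon\in I_K$, and $a=\epsilon^{-1}$ works. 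That single citation is the entire content of the lemma beyond the reduction.

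Your substitute --- assuming $J\not\le aH$ for every $a\in\Kk^\times$ and trying to adjoin a generically scaled $aH$ as an $(r+1)$-st coordinate of the cube --- is not carried out, and the missing step is exactly the hard part. Strictness of an $(r+1)$-cube is not a condition on $aH$ alone versus $J$; it requires that all the joins and meets of $aH$ with the $2^r$ existing cube elements generate a Boolean sublattice with no collapses, i.e.\ genuine lattice-independence of the new coordinate from all $r$ old ones. Nothing in the hypothesis ``$J\not\le aH$ for all $a$'' forces this: $aH$ could, for instance, sit above some $J_S$ with $S\ne\emptyset$ or interact with the cube in ways that violate the required distributivity identities, and neither heaviness of $aH$ nor a pigeonhole over scalars is shown to exclude such degeneracies. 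You acknowledge this yourself (``the main obstacle is the final strictness verification''), but that verification is the theorem; as written the proposal establishes only the easy reduction and leaves the essential claim unproved. The fix is simply to quote the pedestal property $I_K\cdot J\subseteq I_K$ from \cite{prdf} rather than re-derive it.
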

\begin{proof}
  We may assume $G$ is nonzero.  Let $H$ be any heavy subgroup.
  Choose a strict $r$-cube in the interval $[G,\Kk] \subseteq \Lambda$
  and let $J$ be the base of the cube.  Then $J$ is a $K_0$-pedestal.
  Let $K \preceq \Kk$ be a small submodel, chosen large enough that
  \begin{itemize}
  \item $I_K \subseteq H$ (Remark~\ref{rem:heav}).
  \item $G, J$ are type-definable over $K$.
  \item $K \supseteq K_0$.
  \end{itemize}
  The fact that $J$ is a $K_0$-pedestal implies $I_K \cdot J \subseteq
  I_K$, by \cite{prdf}, Proposition~10.4.3.  Thus
  \begin{equation*}
    I_K \cdot G \subseteq I_K \cdot J \subseteq I_K \subseteq H.
  \end{equation*}
  By Remark~6.9.1 in \cite{prdf}, there is nonzero $\epsilon \in I_K$.
  Then
  \begin{equation*}
    \epsilon \cdot G \subseteq I_K \cdot G \subseteq H,
  \end{equation*}
  verifying that $G$ is bounded.
\end{proof}
\begin{remark}\label{rem:pedestals-enclosed}
  In particular, $K_0$-pedestals are bounded.
\end{remark}

\begin{lemma}\label{wtf}
  Let $\{U_x\}$ be a 0-definable family of basic neighborhoods.  Then
  there is a 0-definable family of basic neighborhoods $\{V_x\}$ with
  the following property:
  \begin{equation*}
    \exists b ~\forall c ~\exists d : V_b \cdot V_d \subseteq U_c.
  \end{equation*}
\end{lemma}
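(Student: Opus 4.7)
The lemma is a uniform version of the continuity of multiplication (Corollary~\ref{field-topology-cor}). Continuity gives, for each individual basic neighborhood $U_c$, the existence of basic neighborhoods $V', W'$ with $V' \cdot W' \subseteq U_c$; we want a single $V_b$ which works (together with some shrinking companion $V_d$) simultaneously for every $c$ in the family. Morally, this is the requirement that $V_b$ be ``bounded'' for multiplication, and the natural source of bounded objects here is the pedestals of Section~\ref{sec:bdd}.

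My plan is to fix a magic submodel $K_0 \preceq \Kk$ (such models exist by the discussion preceding Definition~\ref{def:pedestal}) and a $K_0$-pedestal $J \le (\Kk, +)$. By Remark~\ref{rem:pedestals-enclosed} the pedestal $J$ is bounded; more concretely, by Proposition~10.4.3 of \cite{prdf} (cited in the proof of Lemma~\ref{lem:super-guards}) we have $I_K \cdot J \subseteq I_K$ for every small $K \succeq K_0$. This is the crucial algebraic input: multiplying a $K$-infinitesimal by any element of the pedestal stays within $I_K$.

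From here I would approximate $J$ from above by a $K_0$-definable heavy set $X_b \supseteq J$ (possible because $J$ is type-definable and heaviness is 0-definable in families, Theorem~\ref{new-mult-1}.\ref{nm22}), and take $V_b := X_b \mininf X_b$ as the distinguished member. For each $c$, write $U_c = X_c \mininf X_c$ and pick a small $K_c \succeq K_0$ over which $U_c$ is defined; then $U_c \supseteq I_{K_c}$ and $J \cdot I_{K_c} \subseteq I_{K_c} \subseteq U_c$. A compactness argument applied to this inclusion, combined with the approximation $V_b \supseteq J$, should yield a $K_c$-definable heavy set $X_d \supseteq I_{K_c}$ with $V_b \cdot (X_d \mininf X_d) \subseteq U_c$; set $V_d := X_d \mininf X_d$. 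Finally, I package $V_b$ together with all such $V_d$ into a single 0-definable family $\{V_x\}$, using definability of heaviness in families to guarantee that every index yields a genuine basic neighborhood.

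The main obstacle is the compactness step: one must choose the single definable approximation $V_b$ of the type-definable pedestal $J$ so that the inclusion $V_b \cdot V_d \subseteq U_c$ is still achievable, for every $c$ in the given family, by some definable basic neighborhood $V_d$. The approximation $V_b$ has to be tight enough to inherit boundedness from $J$, yet must be fixed independently of $c$. Handling this uniformity over $c$ while keeping the family $\{V_x\}$ genuinely 0-definable is the technical heart of the argument, and is where the combination of pedestal boundedness, compactness, and 0-definability of heaviness must be deployed carefully.
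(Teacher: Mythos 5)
Your setup---magic subfield, pedestal $J$, the inclusion $I_K \cdot J \subseteq I_K$, compactness, and $0$-definability of heaviness---is exactly the paper's, but the step you yourself flag as ``the main obstacle'' is a genuine gap, and the plan you sketch for it does not close it. You fix $V_b = X_b \mininf X_b$ for a single definable $X_b \supseteq J$ chosen \emph{in advance}, and then for each $c$ apply compactness to $J \cdot I_{K_c} \subseteq U_c$. What compactness actually yields from that inclusion is a definable $D_c \supseteq J$ and a definable $E_c \supseteq I_{K_c}$ with $D_c \cdot E_c \subseteq U_c$, where $D_c$ \emph{depends on $c$}. There is no reason the pre-chosen $V_b$ is contained in $D_c$; in general $V_b$ is strictly larger, and nothing forces $V_b \cdot E_c \subseteq U_c$. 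Boundedness of the type-definable group $J$ does not transfer to any fixed definable neighborhood of $J$, so ``compactness combined with $V_b \supseteq J$'' does not produce the required uniformity in $c$. (A smaller issue: $J$ is only type-definable over some $K_1 \succeq K_0$, not over $K_0$, so your $K_0$-definable approximation is not quite available either.)

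The paper closes the gap with a saturation trick absent from your proposal. Take $K_1 \succeq K_0$ over which $J$ is type-definable, so that $I_{K_1} \subseteq J$ and $I_{K_2} \cdot J \subseteq I_{K_2}$ for $K_2 \succeq K_1$; then choose $K_2 \succeq K_1$ small but $|K_1|^+$-saturated. For each $c \in \dcl(K_2)$, compactness applied to $I_{K_2} \cdot I_{K_1} \subseteq I_{K_2} \subseteq U_c$ gives a $K_1$-definable basic neighborhood $W_1 \supseteq I_{K_1}$ and a $K_2$-definable $W_2 \supseteq I_{K_2}$ with $W_1 \cdot W_2 \subseteq U_c$. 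The key point is that $W_1$ ranges over the at most $|K_1|$-many $K_1$-definable basic neighborhoods, so $|K_1|^+$-saturation of $K_2$ lets one extract a \emph{finite} list $X_1,\ldots,X_n$ of candidates for $W_1$ (and a single definable family for $W_2$) that works for every $c \in \dcl(K_2)$; intersecting the $X_i$ gives one $V_b$, and the statement transfers from $K_2$ to $\Kk$ by elementarity. Without some analogue of this two-model saturation argument, your fixed choice of $V_b$ cannot be justified.
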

\begin{proof}
  Fix a magic subfield $K_0$.  By Proposition~10.4.1 in \cite{prdf},
  there is a nonzero $K_0$-pedestal $J$.  Choose small $K_1 \succeq
  K_0$ such that $J$ is type-definable over $K_1$.  Choose $K_2
  \succeq K_1$ to be small but $|K_1|^+$-saturated.  Then
  \begin{align*}
    I_{K_1} & \subseteq J \\
    I_{K_2} \cdot J & \subseteq I_{K_2}
  \end{align*}
  by \cite{prdf}, Proposition~10.4.3.  Then for any $c \in \dcl(K_2)$,
  \begin{equation*}
    I_{K_2} \cdot I_{K_1} \subseteq I_{K_2} \cdot J \subseteq I_{K_2}
    \subseteq U_c.
  \end{equation*}
  Therefore there exists a $K_1$-definable neighborhood $W_1$ and a
  $K_2$-definable neighborhood $W_2$ such that $W_1 \cdot W_2
  \subseteq U_c$.  Because $K_2$ is $|K_1|^+$-saturated, we can choose
  $W_1$ from a finite family and $W_2$ from a definable family (of
  bounded complexity), independent of $c$.  Thus there exist
  $K_1$-definable basic neighborhoods $X_1, \ldots, X_n$ and a
  0-definable family $\{V_x\}$ of basic neighborhoods such that
  \begin{equation*}
    \forall c \in \dcl(K_2) ~\exists i \in \{1,\ldots,n\} ~\exists d
    \in \dcl(K_2) : X_i \cdot V_d \subseteq U_c.
  \end{equation*}
  Replacing $X_1, \ldots, X_n$ with their intersection, we may assume
  $X = X_i$ independent of $i$, and so
  \begin{equation*}
    \forall c \in \dcl(K_2)~ \exists d \in \dcl(K_2) : X \cdot V_d \subseteq U_c.
  \end{equation*}
  Enlarging $V_d$, we may assume $X = V_b$ for some $b \in \dcl(K_1)
  \subseteq \dcl(K_2)$.  Then
  \begin{equation*}
    \exists b \in \dcl(K_2) ~\forall c \in \dcl(K_2) ~\exists d \in
    \dcl(K_2) : V_b \cdot V_d \subseteq U_c.
  \end{equation*}
  This extends from $K_2$ to its elementary extension $\Kk$.
\end{proof}

\begin{lemma}\label{lem:overpower}
  If $K_1 \preceq K_2 \preceq \Kk$, then $I_{K_1} \cdot I_{K_2}
  \subseteq I_{K_2}$.
\end{lemma}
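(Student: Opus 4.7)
The plan is to deduce this from Lemma~\ref{wtf} by a double application of elementarity. Take $\epsilon_1 \in I_{K_1}$ and $\epsilon_2 \in I_{K_2}$; by Corollary~\ref{alternative-bases} it suffices to show that $\epsilon_1 \epsilon_2$ lies in every $K_2$-definable basic neighborhood of $0$.

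Fix such a neighborhood $U$ and realize it as $U = U_{c_0}$ for some $c_0 \in K_2$ inside a 0-definable family $\{U_x\}$ of basic neighborhoods. Concretely, writing $U = X \mininf X$ for a heavy $K_2$-definable $X = \phi(\Kk;c_0)$, set $U_x = \phi(\Kk;x) \mininf \phi(\Kk;x)$ and restrict $x$ to the locus where $\phi(\Kk;x)$ is heavy, which is 0-definable by Theorem~\ref{new-mult-1}. Applying Lemma~\ref{wtf} to $\{U_x\}$ produces a 0-definable family $\{V_y\}$ of basic neighborhoods satisfying
\[
\Kk \models \exists b\,\forall c\,\exists d : V_b \cdot V_d \subseteq U_c.
\]

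Since this sentence is 0-definable, elementarity $K_1 \preceq \Kk$ supplies $b_0 \in K_1$ witnessing the outer $\exists b$; for this fixed $b_0$ the inner statement $\forall c\,\exists d : V_{b_0} \cdot V_d \subseteq U_c$ is first-order in $b_0$ and therefore holds in all three of $K_1, K_2, \Kk$. Specializing to our $c_0 \in K_2$ and applying $K_2 \preceq \Kk$ to the $\exists d$ yields $d_0 \in K_2$ with $V_{b_0} \cdot V_{d_0} \subseteq U_{c_0} = U$.

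Now $V_{b_0}$ is a $K_1$-definable basic neighborhood, so by the description of $I_{K_1}$ given in Corollary~\ref{alternative-bases} we have $\epsilon_1 \in I_{K_1} \subseteq V_{b_0}$; similarly $V_{d_0}$ is $K_2$-definable, so $\epsilon_2 \in I_{K_2} \subseteq V_{d_0}$. Therefore
\[
\epsilon_1 \epsilon_2 \in V_{b_0} \cdot V_{d_0} \subseteq U,
\]
as required. The genuine content is packaged into Lemma~\ref{wtf}; the only real subtlety is the \emph{asymmetric} use of elementarity, which places $b_0$ inside $K_1$ and $d_0$ inside $K_2$ so that one factor ends up definable over the smaller field and the other over the larger one.
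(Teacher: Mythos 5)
Your proof is correct and follows essentially the same route as the paper's: write $U$ as a member of a $0$-definable family, invoke Lemma~\ref{wtf}, use $K_1 \preceq \Kk$ to pull the witness $b$ into $K_1$ and then $K_2 \preceq \Kk$ to pull the witness $d$ into $K_2$, and conclude via $I_{K_1} \subseteq V_{b}$, $I_{K_2} \subseteq V_{d}$. The only differences are cosmetic: you spell out how to realize $U$ inside a $0$-definable family of basic neighborhoods, which the paper leaves implicit.
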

\begin{proof}
  Let $U$ be a $K_2$-definable basic neighborhood.  We can write $U$
  as $U_c$ for some 0-definable family of basic neighborhoods
  $\{U_x\}$ and some $c \in \dcl(K_2)$.  Let $\{V_x\}$ be a
  0-definable family of basic neighborhoods as in Lemma~\ref{wtf}, and
  let $b$ be such that
  \begin{equation*}
    \forall x ~\exists y : V_b \cdot V_y \subseteq U_x.
  \end{equation*}
  We can take $b \in K_1$, because $K_1 \preceq \Kk$.  Then, setting
  $x = c$ there must be some $d$ such that
  \begin{equation*}
    V_b \cdot V_d \subseteq U_b.
  \end{equation*}
  Moreover, we can take $d \in \dcl(K_2)$, because $b, c \in
  \dcl(K_2)$ and $K_2 \preceq \Kk$.  Now $V_b$ is a $K_1$-definable
  basic neighborhood and $V_d$ is a $K_2$-definable basic
  neighborhood, so
  \begin{equation*}
    I_{K_1} \cdot I_{K_2} \subseteq V_b \cdot V_d \subseteq U_b = U.
  \end{equation*}
  As $U$ was an arbitrary $K_2$-definable basic neighborhood, we
  conclude $I_{K_1} \cdot I_{K_2} \subseteq I_{K_2}$.
\end{proof}

\begin{corollary}\label{cor:inf-enclosed}
  For any small submodel $K$, the group $I_K$ is bounded.
\end{corollary}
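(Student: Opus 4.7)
The plan is to use Lemma~\ref{lem:overpower} as the key mechanism, combined with the characterization of heavy subgroups coming from Remark~\ref{rem:heav}. Since every heavy subgroup $G'$ contains the infinitesimals $I_{K'}$ of some small submodel $K' \preceq \Kk$, and $I_{K'}$ absorbs multiplication by $I_K$ (provided $K' \succeq K$), a single nonzero element of $I_{K'}$ should produce the dilation witnessing that $I_K$ is bounded.

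In more detail: let $G' \le (\Kk,+)$ be an arbitrary heavy type-definable subgroup. By Remark~\ref{rem:heav}.(\ref{ael4}) there is a small submodel $K_1 \preceq \Kk$ with $I_{K_1} \subseteq G'$. Replacing $K_1$ by a small submodel containing $K \cup K_1$, we may assume $K \preceq K_1 \preceq \Kk$ (this only shrinks $I_{K_1}$, so the inclusion into $G'$ is preserved). Lemma~\ref{lem:overpower} then gives
\begin{equation*}
  I_K \cdot I_{K_1} \subseteq I_{K_1} \subseteq G'.
\end{equation*}
By Remark~6.9.1 of \cite{prdf}, $I_{K_1}$ contains a nonzero element $\epsilon$. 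Then $\epsilon \cdot I_K \subseteq G'$, which rearranges to $I_K \subseteq \epsilon^{-1} \cdot G'$, and taking $a = \epsilon^{-1}$ verifies the defining property of boundedness.

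There is essentially no obstacle here, since the heavy lifting has been done in Lemma~\ref{lem:overpower} and Remark~\ref{rem:heav}; the only point requiring attention is verifying that we may take $K_1 \succeq K$ (so as to legally apply Lemma~\ref{lem:overpower}) and that $I_{K_1}$ is nonzero (so that $\epsilon^{-1}$ exists). Both are immediate from standard facts already used in the section.
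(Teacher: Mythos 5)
Your proof is correct. The paper's own argument reaches the same conclusion by a slightly longer route: it fixes a magic subfield $K_0$, invokes the existence of a nonzero $K_0$-pedestal $J$ type-definable over some $K_1 \supseteq K \cup K_0$, uses the fact that pedestals are bounded (Remark~\ref{rem:pedestals-enclosed}, which rests on the reduced-rank argument of Lemma~\ref{lem:super-guards}) to conclude that $I_{K_1} \subseteq J$ is bounded, and only then applies Lemma~\ref{lem:overpower} to get $\epsilon \cdot I_K \subseteq I_{K_1}$ and transfer boundedness to $I_K$. You instead unwind the definition of boundedness directly: for each heavy $G'$ you extract a model $K_1 \succeq K$ with $I_{K_1} \subseteq G'$ via Remark~\ref{rem:heav}.(\ref{ael4}), and then Lemma~\ref{lem:overpower} plus a nonzero $\epsilon \in I_{K_1}$ immediately yields $I_K \subseteq \epsilon^{-1} \cdot G'$. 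This bypasses the pedestal machinery entirely (Propositions 10.4.1 and 10.4.3 of \cite{prdf} and Remark~\ref{rem:pedestals-enclosed} are not needed), at the cost of choosing the auxiliary model $K_1$ per heavy subgroup rather than once and for all; since the definition of boundedness permits the dilation factor to depend on $G'$, this is harmless. Your two points of care --- that $K_1$ may be enlarged to contain $K$ because enlarging the model only shrinks the infinitesimals, and that $I_{K_1} \ne 0$ --- are exactly the right ones, and both are justified as you say.
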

\begin{proof}
  Take a magic subfield $K_0$.  By Proposition~10.4.1 in \cite{prdf},
  a nonzero $K_0$-pedestal $J$ exists.  Then $J$ is type-definable
  over some small model $K_1$ containing $K$ and $K_0$.  By
  Proposition~10.4.3 in \cite{prdf}, $I_{K_1} \subseteq J$.  As $J$ is
  bounded by Remark~\ref{rem:pedestals-enclosed}, it follows that
  $I_{K_1}$ is bounded.  Take a non-zero $\epsilon \in I_{K_1}$.  Then
  \begin{equation*}
    \epsilon \cdot I_K \subseteq I_{K_1} \cdot I_K \subseteq I_{K_1},
  \end{equation*}
  by Lemma~\ref{lem:overpower}.  Therefore $I_K$ is bounded.
\end{proof}

\begin{lemma}\label{lem:neo}
  Let $\Kk$ be an unstable dp-finite field.  Let $G \le (\Kk,+)$ be a
  bounded type-definable subgroup.  Suppose $G$ contains a non-zero
  $R$-submodule $M \le \Kk$ for some multi-valuation ring $R$ on
  $\Kk$.  Then $\Kk$ has valuation type.
\end{lemma}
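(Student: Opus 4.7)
My plan is to reduce to Proposition~\ref{prop:finally-a-valuation} by showing that, for some small $K \preceq \Kk$, the infinitesimal group $I_K$ is an $R^{*}$-submodule of $\Kk$ for a multi-valuation ring $R^{*}$. The natural candidate is $R^{*} := \{x \in \Kk : x I_K \subseteq I_K\}$; it suffices to argue that this subring of $\Kk$ is a multi-valuation ring.

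First, put a nonzero $R$-submodule inside $I_K$. Choose a small $K \preceq \Kk$ containing the parameters defining $R$, $G$, and $M$, and replace $M$ by a principal $R$-submodule $R m_0 \subseteq G$ for some $m_0 \in M \setminus \{0\}$, using the Bezout property (Proposition~\ref{multi-valuation-notes}.\ref{mv2}). Boundedness of $G$, applied to the heavy subgroup $I_K$, yields nonzero $a \in \Kk$ with $G \le a I_K$; then $M' := a^{-1} M$ is a nonzero $R$-submodule of $\Kk$ contained in $I_K$. Since $M'$ is in definable bijection with $R$ via multiplication by $a^{-1} m_0$, and $\dpr(R) = \dpr(\Kk)$ for a multi-valuation ring on a dp-finite field via $\Frac(R) = \Kk$ (Proposition~\ref{multi-valuation-notes}.\ref{mv3}), the subgroup $M'$ is heavy.

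Now invoke boundedness of $I_K$ itself (Corollary~\ref{cor:inf-enclosed}) against the heavy group $M'$: there is nonzero $b \in \Kk$ with $I_K \le b M'$, equivalently $b^{-1} I_K \subseteq M'$. For every $r \in R$,
\[
r \cdot b^{-1} I_K \subseteq r M' \subseteq R M' = M' \subseteq I_K,
\]
so $b^{-1} r \in R^{*}$, giving $b^{-1} R \subseteq R^{*}$. Exploiting the scaling freedom in the choices of $a$ and $b$—using Corollary~\ref{lcming} to simultaneously control valuations and adjust within the relevant stabilizer class—one can arrange $b \in R$, upgrading this to $R \subseteq b^{-1} R \subseteq R^{*}$. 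Then $R^{*}$ is a superring of the multi-valuation ring $R$, hence itself a multi-valuation ring by Proposition~\ref{superring}. Since $I_K$ is an $R^{*}$-submodule by construction, Proposition~\ref{prop:finally-a-valuation} concludes that $I_K$ is the maximal ideal of a valuation ring on $\Kk$, and $\Kk$ has valuation type.

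The main obstacle is the heaviness of $M'$—equivalently, the claim $\dpr(R) = \dpr(\Kk)$ for a multi-valuation ring on a dp-finite field. This is the one non-formal step and requires a careful dp-rank argument; everything else is direct manipulation using the already-established properties of heavy subgroups, boundedness, and the multi-valuation calculus of \S\ref{sec:mval-review}. The companion rescaling to arrange $b \in R$ is essential bookkeeping but routine once the algebraic flexibility of the multi-valuation structure is in hand.
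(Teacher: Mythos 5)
Your overall target is the right one (get $I_K$ to be a module over some multi-valuation ring and invoke Proposition~\ref{prop:finally-a-valuation}), and your first step --- using boundedness of $G$ against the type-definable heavy subgroup $I_K$ to place a nonzero $R$-submodule $M'$ inside $I_K$ --- is fine. But the middle of the argument has a genuine gap. The notions ``heavy subgroup'' and ``bounded'' in \S\ref{sec:bdd} are defined only for \emph{type-definable} subgroups of $(\Kk,+)$: boundedness of $I_K$ quantifies over type-definable heavy subgroups $G'$. The ring $R$ in the hypothesis is an arbitrary abstract multi-valuation ring and $M'$ an arbitrary $R$-submodule; neither is type-definable, so you cannot ``invoke boundedness of $I_K$ against the heavy group $M'$,'' and the heaviness of $M'$ is not even well-posed in the paper's framework. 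Moreover, even ignoring definability, $\dpr(R)=\dpr(\Kk)$ does not follow from $\Frac(R)=\Kk$: that only gives a surjection from $R\times(R\setminus\{0\})$ onto $\Kk$, hence a lower bound of $\dpr(\Kk)/2$ by subadditivity. Finally, the rescaling ``one can arrange $b\in R$'' is unsupported --- $b$ is produced by a boundedness statement over which you have no such control, and Corollary~\ref{lcming} does not obviously deliver it.

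The paper closes exactly this gap with the pedestal machinery of \cite{prdf}, \S 10, which supplies a type-definable surrogate for what you wanted $M'$ to do. One takes a nonzero $K_0$-pedestal $J$ (which is type-definable and heavy), uses boundedness of $G$ to arrange $M\subseteq G\subseteq J$ after scaling, picks $0\neq a_2\in M$ so that $Ra_2\subseteq J$ and hence $R\subseteq a_2^{-1}\cdot J$, notes that $a_2^{-1}\cdot J$ is again a pedestal (Proposition~10.4.5 of \cite{prdf}), and then applies the key pedestal property $I_K\cdot (a_2^{-1}\cdot J)\subseteq I_K$ (Proposition~10.4.3 of \cite{prdf}, for $K$ type-defining the pedestal) to get $I_K\cdot R\subseteq I_K$ in one stroke. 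That single containment replaces both of your problematic steps: there is no need to show $R^*=\{x: xI_K\subseteq I_K\}$ is a multi-valuation ring, since $I_K$ is already an $R$-module and Proposition~\ref{prop:finally-a-valuation} applies directly. If you want to salvage your stabilizer-ring approach, you would need some type-definable heavy group sandwiched appropriately around $R$ --- which is precisely what the pedestal provides.
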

\begin{proof}
  Choose a magic subfield $K_0$ and let $\Lambda = \Lambda_{K_0}$ be
  the lattice of type-definable $K_0$-linear subspaces of $\Kk$.  By
  Proposition~10.4.1 in \cite{prdf}, there is a nonzero $K_0$-pedestal
  $J$.  By Proposition~10.1.3 in \cite{prdf}, $J$ is heavy.  By
  definition of bounded, there is $a_1 \in \Kk^\times$ such that $a_1
  \cdot G \subseteq J$.  Replacing $G$ and $M$ with $a_1 \cdot G$ and
  $a_1 \cdot M$, we may assume $M \subseteq G \subseteq J$.  Let $a_2$
  be a non-zero element of $M$.  Then
  \begin{equation*}
    Ra_2 \subseteq M \subseteq G \subseteq J
  \end{equation*}
  and so
  \begin{equation*}
    R \subseteq a_2^{-1} \cdot J.
  \end{equation*}
  By Proposition~10.4.5 in \cite{prdf}, the group $a_2^{-1} \cdot J$
  is still a $K_0$-pedestal.  Let $K$ be a small field over which $a_2^{-1}
  \cdot J$ is type-definable.  By Proposition~10.4.3 in \cite{prdf},
  \begin{equation*}
    I_K \cdot R \subseteq I_K \cdot (a_2^{-1} \cdot J) \subseteq I_K.
  \end{equation*}
  Thus $I_K$ is an $R$-module, and $\Kk$ has valuation type by
  Proposition~\ref{prop:finally-a-valuation}.
\end{proof}

\begin{theorem}\label{thm:val-multival-type}
  Let $\Kk$ be a monster dp-finite unstable field.  The following are
  equivalent
  \begin{enumerate}
  \item \label{n1} $\Kk$ has valuation type.
  \item \label{n1.5} For every small submodel $K \preceq \Kk$, $K$ has
    valuation type.
  \item \label{n2} For every small submodel $K \preceq \Kk$, the
    infinitesimals $I_K$ are the maximal ideal of a valuation ring
    $\Kk$.
  \item \label{n3} For some small submodel $K \preceq \Kk$, the
    infinitesimals $I_K$ contain a nonzero $R$-submodule of $\Kk$, for
    some multi-valuation ring $R \subseteq \Kk$.
  \item \label{n4} Some bounded subgroup $G \le (\Kk,+)$ contains a
    nonzero $R$-submodule of $\Kk$, for some multi-valuation ring $R
    \subseteq \Kk$.
  \end{enumerate}
\end{theorem}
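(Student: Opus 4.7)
The plan is to close the cycle $(n1) \Rightarrow (n1.5) \Rightarrow (n2) \Rightarrow (n3) \Rightarrow (n4) \Rightarrow (n1)$. Essentially all of the genuinely substantive work has already been established in earlier propositions; the theorem is mainly a matter of threading those results together.

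For $(n1) \Leftrightarrow (n1.5)$ I would appeal to Proposition \ref{prop:val-type-equiv}: any small $K \preceq \Kk$ has the same first-order theory as $\Kk$, so being valuation type transfers between them in both directions. For $(n1.5) \Rightarrow (n2)$ I would invoke the algebraic equivalence recalled at the beginning of Section \ref{sec:val-type}: the canonical topology on $K$ being a V-topology is equivalent to $\Kk \setminus I_K$ being multiplicatively closed, which in turn is equivalent to $I_K$ being the maximal ideal of a valuation ring on $\Kk$.

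For $(n2) \Rightarrow (n3)$, note that a single valuation ring $\Oo$ is a (trivial) multi-valuation ring, and $I_K = \mm_\Oo$ is itself a nonzero $\Oo$-submodule of $\Kk$; it is nonzero because the canonical topology is non-discrete, as recorded at the start of Section \ref{sec:val-type}. For $(n3) \Rightarrow (n4)$ I would simply take $G := I_K$: Corollary \ref{cor:inf-enclosed} gives that $G$ is bounded, and by hypothesis it already contains the required multi-valuation submodule. Finally, $(n4) \Rightarrow (n1)$ is exactly the statement of Lemma \ref{lem:neo}.

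The main obstacle is not really an obstacle at this point, since all the serious algebra has been absorbed into Proposition \ref{prop:finally-a-valuation} (passing from a multi-valuation submodule of $I_K$ to a single valuation ring, using $J \cdot J = J$, $1 + J \le \Kk^\times$, and the surjectivity of the squaring or Artin--Schreier map on $1 + I_K$) and into Lemma \ref{lem:neo} (reducing the bounded case to the infinitesimal case by rescaling through a $K_0$-pedestal). The only mildly subtle point is that $(n3)$ is existential over small submodels while $(n2)$ is universal, but the cycle handles this automatically: the existential witness in $(n3)$ is first upgraded to the statement $(n1)$ about the monster itself via $(n4)$ and Lemma \ref{lem:neo}, and then $(n1)$ propagates down to every small submodel by $(n1) \Rightarrow (n1.5) \Rightarrow (n2)$.
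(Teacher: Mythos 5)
Your proposal is correct and follows essentially the same route as the paper: the paper also chains $(\ref{n2})\Rightarrow(\ref{n3})\Rightarrow(\ref{n4})\Rightarrow(\ref{n1})$ via Corollary~\ref{cor:inf-enclosed} and Lemma~\ref{lem:neo}, and links $(\ref{n1})$, $(\ref{n1.5})$, $(\ref{n2})$ using Proposition~\ref{prop:val-type-equiv} together with the discussion at the start of \S\ref{sec:val-type}. The only difference is the cosmetic one of which pair of statements Proposition~\ref{prop:val-type-equiv} is attached to.
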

\begin{proof}
  The equivalence of (\ref{n1}) and (\ref{n2}) is
  Proposition~\ref{prop:val-type-equiv}.  The equivalence of
  (\ref{n1.5}) and (\ref{n2}) is discussed at the start of
  \S\ref{sec:val-type}.  The implication
  (\ref{n2})$\implies$(\ref{n3}) is trivial, because the
  infinitesimals are non-trivial and ideals are submodules.  Point
  (\ref{n3}) implies (\ref{n4}), by Corollary~\ref{cor:inf-enclosed}.
  Finally, the implication (\ref{n4})$\implies$(\ref{n1}) is
  Lemma~\ref{lem:neo}.
\end{proof}

We won't need the following fact, but it is nice to know conceptually:
\begin{proposition}
  If $G_1, G_2$ are two bounded type-definable subgroups of $(\Kk,+)$,
  then $G_1 + G_2$ is also bounded.
\end{proposition}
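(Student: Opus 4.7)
The goal is to show that for any heavy subgroup $H \le (\Kk,+)$, there is nonzero $a \in \Kk$ with $G_1 + G_2 \subseteq a \cdot H$. By Remark~\ref{rem:heav}, every heavy subgroup contains $I_K$ for some small $K \preceq \Kk$, so it suffices to find nonzero $a$ such that $G_1 + G_2 \subseteq a \cdot I_K$. I may assume each $G_i$ is nonzero, since otherwise the claim reduces to the bounded-ness of the other summand. The plan is to produce a single ``small'' scalar $\epsilon \in \Kk^\times$ that simultaneously shrinks $G_1$ and $G_2$ into $I_K$, and then take $a = \epsilon^{-1}$.

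First I apply the bounded-ness of each $G_i$ to the heavy subgroup $I_K$ to obtain $a_1, a_2 \in \Kk^\times$ with $G_i \subseteq a_i \cdot I_K$. Then I enlarge $K$ to a small submodel $K' \succeq K$ containing both $a_1$ and $a_2$, and pick a nonzero $\epsilon \in I_{K'}$ (possible by Remark~6.9.1 of \cite{prdf}). The key computation is
\begin{equation*}
  \epsilon \cdot G_i \;\subseteq\; (\epsilon a_i) \cdot I_K \;\subseteq\; I_{K'} \cdot I_K \;\subseteq\; I_{K'} \;\subseteq\; I_K,
\end{equation*}
where $\epsilon a_i \in I_{K'}$ because $a_i \in K'$ and $K' \cdot I_{K'} \subseteq I_{K'}$ (Remark~6.9.3 of \cite{prdf}), while the inclusion $I_{K'} \cdot I_K \subseteq I_{K'}$ is Lemma~\ref{lem:overpower} applied to $K \preceq K'$. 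Since $I_K$ is an additive subgroup,
\begin{equation*}
  \epsilon(G_1 + G_2) = \epsilon G_1 + \epsilon G_2 \;\subseteq\; I_K + I_K = I_K,
\end{equation*}
so $G_1 + G_2 \subseteq \epsilon^{-1} \cdot I_K \subseteq \epsilon^{-1} \cdot H$, and $a = \epsilon^{-1}$ is the required scalar.

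The only delicate point is reconciling the two scalars $a_1, a_2$ coming from the separate bounded-nesses of $G_1$ and $G_2$ into a single scalar good for the sum. My plan resolves this by choosing $\epsilon$ small enough, in a model $K'$ that already knows about $a_1$ and $a_2$, so that each product $\epsilon a_i$ lands in $I_{K'}$. Lemma~\ref{lem:overpower} then ensures that $\epsilon a_i$ acts on $I_K$ as a contraction, and closure of $I_K$ under addition finishes the argument.
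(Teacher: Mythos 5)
Your proof is correct. The overall shape matches the paper's: both arguments obtain scalars $a_1, a_2$ from the boundedness of $G_1$ and $G_2$ separately, then manufacture a single nonzero $\epsilon$ with $\epsilon \cdot G_i \subseteq I_K$ for both $i$, and conclude $G_1 + G_2 \subseteq \epsilon^{-1} \cdot I_K$. The difference is in how the two scalars are absorbed. The paper re-enters the pedestal machinery of \cite{prdf} \S 10: it takes a nonzero $K_0$-pedestal $J$ as the reference heavy group, uses that the scalings $a_i \cdot J$ are again pedestals, picks $K$ type-defining both, and invokes Proposition~10.4.3 of \cite{prdf} to get $I_K \cdot (a_i \cdot J) \subseteq I_K$. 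You instead take $I_K$ itself as the reference heavy group (legitimate, since $I_K$ is heavy by Lemma~\ref{lemma-surprise} and Remark~\ref{rem:heav}), pass to $K' \succeq K$ containing $a_1, a_2$, and absorb the scalars via $K' \cdot I_{K'} \subseteq I_{K'}$ together with Lemma~\ref{lem:overpower}; this keeps the argument inside the toolkit already assembled in \S\ref{sec:bdd} and avoids citing the pedestal results again. A further minor divergence: the paper finishes by appealing to Corollary~\ref{cor:inf-enclosed} ($I_K$ is bounded, and subgroups of scalings of bounded groups are bounded), whereas you verify the definition of boundedness directly against an arbitrary heavy $H$, letting $K$, and hence $\epsilon$, depend on $H$ --- which the definition permits. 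Both endings are fine; yours is marginally more self-contained, the paper's is marginally shorter.
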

\begin{proof}
  Fix a magic field $K_0$.  By Proposition~10.4.1 in \cite{prdf},
  there is at least one non-zero $K_0$-pedestal $J$.  Then $J$ is
  heavy by \cite{prdf}, Proposition~10.1.3.  By definition of bounded,
  there are $a_1, a_2 \in \Kk^\times$ such that $G_1 \subseteq a_1
  \cdot J$ and $G_2 \subseteq a_2 \cdot J$.  By Proposition~10.4.5 in
  \cite{prdf}, scalings of $K_0$-pedestals are still $K_0$-pedestals,
  so $a_1 \cdot J$ and $a_2 \cdot J$ are $K_0$-pedestals.  Take a
  small model $K$ containing $K_0$ and type-defining the pedestals
  $a_1 \cdot J$ and $a_2 \cdot J$.  By Proposition~10.4.3 in
  \cite{prdf},
  \begin{align*}
    I_K \cdot (a_1 \cdot J) \subseteq I_K \\
    I_K \cdot (a_2 \cdot J) \subseteq I_K.
  \end{align*}
  Let $\epsilon$ be a non-zero $K$-infinitesimal.  Then
  \begin{align*}
    \epsilon \cdot G_1 & \subseteq I_K \cdot (a_1 \cdot J) \subseteq I_K \\
    \epsilon \cdot G_2 & \subseteq I_K \cdot (a_2 \cdot J) \subseteq I_K.
  \end{align*}
  Thus
  \begin{equation*}
    G_1 + G_2 \subseteq \epsilon^{-1} \cdot I_K.
  \end{equation*}
  By Corollary~\ref{cor:inf-enclosed}, it follows that $G_1 + G_2$ is
  bounded.
\end{proof}
As a consequence, bounded subgroups form an (unbounded) sublattice of
$\Lambda$.

\section{Finite extensions and henselianity}\label{sec:feh}
In this section, $\Kk$ is a saturated dp-finite field, no longer
assumed to be unstable.
\subsection{Finite extensions}\label{sec:9.1}

\begin{lemma}\label{lem:int-clos-algebra}
  Let $L/K$ be a finite separable extension.  Let $\Oo$ be a valuation
  ring on $K$ with maximal ideal $\mm$.  Choose some $K$-linear
  identification of $L$ with $K^d$, for $d = [L : K]$.  Then $\mm^d$
  contains a nonzero $R$-submodule of $L$ for some multi-valuation
  ring $R$ on $L$.
\end{lemma}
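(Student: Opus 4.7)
The plan is to take $R=\tilde\Oo$, the integral closure of $\Oo$ in $L$. Since $L/K$ is finite, standard valuation theory gives finitely many valuation rings $\Oo_1,\ldots,\Oo_n$ on $L$ extending $\Oo$, with $\tilde\Oo=\bigcap_i\Oo_i$, so $\tilde\Oo$ is a multi-valuation ring on $L$. Under the given basis $e_1,\ldots,e_d$ identifying $L$ with $K^d$, the set $\mm^d$ corresponds to $\mm M\subseteq L$, where $M:=\Oo e_1+\cdots+\Oo e_d$. (Implicitly we must assume $\Oo$ is nontrivial, i.e.\ $\mm\ne 0$; otherwise the conclusion is vacuously false.)

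I reduce to producing a nonzero $\beta\in K^\times$ with $\beta\tilde\Oo\subseteq M$. For then $\mm\cdot\beta\tilde\Oo=\beta(\mm\tilde\Oo)$ is a nonzero $\tilde\Oo$-submodule of $L$ lying inside $\mm M=\mm^d$, so $R=\tilde\Oo$ works.

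The existence of such a $\beta$ is the standard discriminant argument. Choose an auxiliary $K$-basis $f_1,\ldots,f_d$ of $L$ with each $f_i\in\tilde\Oo$; this is possible because $L=\Frac(\tilde\Oo)$, so any $K$-basis of $L$ can be cleared into $\tilde\Oo$. Set $N:=\sum_i\Oo f_i\subseteq\tilde\Oo$. Separability of $L/K$ makes the trace form nondegenerate, so $\delta:=\det(\Tr_{L/K}(f_if_j))\ne 0$; moreover $\delta\in\Oo$, because the matrix entries are traces of elements of $\tilde\Oo$, which are integral over $\Oo$ and lie in $K$, hence in $\Oo$. For any $x\in\tilde\Oo$, writing $x=\sum_jx_jf_j$ with $x_j\in K$, the linear system $\Tr(xf_i)=\sum_jT_{ij}x_j$ (with $T=(\Tr(f_if_j))$) has right-hand side in $\Oo$, so multiplying by the adjugate of $T$ yields $\delta x_j\in\Oo$ for every $j$. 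Thus $\delta\tilde\Oo\subseteq N$.

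Finally, $M$ and $N$ are both finitely generated $\Oo$-submodules of $L$ that span $L$ over $K$, hence commensurable: writing $f_i=\sum_ja_{ij}e_j$ with $a_{ij}\in K$ and picking $c\in K^\times$ with $v(c)\ge-\min_{i,j}v(a_{ij})$ (such $c$ exists since $v$ takes values in a totally ordered abelian group), one has $ca_{ij}\in\Oo$ for all $i,j$, so $cN\subseteq M$. Setting $\beta:=c\delta\in K^\times$ gives $\beta\tilde\Oo\subseteq cN\subseteq M$, as required. There is no genuine obstacle here; this is the classical ``integral closure is trapped between finitely generated $\Oo$-modules via the discriminant'' argument, which survives the non-Noetherian valuation setting because one only clears finitely many denominators at a time.
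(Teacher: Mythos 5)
Your proof is correct and follows essentially the same route as the paper's: both take $R$ to be the intersection of the finitely many extensions of $\Oo$ to $L$ (your $\tilde{\Oo}$, which the paper constructs via the normal closure), both use separability to make the trace form non-degenerate, and both clear denominators to trap a scaled copy of $R$ inside $\mm e_1 + \cdots + \mm e_d$ --- your discriminant/adjugate computation and the paper's dual-basis matrix $M_{ij}$ are the same piece of linear algebra. Your parenthetical that $\Oo$ must be non-trivial is well taken: the paper's proof likewise needs $\mm \ne 0$ when it picks $\beta$ with $\beta M_{ij} \in \mm$, and in the intended application $\Oo$ is indeed non-trivial.
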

This is a variant of well-known facts in commutative algebra, but we
give the proof regardless.
\begin{proof}
  \begin{claim}
    There are valuation rings $\Oo_1, \ldots, \Oo_n$ on $L$, extending
    $\Oo$ on $K$, such that for any $x \in L$:
    \begin{equation*}
      x \in \bigcap_{i = 1}^n \Oo_i \implies \Tr_{L/K}(x) \in \Oo.
    \end{equation*}
  \end{claim}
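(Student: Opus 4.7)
The plan is to take $\Oo_1, \ldots, \Oo_n$ to be the complete list of valuation rings on $L$ that extend $\Oo$. Standard valuation theory (e.g.\ Bourbaki, \emph{Commutative Algebra} VI.8) tells us that this list is finite and non-empty, and that
\[
\bigcap_{i=1}^n \Oo_i = \widetilde{\Oo},
\]
where $\widetilde{\Oo}$ denotes the integral closure of $\Oo$ in $L$. So the claim reduces to verifying that $\Tr_{L/K}(x) \in \Oo$ whenever $x \in \widetilde{\Oo}$.

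To verify this trace statement, I would pass to a normal closure $M$ of $L/K$. Every $K$-embedding $\sigma : L \hookrightarrow M$ carries elements integral over $\Oo$ to elements integral over $\Oo$, since integrality is preserved by $K$-algebra homomorphisms. Separability of $L/K$ gives
\[
\Tr_{L/K}(x) = \sum_\sigma \sigma(x),
\]
with $\sigma$ ranging over the $K$-embeddings $L \hookrightarrow M$. The right-hand side is a finite sum of elements integral over $\Oo$ and hence is itself integral over $\Oo$. It also lies in $K$, so because the valuation ring $\Oo$ is integrally closed in its fraction field $K$, we conclude $\Tr_{L/K}(x) \in \Oo$.

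There is no serious obstacle. The two ingredients are classical: finiteness of the set of extensions of $\Oo$ to a finite extension and the identification of $\bigcap_i \Oo_i$ with the integral closure are both standard facts of valuation theory, and the trace computation via Galois conjugates in a normal closure is routine. The only very minor subtlety is that one must invoke separability of $L/K$ to write the trace as a sum over embeddings, but this is hypothesized.
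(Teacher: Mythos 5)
Your proof is correct and follows essentially the same route as the paper: pass to a normal closure, write the trace as a sum over $K$-embeddings, and observe that the summands stay ``integral'' so the trace lands in $\Oo$. The only cosmetic difference is that you invoke the classical identification of $\bigcap_i \Oo_i$ with the integral closure of $\Oo$ in $L$ and use that valuation rings are integrally closed, whereas the paper works directly with the intersection $R'$ of the extensions of $\Oo$ to the normal closure, noting that $R'$ is a $\Gal(F/K)$-stable ring with $R' \cap K = \Oo$; both are standard and equally valid.
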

  \begin{claimproof}
    Let $F$ be the normal closure of $L$ over $K$, so $F/L$ is finite
    and $F/K$ is Galois.  Let $\Oo'_1, \ldots, \Oo'_n$ enumerate all
    the extensions of $\Oo$ to $F$, and let $R'$ be the intersection
    of the $\Oo'_i$.  Then $R' \cap K = \Oo$, and $R'$ is fixed
    setwise by $\Gal(F/K)$.  Let $\sigma_1, \ldots, \sigma_d$ be the
    embeddings of $L$ into $F$ over $K$.  Each $\sigma$ can be
    extended to an automorphism of $F$, so
    \begin{equation*}
      x \in L \cap R' \implies \sigma_i(x) \in R'.
    \end{equation*}
    Now the trace of $x$ is by definition $\sum_{i = 1}^d
    \sigma_i(x)$, so we see
    \begin{equation*}
      x \in L \cap R' \implies \Tr_{L/K}(x) = \sum_{i = 1}^d
      \sigma_i(x) \in R'.
    \end{equation*}
    But $\Tr_{L/K}(x)$ is also in $K$, so
    \begin{equation*}
      x \in L \cap R' \implies \Tr_{L/K}(x) \in R' \cap K = \Oo.
    \end{equation*}
    Now $L \cap R' = \bigcap_{i = 1}^n \Oo_i$, where $\Oo_i$ is
    $\Oo'_i \cap L$, the restriction of $\Oo'_i$ to $L$.  Each $\Oo_i$
    is an extension from $K$ to $L$ of the original ring $\Oo$.
  \end{claimproof}
  Let $R$ be the intersection of the $\Oo_i$.  Let $e_1, \ldots, e_d$
  be the basis for $L$ over $K$ coming from the given identification
  of $L$ with $K^d$.  Because the extension $L/K$ is finite, the value
  group of $\Oo$ is cofinal in the value group of each $\Oo_i$.
  Therefore, by taking $\alpha \in K^\times$ with high enough
  valuation, we can ensure
  \begin{equation*}
    \{\alpha e_1 , \ldots, \alpha e_d \} \subseteq R.
  \end{equation*}
  Now for any $x \in R$ and $1 \le i \le d$, we have
  \begin{equation*}
    \Tr_{L/K}(x \alpha e_i) \in \Oo
  \end{equation*}
  by the claim.  Separability of $L/K$ implies that the trace pairing
  is non-degenerate.  The set of $\alpha e_i$ is a basis for $L$ over
  $K$.  Therefore, there is a $d \times d$ matrix $M_{ij}$ with
  entries from $K$ such that for any $x \in L$,
  \begin{equation*}
    x = \sum_{i = 1}^d \left(\sum_{j = 1}^d M_{ij} \Tr_{L/K}(x \alpha e_j)\right) e_i
  \end{equation*}
  We can choose $\beta \in K^\times$ such that $\beta M_{ij} \in \mm$
  for all $i, j$.  Then for any $x \in R$, we have
  \begin{equation*}
    \beta x = \sum_{i = 1}^d \sum_{j = 1}^d (\beta M_{ij})
    (\Tr_{L/K}(x \alpha e_j)) e_i \in \mm e_1 + \mm e_2 + \cdots + \mm
    e_d.
  \end{equation*}
  Thus $\mm^d = \mm e_1 + \cdots + \mm e_d$ contains the $R$-submodule
  $\beta R$.
\end{proof}

If $\Kk$ is some field, possibly with extra structure, and $\Ll$ is a
finite extension, we view $\Ll$ as a field with the following
structure:
\begin{itemize}
\item A unary predicate for the subfield $\Kk$.
\item The original structure on $\Kk$.
\end{itemize}
Up to naming finitely many parameters, the resulting structure is
bi-interpretable with $\Kk$, being interpreted as $\Kk^d$ for $d =
[\Ll : \Kk]$.  In particular, \[ \dpr(\Ll) = d \cdot \dpr(\Kk).\]

\begin{lemma}\label{power-bounded}
  Let $\Kk$ be an unstable dp-finite field.  Let $\Ll/\Kk$ be a finite
  extension of degree $d$.  Choose any $\Kk$-linear bijection $\Ll
  \cong \Kk^d$.
  \begin{enumerate}
  \item If $H$ is a heavy subgroup of $\Ll$, then $H \supseteq I_K^d$
    for some small subfield $K \preceq \Kk$.
  \item \label{pb2} If $G$ is a bounded subgroup of $\Kk$, then $G^d$
    is a bounded subgroup of $\Ll$.
  \end{enumerate}
\end{lemma}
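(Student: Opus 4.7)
The plan is to prove (1) directly and derive (2) as a one-line corollary. For (1), let $H \le \Ll$ be a heavy type-definable subgroup, which by the natural transfer of Remark~\ref{rem:heav} to the unstable dp-finite field $\Ll$ means $\dpr(H) = \dpr(\Ll) = d \cdot \dpr(\Kk)$. Under the chosen $\Kk$-linear identification $\Ll \cong \Kk^d$ coming from the basis $e_1,\ldots,e_d$, I will isolate the one-dimensional slices of $H$ along the coordinate axes $\Kk \cdot e_i$ and apply Remark~\ref{rem:heav} inside $\Kk$ to each.

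For each $i$, set $H_i := H \cap (\Kk \cdot e_i)$; under the isomorphism $\Kk \cdot e_i \cong \Kk$ this is a type-definable subgroup of $\Kk$. The first step is to show $\dpr(H_i) = \dpr(\Kk)$. For this, consider the projection $\pi_i : \Kk^d \to \Kk^{d-1}$ that forgets the $i$-th coordinate. The image $\pi_i(H)$ has dp-rank at most $(d-1)\dpr(\Kk)$, the kernel of $\pi_i|_H$ is exactly $H_i$, and subadditivity of dp-rank along $\pi_i|_H : H \to \pi_i(H)$ (whose fibers are cosets of $H_i$) gives
\[ d \cdot \dpr(\Kk) = \dpr(H) \le \dpr(\pi_i(H)) + \dpr(H_i) \le (d-1)\dpr(\Kk) + \dpr(\Kk), \]
forcing $\dpr(H_i) = \dpr(\Kk)$. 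Now Remark~\ref{rem:heav} produces a small $K_i \preceq \Kk$ with $H_i \supseteq I_{K_i}$. Choose a single small $K \preceq \Kk$ containing all of $K_1,\ldots,K_d$; then $I_K \subseteq I_{K_i}$ (a larger parameter set can only shrink the infinitesimals), so $H \cap (\Kk \cdot e_i) \supseteq I_K \cdot e_i$ for every $i$. Since $H$ is closed under addition, $H \supseteq \sum_{i=1}^d I_K \cdot e_i = I_K^d$.

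For (2), suppose $G \le \Kk$ is bounded and $H \le \Ll$ is heavy. By (1), pick $K \preceq \Kk$ with $H \supseteq I_K^d$, and by boundedness of $G$ pick nonzero $a \in \Kk$ with $G \subseteq a \cdot I_K$. The identification is $\Kk$-linear, so multiplication by $a$ (viewed in $\Ll$ via $\Kk \hookrightarrow \Ll$) corresponds to componentwise scaling on $\Kk^d$; hence $G^d \subseteq (a \cdot I_K)^d = a \cdot I_K^d \subseteq a \cdot H$, which verifies that $G^d$ is bounded in $\Ll$. The main technical point that needs care is the subadditivity of dp-rank for a projection of a type-definable subgroup; this is routine for definable subgroups, and in the type-definable setting it can either be invoked directly or recovered by approximating $H$ from above by definable sets of full dp-rank.
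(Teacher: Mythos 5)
Your proposal is correct and follows essentially the same route as the paper: the paper also isolates the coordinate-axis slices of $H$, shows each has full dp-rank in $\Kk$ via subadditivity (phrased there through a generic element $a\in H$ and the coset $T=S+a_1$ rather than your global fibration inequality, but it is the same computation), and then invokes Remark~\ref{rem:heav} to get $I_K$ inside each slice before summing. Part (2) is likewise derived from part (1) exactly as you do.
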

\begin{proof}
  Naming parameters, we may assume that the identification of $\Ll$
  with $\Kk^d$ is 0-definable.
  \begin{enumerate}
  \item View $H$ as a subgroup of $\Kk^d$.  Take $K$ a small submodel
    over which $H$ is type-definable.  Take $a \in H$ with $\dpr(a/K)
    = \dpr(\Ll) = d \cdot \dpr(\Kk)$.  Write $a = (a_1,\ldots,a_d)$.
    Then
    \begin{equation*}
      \dpr(a_1/Ka_2,\ldots,a_d) = \dpr(\Kk),
    \end{equation*}
    by subadditivity of dp-rank.  Consider the type-definable sets
    \begin{align*}
      S &= \{x \in \Kk : (x,0,\ldots,0) \in H\} \\
      T &= \{x \in \Kk : (x,a_2,\ldots,a_d) \in H\}.
    \end{align*}
    Then $T$ is a coset of $S$, because $H$ is a group.  Thus $T = S +
    a_1$.  As $T$ is definable over $Ka_2,\ldots,a_d$, we have
    \begin{equation*}
      \dpr(S) = \dpr(T) \ge \dpr(a_1/Ka_2,\ldots,a_d) = \dpr(\Kk).
    \end{equation*}
    Thus $S$ is a heavy subgroup of $\Kk$.  As $S$ is $K$-definable,
    $I_K \subseteq S$.  This means that
    \begin{equation*}
      I_K \oplus 0^{d-1} \subseteq H.
    \end{equation*}
    A similar argument shows
    \begin{equation*}
      0^{i} \oplus I_K \oplus 0^{d-1-i} \subseteq H
    \end{equation*}
    for $0 \le i \le d$.  Therefore $I_K^d \subseteq H$.
  \item Let $H$ be any heavy subgroup of $\Ll$.  By the first point,
    there is $K \preceq \Kk$ such that $I_K^d \subseteq H$.  By
    Corollary~\ref{cor:inf-enclosed}, the group $I_K$ is bounded, so
    there is non-zero $a \in K$ such that $a \cdot G \subseteq I_K$.
    Then
    \begin{equation*}
      a \cdot (G^d) \subseteq a \cdot (I_K^d) \subseteq H,
    \end{equation*}
    proving that $G^d$ is bounded. \qedhere
  \end{enumerate} 
\end{proof}

\begin{theorem}\label{thm:finite-extensions}
  Let $\Kk$ be a field of valuation type.  Let $\Ll$ be a finite
  extension, viewed as a structure with a predicate for $\Ll$.  Then
  $\Ll$ has valuation type.
\end{theorem}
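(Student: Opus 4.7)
The plan is to verify condition~\ref{n4} of Theorem~\ref{thm:val-multival-type} for $\Ll$: exhibit a bounded type-definable subgroup $G \le (\Ll,+)$ that contains a nonzero $R$-submodule of $\Ll$ for some multi-valuation ring $R$ on $\Ll$. Once this is done, the theorem yields that $\Ll$ has valuation type. The candidate for $G$ will be $I_K^d$, viewed inside $\Ll$ via a $\Kk$-linear identification $\Ll \cong \Kk^d$; the candidate for $R$ will be the intersection of the valuation rings of $\Ll$ lying over the valuation ring $\Oo$ on $\Kk$ whose maximal ideal is $I_K$.

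First I would verify that Theorem~\ref{thm:val-multival-type} applies to $\Ll$, i.e.\ that $\Ll$ is a (sufficiently saturated) dp-finite unstable field. Dp-finiteness follows from the bi-interpretability noted just before Lemma~\ref{power-bounded}, which also gives $\dpr(\Ll) = d \cdot \dpr(\Kk)$. For instability: $\Kk$ is a definable subset of $\Ll$ (via the new predicate) and $\Kk$ is unstable, so $\Ll$ cannot be stable. Sufficient saturation of $\Ll$ is inherited from $\Kk$ by the same bi-interpretation (possibly after naming a basis).

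Next I would execute the main step assuming that $\Ll/\Kk$ is separable (automatic in characteristic $0$, which is the main setting of interest). By the equivalence of~\ref{n1} and~\ref{n2} in Theorem~\ref{thm:val-multival-type} applied to $\Kk$, choose a small submodel $K \preceq \Kk$ such that $I_K$ is the maximal ideal of a valuation ring $\Oo$ on $\Kk$. Apply Lemma~\ref{lem:int-clos-algebra} to $\Ll/\Kk$ and $\Oo$: this produces a multi-valuation ring $R$ on $\Ll$ (the intersection of the extensions of $\Oo$ to $\Ll$) together with a nonzero $R$-submodule of $\Ll$ contained in $I_K^d \subseteq \Kk^d = \Ll$. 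For boundedness, Corollary~\ref{cor:inf-enclosed} gives that $I_K$ is a bounded subgroup of $(\Kk,+)$, and then Lemma~\ref{power-bounded}.\ref{pb2} promotes this to boundedness of $I_K^d$ in $(\Ll,+)$. Thus $G := I_K^d$ and $R$ witness condition~\ref{n4}, so Theorem~\ref{thm:val-multival-type} gives that $\Ll$ has valuation type.

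The main obstacle is the inseparable case in characteristic $p > 0$: the proof of Lemma~\ref{lem:int-clos-algebra} uses the trace pairing, which is degenerate when $\Ll/\Kk$ has an inseparable part, so the construction of $R$ above does not go through verbatim. The natural remedy is to factor $\Ll/\Kk$ through the maximal separable subextension $\Ll_s$, apply the argument above to $\Ll_s/\Kk$ to get that $\Ll_s$ has valuation type, and then handle the purely inseparable step $\Ll/\Ll_s$ separately, using that valuations extend uniquely through purely inseparable extensions (so that the unique extension of the valuation ring on $\Ll_s$ to $\Ll$ plays the role of $R$, and its maximal ideal, shown bounded via Lemma~\ref{power-bounded}, supplies $G$). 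For the intended applications in this paper, characteristic $p$ is already covered by \cite{prdf}, Theorem~2.8 and Corollary~11.4, so this inseparable reduction is not strictly necessary.
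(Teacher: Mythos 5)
Your proof is correct and essentially identical to the paper's: identify $\Ll$ with $\Kk^d$, verify condition~\ref{n4} of Theorem~\ref{thm:val-multival-type} with $G = I_K^d$ and the multi-valuation ring supplied by Lemma~\ref{lem:int-clos-algebra}, using Corollary~\ref{cor:inf-enclosed} and Lemma~\ref{power-bounded}.\ref{pb2} for boundedness. The one place you diverge—the detour through the maximal separable subextension—is unnecessary, since dp-finite fields are perfect (if $\Kk \ne \Kk^p$ then $\dpr(\Kk) = [\Kk:\Kk^p]\cdot\dpr(\Kk^p) = [\Kk:\Kk^p]\cdot\dpr(\Kk)$, a contradiction), so $\Ll/\Kk$ is automatically separable; this is exactly how the paper disposes of the issue in a footnote.
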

\begin{proof}
  Identify $\Ll$ with $\Kk^d$.  Take a small model $K \preceq \Kk$.
  Then $I_K$ is the maximal ideal of a valuation ring on $\Kk$, so
  $I_K^d$ contains a nonzero $R$-submodule of $\Ll$ for some
  multi-valuation ring $R$ on $\Ll$, by
  Lemma~\ref{lem:int-clos-algebra}.\footnote{The extension $\Ll/\Kk$
    is separable because $\Kk$ is perfect, which in turn holds by a
    trivial dp-rank calculation.}  By
  Lemma~\ref{power-bounded}.\ref{pb2} and
  Corollary~\ref{cor:inf-enclosed}, $I_K^d$ is a bounded subgroup of
  $\Ll$, and so $\Ll$ has valuation type by
  Theorem~\ref{thm:val-multival-type}.
\end{proof}

\subsection{Towards henselianity}
\begin{lemma}\label{non-independence}
  Suppose $\Kk$ is valuation type or stable.  If $\Oo_1$ and $\Oo_2$
  are two $\vee$-definable non-trivial valuation rings on $\Kk$, then
  they are not independent: $\Oo_1 \cdot \Oo_2 \ne \Kk$.
\end{lemma}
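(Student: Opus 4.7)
The proof splits along the dichotomy on $\Kk$.

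\textbf{Stable case.} This is vacuous. Every $\vee$-definable set is $A$-invariant for some small $A \subseteq \Kk$, and Lemma~\ref{stable-valuation-rings} rules out any non-trivial $A$-invariant valuation ring on a sufficiently saturated stable field. Hence no such $\Oo_1$ exists.

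\textbf{Valuation-type case.} My plan is to show that every non-trivial $\vee$-definable valuation ring $\Oo$ on $\Kk$ is contained in the canonical valuation ring $\Oo_L$ of some sufficiently large small submodel $L \preceq \Kk$. Applied to both $\Oo_1, \Oo_2$ and passing to a common $L$, this yields $\Oo_1 \cdot \Oo_2 \subseteq \Oo_L \neq \Kk$, as desired.

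The first step is to verify that $\Oo$ is heavy. From the valuation identity $\Kk = \Oo \cup \Oo^{-1}$, the fact that inversion is a definable bijection on $\Kk^\times$ (so $\dpr(\Oo^{-1}) = \dpr(\Oo)$), and the max-additivity of dp-rank on $\vee$-definable unions, one obtains $\dpr(\Oo) = \dpr(\Kk)$. Writing $\Oo$ as a union of definable sets, compactness then produces a heavy $K$-definable $D \subseteq \Oo$ for some small $K$; since $\Oo$ is a ring, $D - D \subseteq \Oo$, and Corollary~\ref{alternative-bases} gives $D - D \supseteq I_K$. Hence $I_K \subseteq \Oo$.

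The second step upgrades $I_K \subseteq \Oo$ to $I_{K'} \subseteq \mm$ for some $K' \succeq K$---equivalently, $\Oo \subseteq \Oo_{K'}$ via Theorem~\ref{thm:val-multival-type}. Choose a non-zero $\pi \in \mm$ and enlarge $K$ to contain $\pi$. Then $\pi \cdot I_K$ is a type-definable, heavy subgroup of $(\Kk,+)$ sitting inside $\pi \cdot \Oo \subseteq \mm$ (using $I_K \subseteq \Oo$ and that $\mm$ is an $\Oo$-ideal). Remark~\ref{rem:heav} then furnishes a small $K' \succeq K$ with $I_{K'} \subseteq \pi \cdot I_K \subseteq \mm$, completing the containment. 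The main obstacle is really just the bookkeeping around the passage between $\vee$-definable and type-definable objects and invoking the dp-rank ingredients at the right moments; the substantive tools---the decomposition $\Kk = \Oo \cup \Oo^{-1}$, heaviness being preserved by definable bijections (Theorem~\ref{new-mult-1}), and the containment principle of Remark~\ref{rem:heav}---are already in place.
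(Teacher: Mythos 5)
Your proof is correct and follows essentially the same route as the paper's: both arguments reduce the claim to showing that each $\Oo_i$ is coarsened by the canonical valuation ring attached to the infinitesimals of a suitable small model, which forces dependence ($\Oo_1 \cdot \Oo_2 \subseteq \Oo_L \ne \Kk$). The only difference is local: the paper obtains $I_K \subseteq \mm_1$ in a single step by showing the maximal ideal $\mm_1$ itself has full dp-rank (take $x$ with $\dpr(x/aK) = \dpr(\Kk)$ for some $a$ of positive value and replace $x$ by $a x^{-1}$ if necessary) and then invoking Corollary~6.19 of \cite{prdf}, whereas you first establish $I_K \subseteq \Oo$ and then scale by $\pi \in \mm$ before re-applying the ``heavy groups contain infinitesimals'' principle; both versions work.
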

\begin{proof}
  The existence of non-trivial invariant valuation rings rules out the
  stable case, by Lemma~\ref{stable-valuation-rings}.  Let $K \preceq
  \Kk$ be a small submodel such that $\Oo_1$ and $\Oo_2$ are
  $\vee$-definable over $K$.  Let $R_K$ be the valuation ring whose
  maximal ideal is $I_K$.  Note that $R_K$ is $\vee$-definable over
  $K$ (it is $K$-invariant by construction, and it is $\vee$-definable
  because its maximal ideal $I_K$ is type-definable).  As
  non-independence is an equivalence relation on non-trivial valuation
  rings, it suffices to show that $\Oo_1$ and $R_K$ are
  non-independent.  Let $\mm_1$ be the maximal ideal of $\Oo_1$.  Then
  $\mm_1$ is type-definable over $K$.
  \begin{claim}
    $\dpr(\mm_1) = \dpr(\Kk)$.
  \end{claim}
  \begin{claimproof}
    As $\Oo_1$ is non-trivial, take non-zero $a$ with positive
    valuation.  Take $x \in \Kk$ with $\dpr(x/aK) = \dpr(\Kk)$.
    Replacing $x$ with $a \cdot x^{-1}$ if necessary, we may assume
    $x$ has positive valuation.  Then $x \in \mm_1$, and $\mm_1$ is
    type-definable over $aK$, so $\dpr(\mm_1) = \dpr(\Kk)$.
  \end{claimproof}
  For any $K$-definable $D \supseteq \mm_1$, we have $\dpr(D) =
  \dpr(\Kk)$, and so $D$ is heavy by
  Theorem~\ref{new-mult-1}.\ref{nm22} or \cite{prdf}, Lemma~7.1.  By
  Corollary~6.19 in \cite{prdf}, $I_{K} \subseteq \mm_1$.  It follows
  that
  \begin{equation*}
    R_K \supseteq \Oo_1,
  \end{equation*}
  so $R_{K}$ and $\Oo_1$ are non-independent.
\end{proof}

\begin{lemma}\label{comparability-finally}
  Let $\Kk$ be a dp-finite field.  Suppose that for every definable
  valuation ring $\Oo$ on $\Kk$ (possibly trivial), the residue field
  is stable or valuation type.  Then any two $\vee$-definable
  valuation rings $\Oo_1, \Oo_2$ on $\Kk$ are comparable.
\end{lemma}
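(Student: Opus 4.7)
The plan is to argue by contradiction, iterating Lemma~\ref{non-independence} via a residue-field reduction.

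First I would apply the hypothesis to the trivial valuation ring $\Oo=\Kk$, whose residue field is $\Kk$ itself: thus $\Kk$ is stable or of valuation type. If $\Kk$ is stable, Lemma~\ref{stable-valuation-rings} forbids any nontrivial small-invariant valuation ring, in particular any $\vee$-definable one, so $\Oo_1=\Oo_2=\Kk$ are trivially comparable. Hence I may assume $\Kk$ is of valuation type and, for contradiction, that $\Oo_1,\Oo_2$ are incomparable, so in particular both are proper.

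Next I would form the ring $\Oo_0$ generated inside $\Kk$ by $\Oo_1\cup\Oo_2$. As an overring of the valuation ring $\Oo_1$ it is itself a valuation ring on $\Kk$, and it is clearly $\vee$-definable over the same parameters as $\Oo_1$ and $\Oo_2$. Lemma~\ref{non-independence} yields $\Oo_1\cdot\Oo_2\ne\Kk$, so $\Oo_0\ne\Kk$. Pass to the residue field $k_0=\Oo_0/\mm_0$ and consider the specializations $\bar\Oo_i=\Oo_i/\mm_0$, which are $\vee$-definable valuation rings on $k_0$: they are nontrivial because $\Oo_i\subsetneq\Oo_0$ (otherwise one of $\Oo_1,\Oo_2$ would contain the other, contradicting incomparability), and they are independent in $k_0$ since the ring they generate is $\Oo_0/\mm_0=k_0$. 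Provided $k_0$ itself is stable or of valuation type, Lemma~\ref{non-independence} applied inside $k_0$ contradicts this independence, closing the argument.

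The main obstacle is therefore to secure that $k_0$ is stable or of valuation type, since the hypothesis is stated only for residue fields of \emph{definable} valuation rings on $\Kk$, while $\Oo_0$ is merely $\vee$-definable. My proposal is to sandwich $\Oo_0$ between two definable rings: find a proper definable valuation ring $\Oo^*$ with $\Oo_0\subseteq\Oo^*\subsetneq\Kk$, and rerun the previous paragraph with $\Oo^*$ in place of $\Oo_0$. The residue field $k^*=\Oo^*/\mm^*$ is then stable or of valuation type by hypothesis, and the images $\Oo_i/\mm^*$ remain nontrivial, $\vee$-definable, and incomparable on $k^*$. This naturally calls for an induction on $\dpr(\Kk)$: the exact sequence $0\to\mm^*\to\Oo^*\to k^*\to 0$ together with dp-rank additivity should give $\dpr(k^*)\le\dpr(\Oo^*)-1=\dpr(\Kk)-1$ because $\mm^*$ is infinite, and the residue-field hypothesis descends to $k^*$ because every definable valuation ring on $k^*$ lifts to a definable valuation ring on $\Kk$ contained in $\Oo^*$ with the same residue field. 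The hard part will be producing the definable coarsening $\Oo^*$: I would pick $c\in\Kk\setminus\Oo_0$, so that $c^{-1}\in\mm_0$, and attach to $c^{-1}$ the definable valuation ring cut out by the convex hull of $\val(c^{-1})$ inside a definable overring of $\Oo_1$; the $\vee$-definability of $\Oo_0$ together with saturation of $\Kk$ should ensure this ring is a proper definable coarsening of $\Oo_0$, which is the real heart of the argument.
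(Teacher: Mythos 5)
Your reduction to the residue field of the join is the right move, and you have correctly located the crux: the hypothesis speaks only of \emph{definable} valuation rings, while the join $\Oo_0 = \Oo_1 \cdot \Oo_2$ is a priori only $\vee$-definable. But the resolution is not to hunt for a definable coarsening strictly above $\Oo_0$ --- it is that $\Oo_0$ \emph{itself is definable}. This is Remark~5.9 of \cite{arxiv-myself}, which the paper's proof invokes: the join of two \emph{incomparable} $\vee$-definable valuation rings is definable (roughly, it is simultaneously $\vee$-definable and type-definable, hence definable by compactness; incomparability is essential here). With that fact in hand the hypothesis applies directly to $\Oo_0$, its residue field $k_0$ is stable or of valuation type, the images of $\Oo_1,\Oo_2$ in $k_0$ are non-trivial, $\vee$-definable and independent, and Lemma~\ref{non-independence} finishes the argument exactly as in your second paragraph. (The case $\Oo_0=\Kk$ that you treat separately is also covered uniformly: then $k_0=\Kk$ and one uses the hypothesis for the trivial valuation.)

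The workaround you propose in its place does not close the gap. First, if $\Oo^*\supsetneq\Oo_0$, the images $\Oo_i/\mm^*$ in $k^*$ are no longer independent --- their join is $\Oo_0/\mm^*$, a \emph{proper} valuation ring of $k^*$ --- so Lemma~\ref{non-independence} gives no contradiction there, and you are indeed forced into an induction. Second, that induction is unsound: dp-rank is subadditive, not additive, and the residue field of a definable valuation on a dp-finite field can have the \emph{same} dp-rank as the field (for instance $\Rr((t))$ is dp-minimal with dp-minimal residue field $\Rr$), so $\dpr(k^*)\le\dpr(\Kk)-1$ fails in general and the induction does not terminate. Third, the construction of $\Oo^*$ is itself problematic: the coarsening of $\Oo_1$ determined by the convex hull of $\val(c^{-1})$ can be all of $\Kk$ (e.g.\ when the relevant quotient of the value group is archimedean), and there is no reason for it to be definable rather than merely $\vee$-definable. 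All three problems disappear once one knows the join is already definable.
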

\begin{proof}
  Suppose $\Oo_1, \Oo_2$ are incomparable.  By (easy) Remark 5.9 in
  \cite{arxiv-myself}, the join $\Oo_1 \cdot \Oo_2$ is a
  \emph{definable} valuation ring on $\Kk$.  If $\Kk'$ denotes the
  residue field of $\Oo_1 \cdot \Oo_2$, then
  \begin{itemize}
  \item $\Kk'$ is a dp-finite field, either valuation type or stable.
  \item $\Oo_1$ and $\Oo_2$ induce two independent non-trivial
    $\vee$-definable valuation rings on $\Kk'$.
  \end{itemize}
  This contradicts Lemma~\ref{non-independence}.
\end{proof}
The statement of Lemma~\ref{comparability-finally} is slightly
imprecise, because there is no canonical structure on the residue
field.  We should really say that the residue field is stable or
valuation-type with respect to \emph{any} amount of small induced
structure from $\Kk$.
\begin{lemma} \label{henselianity-finally}
  Suppose that $\Kk$ is a dp-finite field and for every definable
  valuation ring $\Oo$ on $\Kk$, the residue field is valuation type
  or stable.  Then any $\vee$-definable valuation ring $\Oo$ on $\Kk$
  is henselian.
\end{lemma}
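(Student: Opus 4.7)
The plan is to exploit the classical characterization of henselianity: a valuation ring $\Oo$ on a field $\Kk$ is henselian iff it extends uniquely to every finite Galois extension $L/\Kk$. Suppose for contradiction that the $\vee$-definable valuation ring $\Oo$ is not henselian, and fix a finite Galois extension $L/\Kk$ to which $\Oo$ has distinct extensions $\Oo_1, \ldots, \Oo_n$ with $n \ge 2$. (This uses that $\Kk$ is perfect, as all dp-finite fields are, so any finite extension is separable.) View $L$ as a structure with a predicate for $\Kk$: this makes $L$ dp-finite, being bi-interpretable with $\Kk^d$ for $d = [L:\Kk]$. The goal is to reach a contradiction by invoking Lemma~\ref{comparability-finally} inside $L$.

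The first substantive step is to transfer the residue-field hypothesis from $\Kk$ to $L$. Given a definable valuation ring $\Oo_L$ on $L$, its restriction $\Oo_L \cap \Kk$ is a definable valuation ring on $\Kk$, whose residue field $k_\Kk$ is therefore stable or valuation type by hypothesis. The residue field $k_L$ of $\Oo_L$ is a finite extension of $k_\Kk$, equipped with a predicate for $k_\Kk$ descended from the $\Kk$-predicate on $L$. In the stable case, Corollary~\ref{cor:stable-finite-rank} together with the classification of stable fields of finite Morley rank forces $k_\Kk$ to be algebraically closed or finite, and in either case $k_L$ remains stable. In the valuation-type case, Theorem~\ref{thm:finite-extensions} gives that $k_L$ is valuation type.

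Having transferred the hypothesis, I turn to the extensions themselves. The integral closure $\bar\Oo$ of $\Oo$ in $L$ is $\vee$-definable in $L$, via the existential formula asserting that an element satisfies a monic polynomial of degree at most $d$ with coefficients in $\Oo$. Standard Galois and valuation theory yields $\bar\Oo = \bigcap_{i=1}^n \Oo_i$, and Corollary~\ref{decomposition} identifies the $\Oo_i$ with the localizations of $\bar\Oo$ at its distinct maximal ideals; in particular, they are pairwise incomparable. Each maximal ideal of $\bar\Oo$ can be pinned down over $L$ by the residue of a primitive element for $L/\Kk$, so each $\Oo_i$ is $\vee$-definable over a small parameter set. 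Applying Lemma~\ref{comparability-finally} inside $L$ now forces $\Oo_1$ and $\Oo_2$ to be comparable, contradicting their incomparability.

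The main obstacle I foresee is the hypothesis transfer at the level of residue fields: one must verify that the extra induced structure on $k_L$, inherited from $L$'s predicate for $\Kk$, does not destroy stability or valuation type. This is smooth on the valuation-type side thanks to Theorem~\ref{thm:finite-extensions}, and on the stable side thanks to the structural rigidity of stable dp-finite fields (they are essentially algebraically closed), leaving little room for the induced structure to misbehave. A secondary technicality is witnessing that individual $\Oo_i$ (not just their intersection $\bar\Oo$) are $\vee$-definable over a small set, which is handled by the finite Galois theory of $L/\Kk$.
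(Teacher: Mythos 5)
Your proof is correct and takes essentially the same route as the paper's: pass to a finite normal extension, transfer the residue-field hypothesis there (via Theorem~\ref{thm:finite-extensions} in the valuation-type case and stability of interpretable structures in the stable case), note that the distinct extensions of $\Oo$ are pairwise incomparable and $\vee$-definable, and contradict Lemma~\ref{comparability-finally}. The only cosmetic difference is that the paper cites Lemma~5.4 of \cite{arxiv-myself} for the $\vee$-definability of the individual extensions, where you sketch a direct argument through the integral closure and its localizations.
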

\begin{proof}
  The assumption on $\Kk$ passes to finite extensions, because if
  $L/\Kk$ is finite and $\Oo$ is a definable valuation ring on $L$,
  then the residue field of $\Oo$ is a finite extension of the residue
  field of $\Oo \cap \Kk$.  So any two $\vee$-definable valuation
  rings on a finite extension of $\Kk$ must be comparable.  Now if
  $\Oo$ is a $\vee$-definable non-henselian valuation ring on $\Kk$,
  there is a finite extension $\Kk'/\Kk$ such that $\Oo$ has multiple
  extensions to $\Kk'$.  We may assume $\Kk'/\Kk$ is normal.  Let
  $\Oo_1, \Oo_2$ be two such extensions.  On general grounds, they
  must be incomparable.\footnote{The finite group $\Aut(\Kk'/\Kk)$
    acts transitively on the poset of extensions of $\Oo$ to $\Kk'$,
    so this poset must be totally incomparable.}  By Lemma 5.4 in
  \cite{arxiv-myself}, both $\Oo_1$ and $\Oo_2$ are $\vee$-definable.
  This contradicts Lemma~\ref{comparability-finally}.
\end{proof}

\begin{theorem} \label{thm:concluding-dichotomy}
  Suppose that $\Kk$ is a sufficiently saturated dp-finite field.
  Suppose that for every definable valuation ring $\Oo$ on $\Kk$, the
  residue field is valuation type or stable.  Then exactly one of the
  following happens:
  \begin{enumerate}
  \item $\Kk$ is stable and finite.
  \item $\Kk$ is stable and algebraically closed.
  \item $\Kk$ is unstable and admits a $K$-invariant non-trivial
    henselian valuation ring for some small $K \preceq \Kk$.
  \end{enumerate}
\end{theorem}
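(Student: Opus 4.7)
The plan is to split on the stability of $\Kk$. In the stable case, I would invoke Corollary~\ref{cor:stable-finite-rank} to conclude that $\Kk$ has finite Morley rank, and then apply Macintyre's classical theorem that an infinite $\omega$-stable field is algebraically closed. This yields cases (1) or (2) of the trichotomy.

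In the unstable case, the starting move is to apply the hypothesis to the trivial valuation ring $\Oo = \Kk$, whose residue field is $\Kk$ itself. This forces $\Kk$ to have valuation type, since instability rules out stability. Now fix any small $K \preceq \Kk$. By definition of valuation type, $I_K$ is the maximal ideal of a valuation ring $\Oo_K$ on $\Kk$. The equivalence
\[ x \in \Oo_K \iff x = 0 \text{ or } x^{-1} \notin I_K, \]
together with the type-definability of $I_K$ over $K$, shows that $\Oo_K$ is $\vee$-definable over $K$, and hence $K$-invariant. It is non-trivial because $I_K \ne \{0\}$ on any unstable dp-finite field (the canonical topology is non-discrete). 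Finally, Lemma~\ref{henselianity-finally} applies directly to $\Oo_K$ and yields that $\Oo_K$ is henselian, giving case (3).

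Mutual exclusivity is immediate: cases (1) and (2) require stability while case (3) requires instability, and (1) and (2) are disjoint because algebraically closed fields are infinite. The only real subtlety I see is the reading of the hypothesis: to apply it to $\Oo = \Kk$ one must allow the trivial valuation ring, as is done explicitly in Lemma~\ref{comparability-finally}. The genuinely hard work---upgrading bare existence of an invariant valuation ring to henselianity---has already been done by Lemma~\ref{henselianity-finally}, so this theorem is essentially a matter of assembling previous pieces once the correct $\Oo_K$ is identified.
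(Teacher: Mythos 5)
Your proposal is correct and follows essentially the same route as the paper's proof: stable case via Corollary~\ref{cor:stable-finite-rank} (plus Macintyre), unstable case via the trivial valuation to get valuation type, then $\Oo_K$ with maximal ideal $I_K$ is $\vee$-definable and non-trivial, and Lemma~\ref{henselianity-finally} gives henselianity. The paper's proof is just a terser version of the same assembly.
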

\begin{proof}
  If $\Kk$ is stable, then $\Kk$ has finite Morley rank by
  Corollary~\ref{cor:stable-finite-rank}.  If $\Kk$ is unstable, then
  by assumption $\Kk$ has valuation type (consider the trivial
  valuation).  Take a small model $K \preceq \Kk$.  Then $I_K$ is the
  maximal ideal of a non-trivial valuation ring $\Oo_K$ that is
  $\vee$-definable over $K$.  By Lemma~\ref{henselianity-finally}, it
  follows that $\Oo_K$ is henselian.
\end{proof}

\section{The multi-valuation strategy for classification}\label{retreat}
It is natural to make the following conjecture\footnote{Spoiler alert:
  this conjecture is probably false.  See
  \S\ref{spanner-in-the-works}.}:
\begin{conjecture}[valuation-type conjecture]\label{conj:vt}
  If $\Kk$ is a dp-finite field, then $\Kk$ is stable or has valuation
  type.
\end{conjecture}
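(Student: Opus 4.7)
The approach is to verify condition~(\ref{n3}) of Theorem~\ref{thm:val-multival-type}: produce a small submodel $K \preceq \Kk$ such that $I_K$ contains a nonzero $R$-submodule of $\Kk$ for some multi-valuation ring $R \subseteq \Kk$. Since the stable case is immediate, assume $\Kk$ is unstable so that $I_K$ is a non-trivial subgroup of $(\Kk,+)$ with all the algebraic properties collected in Proposition~\ref{prop:prop}.

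The natural candidate for $R$ is the stabilizer
\[ R := \{ x \in \Kk : x \cdot I_K \subseteq I_K \}, \]
which is a subring of $\Kk$ containing $K$ (as $K \cdot I_K \subseteq I_K$ by Remark~6.9.3 of \cite{prdf}, the same fact used in Corollary~\ref{cor:inf-enclosed}). In particular $\Frac(R) = \Kk$, and tautologically $I_K$ is an $R$-submodule of $\Kk$. If $R$ is a multi-valuation ring then Proposition~\ref{prop:finally-a-valuation} immediately upgrades $I_K$ to the maximal ideal of a valuation ring, and $\Kk$ has valuation type.

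By Proposition~\ref{alt-characterization}, it suffices to show that $R$ is a Bezout domain with finitely many maximal ideals. For the finiteness of the maximal spectrum, the plan is to exploit the reduced rank $r = \redrk(\Lambda_{K_0})$ for a magic subfield $K_0 \preceq K$, which is finite by Proposition~10.1.7 of \cite{prdf}. Fix a non-zero $K_0$-pedestal $J$; by Proposition~10.4.3 of \cite{prdf} one has $I_K \subseteq J$, and the hope is that the strict $r$-cube in $\Lambda_{K_0}$ based at $J$ gives at most $r$ ``independent directions'' along which $R$-submodules of $\Kk$ containing $I_K$ can grow, bounding the number of maximal ideals of $R$ by $r$. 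For the Bezout property, one then wants to argue that within each such direction the $R$-submodules form a chain, so that $R$ localises at each maximal ideal to a genuine valuation ring on $\Kk$.

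The hard part---and presumably the source of the pessimism voiced in \S\ref{retreat}---is the Bezout property. For dp-minimal fields one has $\redrk(\Lambda_{K_0}) = 1$, the relevant sublattice of $\Lambda_{K_0}$ is essentially a chain modulo its base, and a single valuation ring falls out on general grounds. At higher reduced rank, however, $\Lambda_{K_0}$ need not decompose as a direct sum of chains in any useful sense, and there is no obvious algebraic mechanism forcing the stabilizer $R$ to satisfy the divisibility dichotomy required of a Bezout domain. A priori $R$ could be a more general Pr\"ufer-type or even non-integrally-closed ring, in which case finitely generated $R$-ideals need not be principal and Proposition~\ref{alt-characterization} does not apply. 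Making the cube structure in $\Lambda_{K_0}$ interact correctly with the ring structure on $R$---or finding a different multi-valuation ring when this fails---is the genuine obstruction.
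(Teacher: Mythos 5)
This statement is a \emph{conjecture} in the paper, not a theorem: the paper offers no proof of it, and in \S\ref{spanner-in-the-works} explicitly argues that it is \textbf{probably false}, proposing Example~\ref{dcvf} (a model of DCVF$_{0,0}$ with a predicate for $\{x : \val(x)\ge 0 \text{ and } \val(\partial x)\ge 0\}$) as a likely counterexample. Your proposal is therefore not, and could not be, a correct proof. To your credit, you locate the difficulty in exactly the right place: the candidate ring you write down, the stabilizer $R = \{x \in \Kk : x\cdot I_K \subseteq I_K\}$, is precisely the ``stabilizer ring'' $R_J$ of Conjecture~\ref{conj:vt2} (with $J = I_K$), and the whole question is whether such a stabilizer is a multi-valuation ring. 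But the step you defer --- establishing the Bezout property and the finiteness of the maximal spectrum --- is not merely ``the hard part''; the paper's analysis of rank-2 characteristic-0 fields shows that the stabilizer ring of a pedestal can be a ring of the form $\{x : \val(x)\ge 0,\ \val(\partial x)\ge 0\}$ for a derivation $\partial$, which is not a multi-valuation ring at all. So the hoped-for mechanism (the strict $r$-cube forcing $R$ to decompose into finitely many chain-like directions) is expected to fail, and no choice of a ``different multi-valuation ring when this fails'' is available in general.

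Two smaller points. First, your assertion that $\Frac(R) = \Kk$ does not follow merely from $K \subseteq R$; one needs an argument that every element of $\Kk$ is a ratio of two stabilizing elements (this is the content of results like Proposition~10.15 of \cite{prdf}, not a triviality). Second, the correct takeaway from \S\ref{retreat} is not that the proof is unfinished but that the target class of rings must be enlarged: the paper suggests replacing multi-valuation rings by a class of ``meta-valuation rings'' containing the derivation-twisted example, and proving a modified conjecture. As written, your proposal establishes nothing beyond the reductions already contained in Theorem~\ref{thm:val-multival-type} and Proposition~\ref{prop:finally-a-valuation}.
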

By Theorem~\ref{thm:concluding-dichotomy}, this would imply the Shelah
and henselianity conjectures for dp-finite fields (see \S\ref{sec:1}),
which in turn yield a full classification of dp-finite fields
(\cite{halevi-hasson-jahnke}).

By Theorem~\ref{thm:val-multival-type} and
Corollary~\ref{cor:inf-enclosed}, Conjecture~\ref{conj:vt} is
equivalent to
\begin{conjecture}\label{conj:vt2}
  Let $\Kk$ be a saturated unstable dp-finite field.  Then there is a
  non-zero bounded subgroup $J \subseteq (\Kk,+)$ for which the
  ``stabilizer ring''
  \begin{equation*}
    R_J = \{x \in \Kk : xJ \subseteq J\}
  \end{equation*}
  is a multi-valuation ring on $\Kk$.
\end{conjecture}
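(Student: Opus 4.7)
\textbf{Proof plan for Conjecture~\ref{conj:vt2}.} Fix a magic subfield $K_0 \preceq \Kk$ and work in the lattice $\Lambda_{K_0}$ of type-definable $K_0$-linear subspaces of $\Kk$. The candidates for $J$ that the paper makes available are the infinitesimal groups $I_K$ for small $K \succeq K_0$, the $K_0$-pedestals, and group-theoretic combinations thereof. The plan is to survey these candidates and, for each, to analyze when the stabilizer ring $R_J$ satisfies the three conditions of Proposition~\ref{alt-characterization} (namely $\Frac(R_J) = \Kk$, Bezout, and finitely many maximal ideals).

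The first candidate to try is $J = I_K$ itself. The algebraic properties from Proposition~\ref{prop:prop} --- in particular $I_K \cdot I_K = I_K$ and the fact that $1 + I_K$ is a multiplicative subgroup --- make the stabilizer $R_{I_K}$ unusually rigid: by Proposition~\ref{hostile-1}, if $R_{I_K}$ contains \emph{any} multi-valuation ring, then $R_{I_K}$ already is one. So the task reduces to producing a multi-valuation ring $R_0$ with $R_0 \cdot I_K \subseteq I_K$, i.e.\ to exhibiting a nonzero $R_0$-submodule inside $I_K$. But this is exactly condition~(\ref{n3}) of Theorem~\ref{thm:val-multival-type}, and hence equivalent to the conclusion we are trying to prove; so this route is circular.

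To break the circularity I would turn to $J = J_0$ a $K_0$-pedestal, which exists and is bounded by Proposition~10.4.1 of \cite{prdf} and Remark~\ref{rem:pedestals-enclosed}. For any small $K \succeq K_0$ type-defining $J_0$, Proposition~10.4.3 of \cite{prdf} gives $I_K \cdot J_0 \subseteq I_K \subseteq J_0$, so $R_{J_0}$ contains both $K_0$ and the nonzero additive group $I_K$. I would then try to verify the three Bezout-domain conditions for $R_{J_0}$: for $\Frac(R_{J_0}) = \Kk$, I would exploit boundedness, noting that bounded subgroups $J_0$ and $\alpha J_0$ are commensurable for every $\alpha \in \Kk^\times$ and hoping to extract from this witnesses $\alpha = a/b$ with $a,b \in R_{J_0}$; for finiteness of the set of maximal ideals, I would try to leverage the finite reduced rank of $\Lambda_{K_0}$ to show that only finitely many pairwise-incomparable valuation rings can contain $R_{J_0}$, in the spirit of Lemma~\ref{non-independence}; if these succeed, the Bezout property would be the remaining thing to establish, via a case analysis on finitely generated ideals using the structure of pedestals under Proposition~10.4.5 of \cite{prdf}.

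The main obstacle will be the Bezout condition (equivalently, controlling the full poset of $R_{J_0}$-submodules of $\Kk$). Finite reduced rank of $\Lambda_{K_0}$ constrains $K_0$-linear subspaces, but $R_{J_0}$-submodules need not be $K_0$-linear in any useful way, and there is no evident reason why finitely generated ideals of $R_{J_0}$ should be principal. As the author flags in \S\ref{spanner-in-the-works}, Conjecture~\ref{conj:vt} is probably false, which via Theorem~\ref{thm:val-multival-type} means \emph{no} choice of $J$ will succeed; the honest expectation is therefore that this plan produces not a proof but a concrete description of $R_{J_0}$ for a pedestal $J_0$ that pinpoints exactly how the multi-valuation property fails, which is the natural first step toward a counterexample and is where I expect the real work to lie.
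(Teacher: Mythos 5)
You are attempting to prove a statement that the paper explicitly labels a conjecture and does not prove; on the contrary, \S\ref{spanner-in-the-works} argues that Conjectures~\ref{conj:vt}--\ref{conj:vt2} are \emph{probably false}, with the ring $R = \{x : \val(x) \ge 0 \text{ and } \val(\partial x) \ge 0\}$ in a model of DCVF$_{0,0}$ (Example~\ref{dcvf}) as the expected counterexample. So there is no proof in the paper to compare against, and your own closing paragraph correctly reaches the same verdict. Your diagnosis of the two routes is also accurate: the $I_K$ route is genuinely circular, since by Theorem~\ref{thm:val-multival-type} (equivalence of (\ref{n2}) and (\ref{n3})) the hypothesis you would need for Proposition~\ref{hostile-1} is already equivalent to the conjecture; and the pedestal route is essentially what was attempted in \cite{prdf}, \S 10.2--10.3 (Proposition 10.15, Lemma 10.21), which the present paper reports ``failed to terminate,'' yielding only the filtered-union ring $R^\infty$ of Theorem 10.25 rather than a stabilizer ring of a single pedestal.

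Two smaller technical remarks on the pedestal branch of your plan. First, $\Frac(R_{J_0}) = \Kk$ does hold, but the cleanest argument is not via ``commensurability'' of $J_0$ and $\alpha J_0$ (boundedness gives one-sided scaling containments, not commensurability): rather, take $K \supseteq K_0$ type-defining $J_0$ with $\alpha \in K$; then $I_K \subseteq R_{J_0}$ by Proposition 10.4.3 of \cite{prdf}, and $K \cdot I_K \subseteq I_K$ gives $\alpha = (\alpha\epsilon)/\epsilon$ with both $\alpha\epsilon$ and $\epsilon$ in $R_{J_0}$. Second, you have correctly located the genuine failure point at the Bezout condition: the author's rank-$2$ analysis shows that for some pedestals $J$ the ring $R_J$ is the derivation-twisted ring above, which is not Bezout and not a finite intersection of valuation rings, so no amount of lattice-theoretic bookkeeping on $\Lambda_{K_0}$ will close that case. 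Your proposal should therefore be read as a correct account of why the conjecture resists proof, not as a proof.
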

Thus a natural strategy to classify dp-finite fields is to prove
Conjecture~\ref{conj:vt2}.  We call this \emph{the multi-valuation
  strategy}.
\begin{remark}
  Since the classification is already known for dp-finite fields of
  positive characteristic \cite{prdf}, we can restrict attention to
  fields of characteristic 0 in the above conjectures.
\end{remark}

\subsection{The dp-minimal case}
To motivate the multi-valuation strategy, we argue that it naturally
generalizes the strategy used in \cite{arxiv-myself} to classify
dp-minimal fields.  The strategy there can be outlined as follows:
\begin{enumerate}
\item \label{ftw1} Define a type-definable group $I_K$ of
  $K$-infinitesimals. (\cite{arxiv-myself}, \S 3)
\item \label{ftw2} Show that $1 + I_K$ is the set of multiplicative
  $K$-infinitesimals. (\cite{arxiv-myself}, proof of Proposition 4.13)
\item Use lattice-theoretic techniques to show that $I_K$ is an ideal
  in a valuation ring $\Oo_K$ on $\Kk$ (\cite{arxiv-myself} Theorem
  4.16, especially Claim 4.17).
\item Partially extend \ref{ftw1}, \ref{ftw2} to finite field
  extensions.  (\cite{arxiv-myself}, \S 5.1)
\item Consider the extensions of $\Oo_K$ to some finite field
  extension $\Ll/\Kk$.  Use the surjectivity of the squaring map $1 +
  I_L \to 1 + I_L$ to argue that $\Oo_K$ has a unique extension to
  $\Ll$.  (\cite{arxiv-myself}, proof of Proposition 5.6)
\item Conclude that $\Oo_K$ is henselian, and the Shelah conjecture
  holds.
\end{enumerate}
For dp-finite fields, the analogous steps are:
\begin{enumerate}
\item \label{ft1} Define a type-definable group $I_K$ of
  $K$-infinitesimals (done in \cite{prdf}).
\item \label{ft2} Show that $1 + I_K$ is the multiplicative
  $K$-infinitesimals (Theorem~\ref{new-mult-2}).
\item Somehow show that $I_K$ is an ideal in a multi-valuation ring
  $R_K$ on $\Kk$ ($\approx$ Conjecture~\ref{conj:vt2}).
\item Extend \ref{ft1}, \ref{ft2} to finite extensions of $K$.
  (Trivial, or \S\ref{sec:9.1}.)
\item Consider the extensions of $R_K$ to some finite extension
  $\Ll/\Kk$.  Use the surjectivity of the squaring map $1 + I_L \to 1
  + I_L$ to argue that $R_K$ is a valuation ring with a unique
  extension to $\Ll$.  (See the arguments of \S\ref{sec:toocool} and
  \S\ref{sec:bdd}-\ref{sec:feh}.)
\item Conclude that $R_K$ is a henselian valuation ring, and the
  Shelah conjecture holds.
\end{enumerate}

\subsection{Relation to \emph{Dp-finite fields I}}
The multi-valuation strategy was the secret motivation for much of
\cite{prdf}, \S 10.
\begin{itemize}
\item Section 10.1 of \cite{prdf} was concerned with $K_0$-pedestals,
  which are a special type of bounded group by
  Remark~\ref{rem:pedestals-enclosed}.
\item Section 10.2, specifically Proposition 10.15, developed
  techniques to study the stabilizer ring $R_J$ appearing in
  Conjecture~\ref{conj:vt2}.
\item Section 10.3, specifically Lemma 10.21, showed how to replace
  $J$ with a new pedestal $J'$ such that $R_{J'}$ is closer to being a
  multi-valuation ring.
\item This technique failed to terminate, but in Theorem 10.25 we
  obtained a multi-valuation ring $R^\infty$ as a filtered union of
  $R_J$'s.
\end{itemize}

\subsection{The problem} \label{spanner-in-the-works}
Unfortunately, Conjectures~\ref{conj:vt}-\ref{conj:vt2} \textbf{are
  probably false}.  I have carried out a detailed analysis of fields
of dp-rank 2 and characteristic 0, which will appear in subsequent
papers.  This analysis shows that if $\Kk$ has rank 2 and
characteristic 0, then one can find a pedestal $J$ for which the ring
$R_J$ is either:
\begin{itemize}
\item A valuation ring on $\Kk$.
\item An intersection of two valuation rings on $\Kk$.
\item A ring that is (essentially) of the form
  \begin{equation*}
    \{x \in \Kk : \val(x) \ge 0 \text{ and } \val(\partial x) \ge 0\}
  \end{equation*}
  for some valuation and derivation on $\Kk$.
\end{itemize}
I was unable to rule out the third case, which suggests the following
potential counterexample to Conjectures~\ref{conj:vt},\ref{conj:vt2}:
\begin{example}\label{dcvf}
  Let DCVF$_{0,0}$ denote the model companion of ACVF$_{0,0}$ expanded
  with a derivation.  (No compatibility between the derivation and
  valuation is assumed.)  Let $(K,v,\partial)$ be a model of
  DCVF$_{0,0}$.  Let $R$ be the following subring of $K$:
  \begin{equation*}
    R = \{x \in K : \val(x) \ge 0 \text{ and } \val(\partial x) \ge 0\}.
  \end{equation*}
  Consider the field $(K,+,\cdot,R)$ expanded by a unary predicate for
  $R$.  I suspect that this structure has rank 2, in which case it
  contradicts Conjectures~\ref{conj:vt}-\ref{conj:vt2}.
\end{example}
Nevertheless, two of the three cases in the analysis have valuation
type, and the results of the present paper apply.  Using variants of
\S\ref{sec:val-type}-\ref{sec:feh}, one can handle the third case, and
prove:
\begin{theorem}[to appear] \label{the-dream}
  Let $K = (K,+,\cdot,0,1,\ldots)$ be an expansion of a field.
  Suppose $K$ has dp-rank 2, characteristic 0, and is unstable.
  \begin{enumerate}
  \item Any two definable valuation rings on $K$ are comparable
  \item $K$ admits a definable V-topology
  \item The canonical topology on $K$ is definable.
  \end{enumerate}
\end{theorem}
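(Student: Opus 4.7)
The plan is to invoke the three-case classification of pedestal stabilizer rings announced in the discussion just before this theorem. In rank 2 and characteristic 0, suitable refinements of the lattice-theoretic machinery of \cite{prdf} \S 10 should yield a nonzero $K_0$-pedestal $J \subseteq \Kk$ whose stabilizer ring $R_J = \{x \in \Kk : xJ \subseteq J\}$ falls into one of three cases: (a) a single valuation ring on $\Kk$, (b) an intersection of two valuation rings on $\Kk$, or (c) a differential-valuation ring of the form $\{x : v(x) \ge 0,\ v(\partial x) \ge 0\}$ for a valuation $v$ and derivation $\partial$ on $\Kk$. The proof then splits along these three cases, two of which are already available via \S\ref{sec:val-type}--\S\ref{sec:feh}.

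In cases (a) and (b), $R_J$ is a multi-valuation ring and $J$ is bounded by Remark \ref{rem:pedestals-enclosed}, so Lemma \ref{lem:neo} gives that $\Kk$ has valuation type. Parts (2) and (3) then follow from Lemma \ref{lem:v-top-uniformity}: it produces a uniformly $0$-definable family $\{H_b\}_{b \in Y}$ of heavy sets such that $\{(H_b - H_b)(K)\}_{b \in Y(K)}$ is a neighborhood basis of $0$ in the canonical topology, which is a V-topology by assumption. Part (1), comparability of definable valuation rings, follows by a mild adaptation of Lemma \ref{comparability-finally}: if $\Oo_1, \Oo_2$ were incomparable definable valuation rings, then the join $\Oo_1 \cdot \Oo_2$ would be a definable valuation ring on $\Kk$ whose residue field would again be rank 2 and unstable, hence of valuation type, so Lemma \ref{non-independence} would force $\Oo_1$ and $\Oo_2$ to be non-independent on that residue field, contradicting the assumption that they are incomparable.

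Case (c) is the main obstacle and calls for genuinely new work. Here $R_J$ is not a multi-valuation ring, so Proposition \ref{hostile-1} does not apply, and neither does the Bezout or cut-description machinery of Proposition \ref{multi-valuation-notes} that underpins the multi-valuation treatment. The plan is to prove a variant of Proposition \ref{hostile-1} adapted to differential-valuation rings, using the algebraic constraints of Proposition \ref{prop:prop} --- idempotence $I_K \cdot I_K = I_K$, the subgroup property of $1 + I_K$, and surjectivity of the squaring map on $1 + I_K$ in characteristic 0 --- to argue that the differential constraint $v(\partial x) \ge 0$ must be redundant on a nonzero $R_J$-submodule with these properties. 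The expected conclusion is that such a submodule is already an ideal of the valuation ring of $v$ alone, reducing case (c) to case (a). The hard part will be this reduction: in the absence of Bezout structure one cannot decompose $R_J$-submodules via cuts in a single value group, so the argument must proceed directly from the multiplicative constraints, presumably via a Lemma \ref{5-way}-style combinatorial analysis on the cut in the value group of $v$ cut out by $I_K$, combined with an analysis of how $\partial$ interacts with the idempotent submodule $I_K \cdot I_K$. Once case (c) is collapsed into case (a), parts (1)--(3) conclude uniformly as above.
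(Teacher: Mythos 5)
The paper contains no proof of this theorem --- it is labeled ``to appear'' and is deferred to a subsequent paper --- so the only question is whether your outline would work. It would not, for two reasons, both centered on case (c). First, you explicitly leave the entire content of case (c) open (``the hard part will be this reduction''), so this is a plan rather than a proof. Second, and more seriously, the reduction you propose --- that the differential constraint $v(\partial x) \ge 0$ must be redundant on $I_K$, so that case (c) collapses into case (a) and $\Kk$ has valuation type --- runs directly against the surrounding discussion in \S\ref{spanner-in-the-works}: Example~\ref{dcvf} is precisely a suspected rank-2, characteristic-0 structure in which case (c) occurs and the canonical topology is \emph{not} a V-topology, which is why Conjecture~\ref{conj:vt} is described as ``probably false.'' This is also why the theorem is phrased as it is: part (2) asserts only that $K$ \emph{admits} a definable V-topology (in case (c), the one induced by $v$ alone), while part (3) separately asserts that the canonical topology (in case (c), the finer differential-valuation topology) is definable. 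The theorem deliberately does not claim these coincide, and a correct treatment of case (c) must keep them distinct rather than prove they are equal. An argument via Proposition~\ref{hostile-1}-style idempotence and surjectivity of squaring cannot be expected to kill the derivation, since those properties plausibly hold in the DCVF example.

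There is a secondary gap in your part (1): you assert that the residue field of the join $\Oo_1 \cdot \Oo_2$ is ``again rank 2 and unstable, hence of valuation type.'' That ``hence'' is exactly the implication that case (c) threatens to refute, so as written the comparability argument is circular in the problematic case. To repair it one must either show that the residue field of a nontrivial definable valuation on a rank-2 field has dp-rank at most 1 (reducing to the dp-minimal classification, where valuation type is known), or else rerun the three-case analysis on the residue field and handle a case-(c) residue field by a suitable strengthening of Lemma~\ref{non-independence} that does not presuppose valuation type.
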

This doesn't yet classify dp-rank 2 fields.  However, if
Theorem~\ref{the-dream} could be generalized to higher rank, it would
imply the henselianity and Shelah conjectures, completing the
classification of all dp-finite fields.

All this suggests that we should change
Conjectures~\ref{conj:vt}-\ref{conj:vt2}, replacing valuation rings
and multi-valuation rings with some larger class of ``meta-valuation
rings'' containing the ring $R$ of Example~\ref{dcvf}.  Then the
``meta-valuation strategy'' might succeed where the multi-valuation
strategy fails.

\begin{remark}
  One could distinguish two versions of Valuation
  Conjecture~\ref{conj:vt}:
  \begin{enumerate}
  \item \label{v1} The valuation conjecture for pure fields $(K,+,\cdot,0,1)$.
  \item \label{v2} The valuation conjecture for expansions of fields
    $(K,+,\cdot,0,1,\ldots)$.
  \end{enumerate}
  Version \ref{v1} should be true, given the expected classification of
  dp-finite fields.  On the other hand, version \ref{v2} is probably
  false because of the potential counterexample.

  Assuming the counterexample exists, any proof of the valuation
  conjecture for pure fields (version \ref{v1}) would need to use the
  purity assumption in an essential way.  But the purity assumption is
  very hard to use in proofs, unless the full classification is
  already known.
  
  Thus version \ref{v2} is probably false, and version \ref{v1} is
  probably unprovable without the classification in hand.
\end{remark}

\begin{acknowledgment}
The author would like to thank
\begin{itemize}
\item Martin Hils and Franziska Jahnke for the invitation to speak at
  the conference ``Model Theory of Valued Fields and Applications.''
  This paper began as supplementary notes for the talk.
\item Jan Dobrowolski, for some helpful discussions about groups of
  finite burden.
\item Meng Chen, for hosting the author at Fudan University, where
  this research was carried out.
\item Peter Sinclair and Silvain Rideau for information on DCVF.
\end{itemize}
{\tiny This material is based upon work supported by the National Science
Foundation under Award No. DMS-1803120.  Any opinions, findings, and
conclusions or recommendations expressed in this material are those of
the author and do not necessarily reflect the views of the National
Science Foundation.}
\end{acknowledgment}

\bibliographystyle{plain} \bibliography{mybib}{}

\begin{thebibliography}{1}

\bibitem{halevi-hasson-jahnke}
Yatir Halevi, Assaf Hasson, and Franziska Jahnke.
\newblock A conjectural classification of strongly dependent fields.
\newblock {arXiv:1805.03814v2 [math.LO]}, 2018.

\bibitem{hhj-v-top}
Yatir Halevi, Assaf Hasson, and Franziska Jahnke.
\newblock Definable {V}-topologies, henselianity and {NIP}.
\newblock {arXiv:1901.05920v2 [math.LO]}, 2019.

\bibitem{Palacin}
Yatir Halevi and Daniel Palac{\'i}n.
\newblock The dp-rank of abelian groups.
\newblock {\em Journal of Symbolic Logic}, 84:957--986, September 2019.

\bibitem{JK}
Franziska Jahnke and Jochen Koenigsmann.
\newblock Uniformly defining p-henselian valuations.
\newblock {\em Annals of Pure and Applied Logic}, 166(7-8):741--754,
  July-August 2015.

\bibitem{arxiv-myself}
Will Johnson.
\newblock On dp-minimal fields.
\newblock {arXiv:1507.02745v1 [math.LO]}, July 2015.

\bibitem{prdf}
Will Johnson.
\newblock Partial results on dp-finite fields.
\newblock {arXiv:1903.11322v1 [math.LO]}, March 2019.

\bibitem{dp-add}
Itay Kaplan, Alf Onshuus, and Alexander Usvyatsov.
\newblock Additivity of the dp-rank.
\newblock {\em Trans. Amer. Math. Soc.}, 365(11):5783--5804, November 2013.

\bibitem{PZ}
Alexander Prestel and Martin Ziegler.
\newblock Model theoretic methods in the theory of topological fields.
\newblock {\em Journal f{\"u}r die reine und angewandte Mathematik}, pages
  318--341, 1978.

\bibitem{sinclair}
Peter Sinclair.
\newblock {\em Relationships between model theory and valuations on fields}.
\newblock PhD thesis, McMaster University, 2018.

\end{thebibliography}

\end{document}